\newtheorem{theorem}{Theorem}[section]
\newtheorem{lemma}[theorem]{Lemma}
\newtheorem{proposition}[theorem]{Proposition}
\newtheorem{Hypotheses}{Hypotheses}
\theoremstyle{definition}
\newtheorem{definition}[theorem]{Definition}
\newtheorem{remark}[theorem]{Remark}
\numberwithin{equation}{section}
\newcommand{\diam}{\mathrm{diam}}      
\renewcommand{\Im}{{\ensuremath{\mathrm{Im\,}}}} 
\renewcommand{\Re}{{\ensuremath{\mathrm{Re\,}}}} 
\newcommand\restr[2]{{
  \left.\kern-\nulldelimiterspace 
  #1 
  \vphantom{\big|} 
  \right|_{#2} 
  }}
\title[PA-imaging using resonating nano-particles]{Mathematical Analysis of the Photo-acoustic imaging modality using resonating dielectric nano-particles: The $2D$ TM-model}
\author[Ghandriche and Sini]{Ahcene Ghandriche  $^*$ and Mourad Sini$^{\ddag}$}
\thanks{$^*$ Altenbergerstrasse 69, A-4040, Linz, Austria. Email: ahcene.ghandriche@ricam.oeaw.ac.at. This author is supported by the Austrian Science Fund (FWF): P 30756-NBL.}
\thanks{$^{\ddag}$ RICAM, Austrian Academy of Sciences, Altenbergerstrasse 69, A-4040, Linz, Austria. Email: mourad.sini@oeaw.ac.at. This author is partially supported by the Austrian Science Fund (FWF): P 30756-NBL}
\begin{document}

\date{\today}

\allowdisplaybreaks

\begin{abstract} 
We deal with the photoacoustic imaging modality using dielectric nanoparticles as contrast agents. Exciting the heterogeneous tissue, localized in a bounded domain $\Omega$, with an electromagnetic wave, 
at a given incident frequency, creates heat in its surrounding which in turn generates an acoustic pressure wave (or fluctuations). The acoustic pressure can be measured in the accessible region $\partial \Omega$ surrounding 
the tissue of interest. The goal is then to extract information about the optical properties (i.e. the permittivity and conductivity) of this tissue from these measurements. We describe two scenarios. 
In the first one, we inject single nanoparticles while in the second one we inject couples of closely spaced nanoparticles (i.e. dimers). 
From the acoustic pressure measured, before and after injecting the nanaparticles (for each scenario), at two single points $x_{1}$ and $x_{2}$ of $\partial \Omega$ and two single times $t_{1} \neq t_{2}$ such that $t_{1,2} >  \diam (\Omega)$,
\bigskip

\begin{enumerate}

\item	we locatize the center point $z$ of the single  nanoparticles and reconstruct the phaseless total field $\vert u_0\vert$ on 
that point $z$ (where $u_0$ is the total field in the absence of
the nanoparticles).  Hence, we transform the photoacoustic problem into the 
inversion of phaseless internal electric fields. 

\item	we localize the centers  $z_1$ and $z_2$  of the injected dimers 
and reconstruct both the permittivity and the conductivity of the tissue on those points. 
\end{enumerate}

This can be done using {\it{dielectric}} nanoparticles enjoying high contrasts of both its electric permittivity and conductivity.

\bigskip

These results are possible using frequencies of incidence close to the resonances of the used dielectric nanoparticles. These particular frequencies are computable. This allows us to solve the photoacoustic
inverse problem with direct approximation formulas linking the measured pressure to the optical properties of the tissue.  The error of approximations are given in terms of the scales and the contrasts 
of the dielectric nanoparticles. The results are justified in the $2$D TM-model.

\end{abstract}

\subjclass[2010]{35R30, 35C20}
\keywords{photo-acoustic imaging, nanoparticles, dielectric resonances, inverse problems.}

\maketitle

\section{Introduction and statement of the results}
\subsection{Motivation and the mathematical models}
Imaging using small scaled contrast agents has known in the recent years a considerable attention, see for instance \cite{B-B:2011,Chen-Craddock-Kosmas, Shea-Kosmas-VanVeen-Hagness}. 
To motivate it, let us recall that conventional 
imaging techniques, as microwave imaging, are known to be potentially capable of extracting features in breast cancer, for instance, in case of the relatively high contrast of
the permittivity, and conductivity, between healthy tissues and malignant ones, \cite{F-M-S:2003}. However, it is observed that in case of benign tissue, the variation of the permittivity is quite low so that such conventional
imaging modalities are limited to be used for early detection of such diseases. 
In these cases, creating such missing contrast is highly desirable. One way to do it is to use micro or nano scaled particles as contrast agents, \cite{B-B:2011,Chen-Craddock-Kosmas}. 
There are several imaging modalities using contrast agents as acoustic imaging using gas microbubbles, optical imaging and photoacoustic using dielectric or magnetic nanoparticles \cite{B-B:2011, F-M-S:2003, Q-C-F-2009}. 
The first two modalities are single wave based methods. In this work, we deal with the last imaging modality. 
\bigskip

Photoacoustic imaging is a hybrid imaging method which is based on coupling electromagnetic waves with acoustic waves to achieve 
high-resolution imaging of optical properties of biological tissues, \cite{P-P-B:2015, L-C:2015}. Precisely, exciting the heterogeneous tissue with an electromagnetic wave, at a certain
frequency related to the used small scale particles, creates heat in its surrounding which in turn generates an acoustic pressure wave (or fluctuations). The acoustic wave can be measured in a region surrounding the tissue of interest.
The goal is then to extract information about the optical properties of this tissue from these measurements, \cite{P-P-B:2015, L-C:2015}.
\bigskip

A main reason why such a modality is promising is that injecting nanoparticles, see \cite{B-B:2011, Chen-Craddock-Kosmas} for information on its feasibility, with appropriate scales between their sizes and optical properties, in the targeted tissue will create localized contrasts in the tissue and hence 
amplify the local electromagnetic energy around its location. This amplification can be more pronounced if the used incident electromagnetic wave is sent at frequencies close to resonances. 
In particular, dielectric or magnetic nanoparticles (as gold nanoparticles \cite{L-C:2015}) can exhibit such resonances when its inner electric permittivity or magnetic permeability is tuned appropriately, see below. 
 Our target here is to mathematically analyze this imaging technique when injecting such nanoparticles. 
 \bigskip
 
 To give more insight to this, let us briefly recall the photoacoustic model, 
see \cite{C-A-B:2007, Triki-Vauthrin:2017 , K-K:2010, N-S:2014, S:2010, S-U:2009} for extensive studies and different motivations of this model and related topics. 
We assume the time harmonic (TM) approximation for the electromagnetic model \footnote{Here, we describe the photoacoustic model assuming the TM-approximation of the electromagnetic field. 
The more realistic model is of course the full Maxwell system.}, then the third component of the electric field, that we denote by $u$, satisfies
\begin{equation}\label{helmotzequa}
\Delta u  +\omega^2 \varepsilon \mu_0 u=0,\; \mbox{ in } \mathbb{R}^2
\end{equation}
with $u:=u^i+u^s$ where $u^i:=u^i(x,d, \omega):=e^{i \omega d\cdot x}$, is the incident plane wave, sent at a frequency $\omega$ and direction $d,\; \vert d\vert=1$, and $u^s:=u^s(x, \omega)$ is the corresponding scattered wave
selected according to the outgoing Sommerfeld radiation conditions (S.R.C) at infinity. Here, $\mu_0$ is the magnetic permeability of the vacuum, which we assume to be a positive real constant, and $\epsilon:=\epsilon (x)$ is defined as
\begin{equation}\label{defvareps}
 \varepsilon(x):=\left \{
\begin{array}{llrr}
\epsilon_0, & in \quad   \mathbb{R}^2 \setminus \Omega,\\
\epsilon(x) & in \quad \Omega \setminus  \overset{M}{\underset{m=1}{\cup}} D_{m}, \\
\epsilon_m , & in \quad  D_m,
\end{array} \right.
\end{equation}
where $\epsilon_0$ is the positive constant permittivity of the vacuum and $\epsilon:=\epsilon_r +i\frac{\sigma_{\Omega}}{\omega}$ with $\epsilon_r$ as the permittivity  and $\sigma_{\Omega}$ the condutivity of 
the heterogeneous tissue (i.e. variable functions). The quantity $\epsilon_m$ is the permittivity constant of the particle $D_m$, of radius $a<<1$, which is taken to be complex valued, 
i.e.\;\, $\epsilon_m:=\epsilon_{m,r} +i\frac{\sigma_m}{\omega}$ where $\epsilon_{m,r}$ is its actual electric permittivity and $\sigma_m$ its conductivity. The bounded domain $\Omega$ models the region of the tissue of interest. We take the nanoparticle of dielectric type, meaning that  
$\frac{\epsilon_m}{\epsilon_0}>>1$ when $a <<1$, and hence its relative speed of propagation
is very large as well. Under particular rates of the ratio $\frac{\epsilon_m}{\epsilon_0}>>1$, resonances can occur, as the dielectric (or Mie-electric) resonances. These regimes will be of particular interest to us. 
Here, we take the $D_{m}$'s of the form $D_{m}:= z_{m} + a \, B_{m}$ where $z_{m}$ models its location, $a$ its radius and $B_{m}$ as a smooth domain of radius $1$ containing the origin. 

\bigskip

As said above, exciting the tissue with such electromagnetic waves will generate a heat $T$ which in turn generates acoustic pressure. Under some appropriate conditions, see \cite{Habib-lectures:2017, Triki-Vauthrin:2017} for instance, 
this process is modeled by the following system:

\begin{equation*}\label{Photoacoustic-general-model}
\left \{
\begin{array}{llrr}
\rho_0 c_p\frac{\partial T}{\partial t}-\nabla \cdot \kappa \nabla T=\omega \Im(\epsilon)\vert u \vert^2 \delta_{0}(t),\\
\\
\frac{1}{c^2}\frac{\partial^2 p}{\partial t^2}-\Delta p= \rho_0 \beta_0\frac{\partial^2 T}{\partial t^2} 
\end{array} \right.
\end{equation*}
 where $\rho_0$ is the mass density, $c_p$ the heat capacity, $\kappa$ is the heat conductivity, $c$ is the wave speed 
and $\beta_0$ the thermal expansion coefficient. 
To these two equations, we supplement the homogeneous initial conditions: 
\begin{equation*}
 T=p=\frac{\partial p}{\partial t}=0, \mbox{ at } t=0.
\end{equation*}
Under additional assumptions on the smallness of the heat conductivity $\kappa$, one can neglect the term $\nabla \cdot \kappa \nabla T$ and hence, we end up with the photoacoustic model linking the electromagnetic field to the acoustic pressure \footnote{We stated the model in the whole plan $\mathbb{R}^2$. However, we could also state it in a bounded domain supplemented with Dirichlet 
or Neumann boundary conditions.}:
\begin{equation}\label{Waveequa}
 \left \{
\begin{array}{llrr}
\frac{\partial^2 p}{\partial t^2} - c_{s}^2 \Delta p=0, & in \quad   \mathbb{R}^2 \times \mathbb{R}_+,\\
p(x, 0)= \frac{\omega\beta_0}{c_p} \Im(\varepsilon)\vert u \vert^2& in \quad \mathbb{R}^2, \\
\frac{\partial p}{\partial t}(x, 0)=0, & in \quad  \mathbb{R}^2.
\end{array} \right. 
\end{equation}
\bigskip

The imaging problem we wish to focus on is stated in the following terms:
\bigskip

{\bf{Problem}}. Reconstruct the coefficient $\epsilon$ from the given pressure $p(x, t)$ measured for $(x, t) \in \partial \Omega \times (0, T)$, with some positive time length $T$,
\bigskip

\begin{enumerate}
 \item after injecting single nanoparticles located in a sample of points in $\Omega$,
 \bigskip
 
 or/and
 \bigskip
 
 \item after injecting couples of nanoparticles two by two closely spaced (i.e. dimers) and located in a sample of points in  $\Omega$.
\end{enumerate}

\bigskip

It is natural to split this problem into two steps. The first step concerns the acoustic inversion, namely to reconstruct the source term $\Im(\varepsilon)\vert u\vert^2, \;~ x \in \Omega,$
from the pressure  $p(x, t)$ for $(x, t) \in \partial \Omega \times (0, L)$. The second step concerns the electromagnetic inversion, namely to reconstruct $\epsilon$ from the internal data  
$\Im(\varepsilon)\vert u\vert^2$.

\subsection{The acoustic inversion}\label{photo-acoustic-inversion-known-results}

We start by recalling the main results related to the model $(\ref{Waveequa})$. More informations about this part can be found in \cite{!PATTAT} and \cite{KuchmentKunyansky}.

For this inversion, there are two cases to distinguish: 
\begin{enumerate}
\item[\underline{Case 1:}] The speed of propagation $c_{s}$ is constant everywhere in $\mathbb{R}^{2}$ and $\Omega$ is a disc.\\ 
The solution of the problem $(\ref{Waveequa})$ is given by the Poisson formula
\begin{equation}\label{pressursol}
p(x,t) = \frac{\omega \, \beta_{0}}{2\pi c_{s} c_{p}} \partial_{t} \Bigg( \int_{\vert x-y \vert < c_{s} t} \frac{\Im(\varepsilon)(y) \, \vert u \vert^{2}(y)}{\sqrt{c^{2}_{s}t^{2}-\vert x-y \vert^{2}}} dy\Bigg).
\end{equation}
We denote by $M(f)$ the circular means of $f$ 
\begin{equation*}
 M(f)(x,r) := \frac{1}{2\pi} \int_{\vert \xi \vert=1} f(x+r\xi) \, d\sigma(\xi). 
\end{equation*}
The equation $(\ref{pressursol})$ takes the following form 
\begin{equation*}
p(x,t) = \frac{\omega \, \beta_{0}}{c_{s} c_{p}} \partial_{t} \Bigg( \int_{0}^{c_{s}t} \frac{r}{\sqrt{c^{2}_{s}t^{2}-r^{2}}} M(   \Im(\varepsilon) \, \vert u \vert^{2})(x,r) dr \Bigg).
\end{equation*}
The recovery of $Im(\varepsilon) \, \vert u \vert^{2}$ from $p(x,t), (x,t) \in \partial \Omega \times [0,T]$, is done in two steps. First,  
as $\partial \Omega$ is a circle, the circular means can be recovered from the pressure as follows 
\begin{equation}\label{Abelequa}
M(Im(\varepsilon) \, \vert u \vert^{2})(x,r) = \frac{2 \omega \beta_{0}}{c_{p} \pi} \int_{0}^{c_{s}r} \frac{p(x,t)}{\sqrt{r^{2}-t^{2}}} dt, \quad x \in \partial \Omega.
\end{equation}
Second, if $ Im(\varepsilon) \, \vert u \vert^{2} \in C^{\infty}(\mathbb{R}^{2})$  with $supp(Im(\varepsilon) \, \vert u \vert^{2})\subset \overline{\Omega}$, then, for $x \in \Omega$, 
\begin{equation}\label{N}
Im(\varepsilon)(x) \, \vert u \vert^{2}(x) = \frac{1}{2 \pi R_{0}}  \int_{\partial \Omega} \int_{0}^{2R_{0}} (\partial_{r} \, r \, \partial_{r} M(Im(\varepsilon) \, \vert u \vert^{2}))(p,r) \, \log(\vert r^{2} - \vert x-p \vert^{2} \vert) \, dr \, d\sigma(p). 
\end{equation} 

We can find in \cite{Natterer} and \cite{FHR}  the justification of $(\ref{Abelequa})$ and $(\ref{N})$ respectively.

\bigskip

\item[\underline{Case 2:}] The speed of propagation is variable in $\Omega$ and constant in $\mathbb{R}^{2}\setminus\Omega$, with $\Omega$ not necessarily a disc. However, the following assumptions are needed, namely 
(1). $Supp(\Im(\varepsilon) \, \vert u \vert^{2})$ is compact in $\Omega$, (2). $c(x) > c > 0$ and $Supp(c(x)-1)$ is compact in $\Omega$ and (3). the non trapping condition is verified. In $\mathbb{L}^{2}(\Omega; c^{-2}_{s}(x)dx)$, we consider the operator given by the differential expression $A = -c^{-2}_{s}(x) \Delta$ and the Dirichlet boundary condition on $\partial \Omega$. 
This operator is positive self-adjoint operator, and has discrete spectrum $\lbrace s^{2}_{k} \rbrace_{k \geq 1}$ with a basis set of eigenfunctions 
$ \lbrace \psi_{k} \rbrace_{k \geq 1}$ in $\mathbb{L}^{2}(\Omega; c^{-2}_{s}(x)dx)$. Then, the function $\Im(\varepsilon)(x) \, \vert u \vert^{2}(x)$ can be reconstructed inside $\Omega$ from the data $p$, 
as the following $\mathbb{L}^{2}(\Omega)$ convergent series 
\begin{equation*}
\Im(\varepsilon)(x) \, \vert u \vert^{2}(x) = \frac{c_{p}}{\omega \, \beta_{0}} \, \sum_{k} (\Im(\varepsilon)(x) \, \vert u \vert^{2})_{k} \psi_{k}(x),
\end{equation*}
where the Fourier coefficients $(\Im(\varepsilon)(x) \, \vert u \vert^{2})_{k}$ can be recovered as: 
\begin{equation*}
(\Im(\varepsilon)(x) \, \vert u \vert^{2})_{k} = s^{-2}_{k} p_{k}(0) - s^{-3}_{k} \int_{0}^{\infty} \sin(s_{k}t) \, p^{\prime \prime}_{k}(t) dt,
\end{equation*}
with  
\begin{equation*}
p_{k}(t) := \int_{\partial \Omega} p(x,t)  \frac{\partial \overline{\psi_{k}}}{\partial \nu}(x) dx.
\end{equation*}
\end{enumerate} 
More details can be found in \cite{!PATTAT}.
\bigskip

In our work, we address the following two situations regarding the types of the used dielectric nanoparticles.
\bigskip

\begin{enumerate}

\item {\it{Only the permittivity $\epsilon_{m,r}$ of the nanoparticle is contrasting.}} For this case, we use the results above on the acoustic inversion to obtain $\Im(\varepsilon)(x) \, \vert u \vert^{2}(x), x\in \Omega$ 
and hence $\vert u \vert, x\in D_m$, as $\Im \varepsilon=\frac{\sigma_{m}}{\omega}$ on $D_m$ which is known. With this information, we perform the electromagnetic inversion to reconstruct $\epsilon_{r}$ and $\sigma_{\Omega}$. 

\bigskip

\item {\it{Both the permittivity $\epsilon_{m,r}$ and the conductivity $\sigma_m$ of the nanoparticle are contrasting.}} In this case, we do not rely on the acoustic inversion results above.
Instead, we propose direct approximating formulas to link the measured data $p(x, t)$ for $x\in \partial \Omega$ and $t\in (0, T)$, to $\vert u \vert(x)$, $x \in D_{m}$. 
Actually, we need only to measure $p(x, t)$ on two single points on $\partial \Omega$ for two distinct times $t_{1}$ and $t_{2}$. Then, we perform the electromagnetic inversion.

\end{enumerate}

\bigskip
 
\subsection{The electromagnetic inversion and motivation of using nearly resonant incident frequencies}\label{electromagnetic-inversion}

We start from the model
\begin{equation}\label{prblm}
\left\{%
\begin{array}{lll}
    (\Delta + \omega^2 n^2) u = 0  & in & \mathbb{R}^{2}\\
    u := u^{i} + u^{s} & and  & u^{s} \quad S.R.C 
    \end{array}%
\right.
\end{equation}
where, taking $M=1$ in ($\ref{defvareps}$), 
\begin{equation*}
n := \left\{%
\begin{array}{llll}
    n_{p}& = \sqrt{\epsilon_{p} \mu_{0}} & in & D\\
    n_{0}& = \sqrt{\epsilon_{0} \mu_{0}} & in & \mathbb{R}^{2}  \setminus D.
\end{array}%
\right.
\end{equation*}
We set \,
$\varepsilon_{p}-\varepsilon_{0} = \tau,~~ \tau >> 1.$ 
Then, we obtain
\begin{equation*}
n^{2}-n^{2}_{0} = \left\{%
\begin{array}{llll}
    \mu_{0} \, \tau & in & D\\
    0 & in & \mathbb{R}^{2}  \setminus D.
    \end{array}%
\right.
\end{equation*}
We call the dielectric (or \emph{Mie-electric}) resonances the possible eigenvalues of $(\ref{prblm})$, 
i.e. the possible solutions $(\omega,u^{s})$ of $(\ref{prblm})$ when $u^{i}=0$. 
It is known from the scattering theory, precisely Rellich's lemma, that those eigenvalues belong to the lower complex plane $\mathbb{C}_{-}$.
However, as $\tau >>1$, and $a<<1$, their imaginary parts tend to zero, see \cite{A-D-F-M-S:2019} for instance.
Using the Lippmann-Schwinger equation (L.S.E), such eigenvalues are also characterized by the equation
\begin{equation}\label{defLSE}
u(x) = - \omega^{2} \int_{D} (n_{0}^{2}-n^{2}) G_{k}(x,y) \, u(y) dy, \quad x \in \mathbb{R}^{2},
\end{equation}
where $G_k$ is the Green's function satisfying $(\Delta + \omega^2 n^2) G_k =-\delta$ with the S.R.C, and $k:=\omega n$ is the wave number.
As $\epsilon_{p}$ is constant in $D$, and assuming $\epsilon$ to be constant in $\Omega$ for simplicity of the exposition here, we get from $(\ref{defLSE})$ 
\begin{equation}\label{spe}
u(x) \frac{1}{\omega^{2} \mu_{0} \tau} =  \int_{D} G_{k}(x,y) u(y) dy, \quad x \in \mathbb{R}^{2}.
\end{equation}
To solve $(\ref{spe})$, it is enough to find and compute eigenvalues $w_{n}(k)$ of the volumetric potential operator $A_{k}$ defined as 
\begin{equation}\label{New}
A_{k}(u)(x) :=  \int_{D} \Phi_{k}(x,y) u(y) dy,~ u \in \mathbb{L}^2(D). 
\end{equation} 
Then combining ($\ref{spe}$) and ($\ref{New}$), we can write $
A_{k}(u) = \frac{1}{\omega^{2} \mu_{0} \tau} \; u$
and then solve in $\omega$, and recalling that $k=\omega\; n$, the dispersion equation 
\begin{equation}\label{L}
w_{n}(k) = \frac{1}{\omega^{2}\mu_{0}\tau}.
\end{equation}
Let us now recall that the operator $LP$, called the Logarithmic Potential operator, defined by
\begin{equation*}
LP(u)(\eta) :=  \int_{B} -\frac{1}{2 \pi} \log\vert \eta - \xi \vert \; u(\xi) \; d\xi, u \in \mathbb{L}^2(B), \eta \in B, 
\end{equation*}
has a countable sequence of eigenvalues with the corresponding eigenfunctions as a basis of $\mathbb{L}^2(B)$. For more details see \cite{RG} and \cite{AKL}.
Correspondingly, we define $A_{0}$ to be 
\begin{equation}\label{U}
A_{0}(u)(x) :=  \int_{D} -\frac{1}{2 \pi} \log\vert x - y \vert \; u(y) \; dy,  u \in \mathbb{L}^2(D), x \in D.
\end{equation}

Rescaling, we have
$ A_{0}(u)(x)=a^2 LP \tilde{u} (\xi)-\frac{a^2\log(a)}{2\pi}\int_B \tilde{u}(\xi)d\xi,~~ \xi:=\frac{x-z}{a}.$
Hence the eigenvalue problem $A_0(u)=\lambda_n u$, on $D$, becomes
\begin{equation*}
 LP \tilde{u} -\frac{\log(a)}{2\pi}\int_B \tilde{u}(\eta)d\eta=\frac{\lambda_n}{a^2}\tilde{u}, ~ \mbox{ on } B.
\end{equation*}

We observe that the spectrum $Spect(A_0\lvert_{\mathbb{L}_{0}^2(D)})$ of $A_0$, restricted to $\mathbb{L}_0^2(D):=\{v \in \mathbb{L}^2(D),~ \int_{D}v(x)~dx=0\}$, is characterized by 
$Spect(A_0\lvert_{\mathbb{L}_0^2(D)})
=a^{-2} Spect(LP\lvert_{\mathbb{L}_0^2(B)})$. However, as we see it later, the important eigenvalues are those for which the corresponding eigenfunctions are not average-zero.
Therefore, we need to handle the other part of the spectrum of $A_0$ as well. As $\mathbb{L}_0^2(D)$ is not invariant under the action of $A_0$, the natural decomposition $\mathbb{L}^2(D)= \mathbb{L}_0^2(D) \oplus 1$ does not decompose it.
\bigskip

The following properties are needed in the sequel and we state them as hypotheses to keep a higher generality.

\begin{Hypotheses}\label{hyp}
The particles $D$, of radius $a,\; a<<1$, are taken such that the spectral problems $A_0 u =\lambda\; u, \mbox{ in } D$, have eigenvalues $\lambda_n$ and corresponding eigenfunctions, $e_n$, satisfying the following properties:
\newline

\begin{enumerate}
 \item $\int_D e_n(x) dx \neq 0, \; \forall a <<1.$
 \bigskip
 
 \item $\lambda_n \sim a^2 \vert \log(a)\vert, \; \forall a <<1.$ 
\end{enumerate}
\end{Hypotheses}

In the appendix, see section \ref{the hypotheses-justification}, we show that for particles of general shapes, the first eigenvalue and the corresponding eigenfunction satisfy {\bf{Hypotheses}} \ref{hyp}. 
In addition, we characterize the properties of the eigenvalues for the case when $D$ is a disc.
\bigskip

Since, the dominant part of the operator $A_{k}$ defined in $(\ref{New})$ is $A_{0}$,   
we can write\footnote{More exactly, using the expansion and the scales of the fundamental solution, we show that an eigenvalue of $A_{k}$ can be written as
\begin{equation*}
a^{2} \, \Big( {\tilde{\lambda}_{n}} + \frac{1}{2} \vert \log(a) \vert (\int_{B}\bar{e_n}(\xi) d\xi)^2 - \frac{1}{2} \log(k)(z) + \pi \, \Gamma \Big) + \mathcal{O}(a^{3}).
\end{equation*}
}
\begin{equation}\label{B1}
w_{n}(k) = \lambda_{n} + \mathcal{O}(a^{2}).
\end{equation}

Combining $(\ref{B1})$, $(\ref{T})$ and $(\ref{L})$, we get
${\lambda_{n}} = \displaystyle\frac{1}{\omega^{2}\, \mu_{0} \, \tau \, a^{2}} + \mathcal{O}(1)$
or 
$\omega^{2} = \displaystyle\frac{1}{\mu_{0} \, \tau \, \lambda_{n}} + \mathcal{O}(\vert \log(a) \vert^{-1}).$
This means that $(\ref{prblm})$ has a sequence of eigenvalues that can be approximated by 
\begin{equation*}
\frac{1}{\mu_{0} \, \tau \, \lambda_{n}} + \mathcal{O}(\vert \log(a) \vert^{-1}).
\end{equation*}
The dominating term is finite if the contrast of the used nanoparticle's permittivity behaves as $\tau \sim \lambda^{-1}_n \sim a^{-2} \vert \log(a)\vert^{-1} $ for $a<<1$.

\bigskip

We distinguish two cases as related to our imaging problem.

\begin{enumerate}

\item Injecting one nanoparticle and then sending incident plane waves at real frequencies $\omega$ close to the real values
\begin{equation}\label{Number}
\omega_{n} := \left( \mu_{0} \, \tau \, \lambda_{n} \right)^{-1/2} ,
\end{equation}
we can excite, approximately, the sequence of eigenvalues described above. As a consequence, see the justification later, if we excite with incident frequencies near 
$w_{n}, n \in \mathbb{N}$, the total field $u$ solution of $(\ref{prblm})$, restricted to $D$ will be dominated by 
$\int_{D} u_0(x) \, e_{n}(x) \, dx \,~ e_{n}(x),$
which is, in turn, dominated by 
$u_0(z) \, e_{n}(x) \, \int_{D}  e_{n}(x) \, dx$
where $u_0$ is the wave field in the absence of the nanoparticles, i.e.
$(\Delta + \omega^2 n_0^2) u_0 =0, \;~ u_0=u^{i}+u^s_0$
and $u_0^s$ satisfies the S.R.C. Hence from the acoustic inversion, i.e. from the knowledge of $\Im(\epsilon)(x)\vert u \vert(x), x\in \Omega$, 
and hence $\vert u\vert(x), x\in D$, as, for $x\in D$, $\Im(\epsilon)=\Im(\epsilon_p)=\frac{\sigma_p}{\omega}$ is known, 
we can reconstruct
\begin{equation*}
\big\vert u_0(z) \big\vert \, \big\vert e_{n}(z) \big\vert \, \Bigg\vert  \int_{D}  e_{n}(x) \, dx \Bigg\vert.
\end{equation*}
\bigskip

As $e_{n}$ and $D$ are in principle known, then we can recover the total field $\vert u_0(z) \vert$.  
Taking a sampling of points $z$ in $\Omega$, we get at hand the phaseless internal total field $\vert u_0(z) \vert$, $z \in \Omega$.
\bigskip

\item Now, we inject a dimer, meaning a couple of close nanoparticles, instead of only single particles, with prescribed high contrasts of the relative permittivity or/and conductivity. 
Sending incident plane wave at frequencies close to the dielectric resonances, 
we recover also the amplitude of the field generated by the first interaction of 
the two nano-particles. 
Indeed, based on point-approximation expansions, this field can be approximated by the Foldy-Lax field. This field describes the one due to multiple interactions between the nanoparticles. 
We show that the acoustic inversion approximately reconstruct the first multiple interaction field (i.e. the Neumann series cut at the first, and not the zero, order term). 
From this last field, we recover the value of $\vert \varepsilon_{0}(z) \vert, z \in \Omega$.\\

\end{enumerate}

Both steps are justified using the incident frequencies close to the dielectric resonance of the nanoparticles. This wouldn't be possible using incident frequencies away from these resonances.
\bigskip

Recall that $\displaystyle\epsilon_0=\epsilon_{r} +i \frac{\sigma_{\Omega}}{\omega}$, then $\displaystyle\vert \epsilon_{0} \vert^2=\epsilon_{r}^2 + \frac{\sigma_{\Omega}^2}{\omega^2}$. Hence using two different dielectric resonances, we can reconstruct both the permittivity $\epsilon_r$ and the conductivity $\sigma_{\Omega}$.

\subsection{Statement of the results}
We recall that the mathematical model of the photoacoustic imaging modality is $(\ref{helmotzequa}), (\ref{defvareps})$ and $(\ref{Waveequa})$. \\

Next, we set $u:=u_j, j=0, 1, 2$, the solution of $(\ref{helmotzequa})$ and $(\ref{defvareps})$ when there is no nanoparticle injected, there is one or two nanoparticles, respectively (i.e take $M=0,1$ or $2$ in $(\ref{defvareps})$).\\
To keep the technicality at the minimum, we deal only with the case when the electromagnetic properties of the injected particles are the same i.e, $\epsilon_{1} = \cdots = \epsilon_{M}$. 

\subsubsection{Imaging using dielectric nanoparticles with permittivity contrast only}\label{only-contrasted-permittivity}
Let the permittivity $\epsilon$, of the medium, be $W^{1, \infty}-$smooth in $\Omega$ and the permeability $\mu_0$ to be constant and positive.
Let also the injected nanoparticles $D$ satisfy {\bf{Hypotheses}} \ref{hyp}. We assume these nanoparticles to be characterized with moderate magnetic permeability and their permittiviy and conductivity are 
such that $\epsilon_{m,r} \sim a^{-2} \, \vert \log(a) \vert^{-1}$ while $\sigma_{m} \sim 1$ as $a << 1$.  The frequency of the incidence $\omega$ is chosen close to the dielectric resonance $\omega_{n_0}$
\begin{equation*}\label{resoance-n-0}
\omega^2_{n_0}:= \left( \mu_0 \, \tau \, \lambda_{n_0} \right)^{-1},
\end{equation*}
as follows
\begin{equation*}\label{resoance-n-0}
\omega^2=\omega^2_{n_0}(1\pm \vert \log(a)\vert^{-h}),\;~~ 0<h<1.\;\, \footnote{Choosing + or - does not make a difference for the results in Theorem \ref{Using-only-permittivity-contrast}.}
\end{equation*}

\begin{theorem}\label{Using-only-permittivity-contrast}
We assume that the acoustic inversion is already performed using one of the methods given in section \ref{photo-acoustic-inversion-known-results}. Hence, we have at hands
$$
\vert u_j\vert(x), x \in D, j=1, 2.
$$

\begin{enumerate}
\item {\it{Injecting one nanoparticle.}} In this case, we use the data $\vert u_1\vert(x), x \in D$. We have the following approximation
\begin{equation}\label{one-particle-reconst-permittivity-only}
\int_D\vert u_1\vert^2(x) dx =\frac{\vert u_0\vert^2(z) (\int_D e_{n_0}(x) dx)^2}{\vert 1-\omega^2 \mu_0 \tau \lambda_{n_0}\vert^2} + \mathcal{O}\big( a^{2} \big).
\end{equation}
\bigskip

\item {\it{Injecting two closely spaced nanopartilces.}} These two particles are distant to each other as 
$$
\vert z_1-z_2\vert \geq exp(-\vert \log(a)\vert^{1-h}), a<<1,
$$
where $z_1$ and $z_2$ are the location points of the particles. In this case, we use as data $\vert u_j\vert(x), x \in D$ $j=1, 2$, where $D$ is any one of the two particles. The following expansion is valid

\begin{equation}\label{reconstruction-k-using-contras-permittivity-only}
\log (\vert k\vert)(z)=2\pi \gamma -
\frac{A_1-(1-C\Phi_0)^2}{A_1-2(1-C \Phi_0)}+O(\vert \log(a)\vert^{\max\{h-1, 1-2h\}}),
\end{equation}
where $\gamma$ is the Euler constant, 
\begin{equation*}
 A_1:=\frac{\int_D\vert u_2\vert^2(x) dx}{\int_D\vert u_1\vert^2(x) dx},\;~~ \Phi_0:=\frac{-1}{2\pi} \log \vert z_1-z_2\vert 
\end{equation*}
 and 
\begin{equation*}
\textbf{C}:=\int_D[\frac{1}{\omega^2 \mu_0 \tau}I-A_0]^{-1}(1)(x)dx=\frac{\omega^2 \mu_0 \tau}{1-\omega^2 \mu_0 \tau \lambda_{n_0}}\Bigg(\int_D e_{n_0}(x) dx\Bigg)^2 +O(\vert \log(a) \vert^{-1}).
\end{equation*}

\end{enumerate}

\end{theorem}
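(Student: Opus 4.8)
The plan is to reduce both statements to the near-resonant spectral analysis of the logarithmic-potential operator $A_0$. I would start from the Lippmann--Schwinger equation for the interior field. Since $\epsilon_p-\epsilon\approx\tau$ with $\tau\sim a^{-2}\abs{\log a}^{-1}$, the field on a single particle $D$ satisfies $\big[\tfrac{1}{\omega^2\mu_0\tau}I-A_k\big]u_1=\tfrac{1}{\omega^2\mu_0\tau}\,u_0$ on $D$, with $A_k(u)(x)=\int_D G_k(x,y)u(y)\,dy$. Using $G_k(x,y)=-\tfrac{1}{2\pi}\log\abs{x-y}+(\text{smooth})$ I replace $A_k$ by $A_0$ at the cost of the $\mathcal{O}(a^2)$ eigenvalue shift recorded in $(\ref{B1})$ and the footnote, and expand in the orthonormal real eigenbasis $\{e_n\}$ of $A_0$. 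Writing $\beta=(\omega^2\mu_0\tau)^{-1}$, this gives $u_1=\beta\sum_n\tfrac{\langle u_0,e_n\rangle}{\beta-\lambda_n}e_n$, hence, by orthonormality, $\int_D\abs{u_1}^2=\sum_n\tfrac{\abs{\langle u_0,e_n\rangle}^2}{\abs{1-\omega^2\mu_0\tau\lambda_n}^2}$.

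At the tuned frequency the detuning is $1-\omega^2\mu_0\tau\lambda_{n_0}=\mp\abs{\log a}^{-h}$, so the $n_0$-term is resonantly amplified; by {\bf{Hypotheses}} $\ref{hyp}(2)$ only $\lambda_{n_0}$ carries the $\abs{\log a}$-enhancement, so for $n\ne n_0$ one has $\omega^2\mu_0\tau\lambda_n\to0$ and the remaining denominators are bounded below. Parseval ($\sum_n\abs{\langle u_0,e_n\rangle}^2=\norm{u_0}^2_{\mathbb{L}^2(D)}=\mathcal{O}(a^2)$) then bounds the non-resonant tail by $\mathcal{O}(a^2)$, while a Taylor expansion of $u_0$ about $z$ gives $\langle u_0,e_{n_0}\rangle=u_0(z)\int_D e_{n_0}+\mathcal{O}(a^2)$. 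Squaring and dividing by $\abs{1-\omega^2\mu_0\tau\lambda_{n_0}}^2$ produces $(\ref{one-particle-reconst-permittivity-only})$; the same resolvent applied to the constant $1$, keeping only the resonant mode, gives the stated expression for $\textbf{C}$ with the $\mathcal{O}(\abs{\log a}^{-1})$ remainder coming from the non-resonant modes.

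For the dimer I would pass to the two-particle Lippmann--Schwinger system and, via the point-approximation, reduce it to a Foldy--Lax system $v_1=u_0(z_1)+\Phi_0^{\mathrm{f}}\,\textbf{C}\,v_2$, $v_2=u_0(z_2)+\Phi_0^{\mathrm{f}}\,\textbf{C}\,v_1$ for the effective exciting fields $v_j$, where $\Phi_0^{\mathrm{f}}:=G_k(z_1,z_2)$ is the full inter-particle propagator. The small-argument expansion of the $2$D fundamental solution, $G_k(z_1,z_2)=-\tfrac{1}{2\pi}\log\abs{z_1-z_2}-\tfrac{1}{2\pi}\log k(z)+\tfrac{1}{2\pi}(\log2-\gamma)+\tfrac{i}{4}+\cdots=\Phi_0+\rho$, is exactly the point where the unknown $\log\abs{k}(z)$ and the Euler constant enter, $\rho$ being the $\mathcal{O}(1)$ correction carrying $\log k(z)$. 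Because the separation is tuned so that $\Phi_0\sim\abs{\log a}^{1-h}$ while $\textbf{C}\sim\abs{\log a}^{h-1}$, the coupling $\textbf{C}\Phi_0=\mathcal{O}(1)$ is \emph{strong}, so I must solve the $2\times2$ system exactly rather than perturbatively; using $u_0(z_1)=u_0(z_2)+\mathcal{O}(e^{-\abs{\log a}^{1-h}})$ this yields a closed form for $v_1/u_0(z)$.

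Since the response on the measured particle is proportional to its exciting field in both experiments, the measured ratio $A_1=\int_D\abs{u_2}^2/\int_D\abs{u_1}^2$ equals $\abs{v_1/u_0(z)}^2$ up to the $\mathcal{O}(a^2)$-type errors already controlled above. Substituting the closed form gives an algebraic relation between $A_1$, the known quantity $s:=1-\textbf{C}\Phi_0$, and the correction $\rho$; isolating $\rho$ from the known $\Phi_0$ and the constant $\tfrac{1}{2\pi}(\log2-\gamma)$ and rearranging produces the stated identity $(\ref{reconstruction-k-using-contras-permittivity-only})$, the Euler constant surfacing as the $2\pi\gamma$ term. The principal obstacle is the quantitative justification of this reduction in the strongly-coupled, near-resonant regime together with the extraction of the \emph{subleading} $\log k(z)$: two corrections must be tracked, the self-interaction shift $w_{n_0}(k)-\lambda_{n_0}\sim a^2\log k$ (relative size $\abs{\log a}^{h-1}$) and the cross-interaction part $\Phi_0^{\mathrm{f}}-\Phi_0$ weighted against the large $\Phi_0$ (relative size $\abs{\log a}^{1-2h}$), whose maximum gives the error exponent $\max\{h-1,1-2h\}$. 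This requires (i) a non-degeneracy bound keeping $1-(\textbf{C}\Phi_0)^2$ and the denominator $A_1-2s$ away from zero, i.e. avoiding a hybridized dimer resonance, and (ii) control of the point-approximation remainder for the field radiated by one particle onto the other at the critical separation $e^{-\abs{\log a}^{1-h}}$, where the logarithmic kernel is most singular; these estimates, combined with the resonance bookkeeping of the single-particle case, are the crux.
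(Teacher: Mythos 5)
Your part (1) and the skeleton of part (2) track the paper's own proof closely: Lippmann--Schwinger equation, projection onto the eigenbasis of $A_0$ with resonant amplification of the $n_0$-mode and a Parseval bound on the non-resonant tail, then a $2\times2$ Foldy--Lax system solved exactly in the strongly coupled regime $\textbf{C}\,\Phi_0=\mathcal{O}(1)$ (sections \ref{motref} and \ref{appendixlemma}). One preliminary point you elide: Foldy--Lax delivers the averages $\int_{D_j}u_2\,dx$, whereas the data are $\int_{D_j}\abs{u_2}^2\,dx$, so identifying your $\abs{v_1/u_0(z)}^2$ with $A_1$ needs the rank-one structure of the field on each particle, i.e. the lemma $\abs{\int_{D_i}v\,dy}^2=a^2\big(\int_B\overline{e}_{n_0}\big)^2\int_{D_i}\abs{v}^2\,dy+\mathcal{O}(a^4\abs{\log a}^{h})$ (formula (\ref{equa56})), which also contributes to the $\max\{h-1,1-2h\}$ error budget. (Minor: for $n\neq n_0$ the correct statement is that $1-\omega^2\mu_0\tau\lambda_n$ is bounded away from zero, not that $\omega^2\mu_0\tau\lambda_n\to0$; Hypotheses \ref{hyp} makes the relevant $\lambda_n$ all scale like $a^2\abs{\log a}$.)

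The genuine gap is the final step, ``isolating $\rho$ \ldots and rearranging.'' The accessible relation is a \emph{single real} equation --- the squared modulus $\abs{\textbf{C}^{-1}-\Phi_0-2\rho}^2=\Psi\,B$ --- while $\rho=\tfrac{-1}{2\pi}\log k+\Gamma$ carries two real unknowns, $\log\abs{k}$ and $\mathrm{Arg}(k)$ (recall $\Gamma=\gamma+i/4$), so $\rho$ cannot be isolated; as written you have one equation in two unknowns. What rescues the argument, and what your sketch never identifies, is a scaling asymmetry: after expanding the squares, $\alpha:=\tfrac{-1}{2\pi}\log\abs{k}+\gamma$ is multiplied by the large real coefficients $\textbf{C}^{-1}-\Phi_0$ and $\textbf{C}^{-1}/A_1$, both of size $\abs{\log a}^{1-h}$, whereas $\mathrm{Arg}(k)$ enters only through the $\mathcal{O}(1)$ quadratic terms $\big(\tfrac{-1}{2\pi}\mathrm{Arg}(k)+\tfrac14\big)^2$, which occur on both sides and are discarded into the error. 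This is also why the single-particle data must enter through the full normalization $\Psi$ of (\ref{vVonelemma}) rather than merely through $\abs{u_0(z)}^2$: the expansion of $\Psi$ supplies the matching $\textbf{C}^{-2}$ and $-2\textbf{C}^{-1}\alpha$ contributions against which the relation becomes \emph{linear} in $\alpha$ with an invertible coefficient, yielding (\ref{Kds}). Without this observation the rearrangement does not close.
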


From the formula (\ref{one-particle-reconst-permittivity-only}), we can derive an estimate of the total field in the absence of the nanoparticles, i.e. $\vert u_0\vert(x), x \in \Omega$, 
by repeating the same experiment scanning the targeted tissue located in $\Omega$ by injecting single nanoparticles. Hence, we transform the photoacoustic inverse problem to the reconstruction of
$\epsilon_0$ in the equation $(\Delta +\omega^2 \mu_0 \epsilon_0)u_0=0,$ in $\mathbb{R}^2$, from the phaseless internal data $\vert u_0\vert(x), x\in \Omega$.
\bigskip

From the formula (\ref{reconstruction-k-using-contras-permittivity-only}), we can reconstruct $\vert k\vert (z)$ using the data $\vert u_1\vert(x)$ and $\vert u_2\vert(x)$, with $x \in D$. 
Indeed,
\begin{equation*}
\vert k\vert (z)=\omega^2 \vert \epsilon_0\vert \mu_0=\omega^2 \Big(\vert \epsilon_{r}\vert^2 +\frac{\vert \sigma_{\Omega} \vert^2}{\omega^2}\Big)^{1/2} \, \mu_{0},
\end{equation*} 
then using two different resonances $\omega_{n_0}$ and $\omega_{n_1}$, we can reconstruct both the permittivity $\epsilon_0(z)$ and the conductivity $\sigma_{\Omega}(z)$.

\subsubsection{Imaging using dielectric nanoparticles with both permittivity and conductivity contrasts}\label{both-contrasted-permittivity-conductivity}
As in section \ref{only-contrasted-permittivity}, let the permittivity $\epsilon$, of the medium, be $W^{1, \infty}-$smooth in $\Omega$ and the permeability $\mu_0$ to be constant and positive.
Let also the injected nanoparticles $D$ satisfy {\bf{Hypotheses}} \ref{hyp}.
Here, we assume that $\epsilon_{m,r} \sim a^{-2} \vert \log(a) \vert^{-1}$
and $\sigma_m\sim a^{-2} \vert \log(a) \vert^{-1-h-s}$, $s \geq 0$.
\, The frequency of the incidence $\omega$ is chosen close to the dielectric resonance $\omega_{n_0}$
\begin{equation*}\label{resoance-n-0}
\omega^2_{n_0} := \left( \mu_0 \, \tau \, \lambda_{n_0} \right)^{-1}
\end{equation*}
as follows
\begin{equation}\label{resoance-n-0-2}
\omega^2=\omega^2_{n_0}(1\pm \vert \log(a)\vert^{-h}),\;~~ 0<h<1.
\end{equation}

\begin{theorem}\label{Using-both-permittivity-and-conductivity-contrasts}

Let $x \in \partial \Omega$ and $t \geq  diam(\Omega)$. We have the following expansions of the pressure:

\begin{enumerate}
\item {\it{Injecting one nanoparticle}}. In this case, we have the expansion

\begin{equation}\label{pressure-to-v_0}
(p^{+} + p^{-} - 2p_{0})(t,x)  =  \frac{-t \, \omega \, \beta_{0}}{c_{p} \, (t^{2}-\vert x-z \vert^{2})^{3/2}}  \frac{2\Im(\tau) \vert u_0(z) \vert^{2}}{\vert 1- \omega^{2} \mu_{0} \lambda_{n_{0}} \tau \vert^{2}}  \Big(\int_{D} e_{n_{0}} dx \Big)^{2} + \mathcal{O}\big(\vert \log(a) \vert^{\max(-1,2h-2)}\big),
\end{equation}
under the condition $0\leq s < \max\{h,\; 1-h\}$, where $p^+$ and $p^-$ correspond to the pressure after injecting the nanoparticles and exciting with frequencies of incidence (\ref{resoance-n-0-2}) while $p_0$ is the pressure in the absence of the nanoparticles.
\bigskip

\item {\it{Injecting two close dielectric nanoparticles.}}
These two particles are distant to each other as 
\begin{equation*}
\vert z_1-z_2\vert \geq exp(-\vert \log(a)\vert^{1-h}), a<<1,
\end{equation*}
where $z_1$ and $z_2$ are the location points of the particles.
We set 
\begin{equation*}\label{pressure-tilde}
\tilde{p}(t, x):=(p^+-p_{0})(t,x) +\frac{1-\omega_{n_0}^2}{1+\omega_{n_0}^2}(p^--p_0)(t,x)
\end{equation*}
then we have the following expansion\footnote{Since $z_{1}$ and $z_{2}$ are sufficiently close, we make in $(\ref{pressure-tilde-expansion})$ an arbitrary choice of one of 
them, i.e. (\ref{pressure-tilde-expansion}) does not distinguish between $z_{1}$ and $z_{2}$.}
\begin{equation}\label{pressure-tilde-expansion}
\tilde{p}(t, x) = \frac{\omega \, \beta_{0}}{c_{p}} \, \frac{-t \, }{(t^{2}-\vert x-z_{2} \vert^{2})^{\frac{3}{2}}} \, \, \frac{4 \; \Im(\tau)}{1+ \omega_{n_0}^{2}} \,   \, \Big(\int_{D} u_2(x)e_{n_{0}}(x) dx \Big)^{2} + \mathcal{O}\big(\vert \log(a) \vert^{\max(-1,2h-2)}\big),
\end{equation}
where $D$ is any one of the two nanoparticles. 
\end{enumerate}
\end{theorem}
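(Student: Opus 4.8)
\emph{Setup and overall approach.} The plan is to read both expansions directly off the Poisson representation (\ref{pressursol}) of the photoacoustic pressure, exploiting that the receiver sits in the far zone and that the source is concentrated. The hypothesis $t \geq \diam(\Omega)$ guarantees that for every $x\in\partial\Omega$ and every $y$ in the support of the source one has $\vert x-y\vert < c_s t$, so the characteristic function in (\ref{pressursol}) is identically $1$ and, since $z$ is interior, the retarded kernel $\big(c_s^2 t^2-\vert x-y\vert^2\big)^{-1/2}$ is smooth near the particle. First I would split the source $\Im(\varepsilon)\vert u\vert^2$ into a background piece on $\Omega\setminus D$ and a particle piece on $D$, on which $\Im(\varepsilon)=\Im(\tau)=\sigma_p/\omega$. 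Subtracting the particle-free pressure $p_0$ then writes $p^\pm-p_0$ as the sum of a genuinely new particle term $\frac{\omega\beta_0}{2\pi c_s c_p}\partial_t\int_D \frac{\Im(\tau)\vert u_1\vert^2(y)}{\sqrt{c_s^2 t^2-\vert x-y\vert^2}}\,dy$ and a background-perturbation term coming from $\Im(\varepsilon)\big(\vert u_1\vert^2-\vert u_0\vert^2\big)$ on $\Omega\setminus D$.

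\emph{Internal field and far-zone kernel.} For the particle term I would import the internal-field resonance expansion, in direct analogy with (\ref{one-particle-reconst-permittivity-only}): writing $u_1$ on $D$ through the Lippmann--Schwinger equation (\ref{defLSE}) and the spectral decomposition of $A_k\approx A_0$ from (\ref{New})--(\ref{B1}), the component along $e_{n_0}$ dominates because at $\omega^2=\omega_{n_0}^2(1\pm\vert\log a\vert^{-h})$ the denominator $1-\omega^2\mu_0\tau\lambda_{n_0}$ is of size $\vert\log a\vert^{-h}$, yielding $\int_D\vert u_1\vert^2=\frac{\vert u_0(z)\vert^2\,(\int_D e_{n_0})^2}{\vert 1-\omega^2\mu_0\tau\lambda_{n_0}\vert^2}+\mathcal{O}(a^2)$; here Hypotheses \ref{hyp} (non-zero average and $\lambda_{n_0}\sim a^2\vert\log a\vert$) are exactly what make $e_{n_0}$, and not an average-zero mode, dominant. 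Since $D=z+aB$, I would Taylor-expand the kernel about $y=z$, replacing $\frac{1}{\sqrt{c_s^2 t^2-\vert x-y\vert^2}}$ by $\frac{1}{\sqrt{c_s^2 t^2-\vert x-z\vert^2}}$ with an $\mathcal{O}(a)$ remainder, pull it out of the integral, and evaluate $\partial_t\frac{1}{\sqrt{c_s^2 t^2-\vert x-z\vert^2}}=-\frac{c_s^2 t}{(c_s^2 t^2-\vert x-z\vert^2)^{3/2}}$. With the normalization $c_s=1$ this reproduces the prefactor $\frac{-t\,\omega\beta_0}{c_p(t^2-\vert x-z\vert^2)^{3/2}}$ of (\ref{pressure-to-v_0}) multiplied by $\Im(\tau)$ and the resonant integral.

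\emph{The combination and the error, the main obstacle.} The decisive, and hardest, step is the combination $p^+ + p^- - 2p_0$ paired with the symmetric frequency choice. At these frequencies the real part of $1-\omega^2\mu_0\tau\lambda_{n_0}$ equals $\mp\vert\log a\vert^{-h}$ while its imaginary part, carrying $\tau_i=\sigma_p/\omega$, is of size $\vert\log a\vert^{-h-s}$; hence $\vert 1-\omega^2\mu_0\tau\lambda_{n_0}\vert^2=\vert\log a\vert^{-2h}\big(1+\mathcal{O}(\vert\log a\vert^{-2s})\big)$ is the same for both signs and the two resonant particle terms reinforce, producing the factor $2\Im(\tau)$. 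The corrections that are linear in the frequency shift $\pm\vert\log a\vert^{-h}$ — notably the $\omega$-variation of $\vert u_0^\pm(z)\vert^2$ and the off-resonant modal contributions — are odd in the sign and cancel upon adding $p^+$ and $p^-$, which is exactly what upgrades the remainder from $\mathcal{O}(\vert\log a\vert^{-h})$ to $\mathcal{O}(\vert\log a\vert^{\max(-1,2h-2)})$. Carrying out this cancellation rigorously while tracking the competing scales $\sigma_p\sim a^{-2}\vert\log a\vert^{-1-h-s}$, $\lambda_{n_0}\sim a^2\vert\log a\vert$ and the radius $a$ (and checking that the background-perturbation term is comparatively of order $a^2$) is the technical heart of the estimate, and it is what pins down the admissible range $0\le s<\max\{h,1-h\}$.

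\emph{The dimer case.} For part (2) the strategy is identical once the single-particle internal field is replaced by its two-particle Foldy--Lax counterpart. I would expand the field on either particle in the presence of the other via the point-interaction/Neumann series, whose zeroth order is the single-particle field and whose first order carries the interaction through the logarithmic coupling $\Phi_0=-\frac{1}{2\pi}\log\vert z_1-z_2\vert$; the separation hypothesis $\vert z_1-z_2\vert\ge\exp(-\vert\log a\vert^{1-h})$ keeps this coupling in the admissible range. The weighted combination $\tilde{p}=(p^+-p_0)+\frac{1-\omega_{n_0}^2}{1+\omega_{n_0}^2}(p^--p_0)$ is tuned so that the two resonant rational factors telescope, retaining exactly the first-interaction term $\big(\int_D u_2(x)e_{n_0}(x)\,dx\big)^2$ with the clean coefficient $\frac{4\Im(\tau)}{1+\omega_{n_0}^2}$ while the surviving pieces cancel into the remainder; the same far-zone kernel expansion and $\partial_t$ computation as above then deliver (\ref{pressure-tilde-expansion}).
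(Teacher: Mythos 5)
Your skeleton (Poisson representation, splitting the source into a particle piece on $D$ and a background piece on $\Omega$, Taylor expansion of the retarded kernel about $z$, and the resonant spectral expansion of the internal field via \eqref{ve=Ve}) is the same as the paper's. But there is a genuine gap at the step you yourself flag as the heart of the matter: you assert that the background-perturbation term coming from $\Im(\varepsilon_0)\big(\vert u_1\vert^2-\vert u_0\vert^2\big)$ on $\Omega\setminus D$ is ``comparatively of order $a^2$''. It is not. Writing $\vert u_1\vert^2-\vert u_0\vert^2=2\Re\big(\overline{u}_0(u_1-u_0)\big)+\vert u_1-u_0\vert^2$ and using the Lippmann--Schwinger representation $u_1-u_0=\omega^2\mu_0\int_D(\varepsilon_p-\varepsilon_0)G_k u_1$ outside $D$, the cross term (the term $S_4$ inside $T_4$ in the paper's proof) carries one factor of the resonant denominator and has size
\begin{equation*}
\omega^2\mu_0\,\Big(\int_D e_{n_0}\Big)\cdot\frac{\tau}{1-\omega^2\mu_0\tau\lambda_{n_0}}\cdot\mathcal{O}(a)\;\sim\;a\cdot a^{-2}\vert\log a\vert^{-1}\cdot\vert\log a\vert^{h}\cdot a\;=\;\mathcal{O}\big(\vert\log a\vert^{h-1}\big),
\end{equation*}
whereas the leading particle term is of size $\Im(\tau)\,\vert\log a\vert^{2h}\,a^2\sim\vert\log a\vert^{h-1-s}$. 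So for every admissible $s\ge 0$ the cross term is at least as large as the signal, and for $s>0$ it strictly dominates it; it cannot be absorbed into $\mathcal{O}(\vert\log a\vert^{\max(-1,2h-2)})$.

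Consequently your explanation of what the combination $p^++p^--2p_0$ achieves misses its actual purpose. It is not there to cancel small corrections of the particle term that are odd in the frequency shift (the off-resonant modes contribute $\mathcal{O}(\Im(\tau)a^2)$ and the $\omega$-dependence of $u_0$ contributes a relative $\mathcal{O}(\vert\log a\vert^{-h})$, both already inside the error); it is there to annihilate the large cross term $S_4$. Since $S_4^\pm$ is \emph{linear} in $(1-\omega_\pm^2\mu_0\tau\lambda_{n_0})^{-1}$ and $1-\omega_\pm^2\mu_0\tau\lambda_{n_0}=\mp\vert\log a\vert^{-h}$ by \eqref{exactMieresonance}, one has
\begin{equation*}
\frac{1}{1-\omega_+^2\mu_0\tau\lambda_{n_0}}+\frac{1}{1-\omega_-^2\mu_0\tau\lambda_{n_0}}
=\frac{2\,(1-\omega_{n_0}^2\mu_0\tau\lambda_{n_0})}{(1-\omega_+^2\mu_0\tau\lambda_{n_0})(1-\omega_-^2\mu_0\tau\lambda_{n_0})}=0,
\end{equation*}
while the particle terms, being quadratic ($\vert 1-\omega_\pm^2\mu_0\tau\lambda_{n_0}\vert^{-2}=\vert\log a\vert^{2h}$ for both signs), reinforce and produce the factor $2\Im(\tau)$. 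The same issue recurs in your dimer argument: the analogous cross term in $T_4^\star$ is again $\mathcal{O}(\vert\log a\vert^{h-1})$, and the weights $1$ and $\tfrac{1-\omega_{n_0}^2}{1+\omega_{n_0}^2}$ in $\tilde p$ are chosen precisely so that the sum $J$ of the two resonant factors drops from $\mathcal{O}(\vert\log a\vert^{h})$ to $\mathcal{O}(1)$, pushing that term into $\mathcal{O}(\vert\log a\vert^{-1})$. Without identifying and exactly cancelling this scattered-field/background interaction term, the claimed error bounds in \eqref{pressure-to-v_0} and \eqref{pressure-tilde-expansion} cannot be obtained.
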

\
\\
\newline
The formula (\ref{pressure-to-v_0}) means that if we measure before and after injecting one nanoparticle, then we can reconstruct the phaseless data $\vert u_0 \vert(x), x \in \Omega$. Hence, we transform the photoacoustic inverse problem to the inverse scattering using phaseless internal data.

\bigskip

The formula (\ref{pressure-tilde-expansion}) can be expressed using $u_0$ instead of $u_2$ under the condition $0\leq s < \max\{h,\; 1-h\}$ as for (\ref{pressure-to-v_0}).
The formula (\ref{pressure-tilde-expansion}) means that if we measure before and after injecting two closely spaced nanoparticles, 
then we can reconstruct $\int_{D} u_2(x)e_{n_{0}}(x) dx $ and hence $\int_{D} \vert u_2(x) \vert^2 dx$. In addition, a slightly different form of formula (\ref{pressure-to-v_0}), see $(\ref{abxyz})$,
\begin{equation*}
(p^{+} + p^{-} - 2p_{0})(t,x)  =  \frac{-2 \, t \; \Im(\tau) \; \vert \int_{D} u_{1}(x) e_{n_{0}}(x) \, dx \vert^{2}}{(t^{2}-\vert x-z \vert^{2})^{3/2}}   + \mathcal{O}\big(\vert \log(a) \vert^{\max(-1,2h-2)}\big),
\end{equation*}
shows that if we measure before and after injecting one nanoparticle, we can reconstruct $\int_{D} \vert u_1(x) \vert^2 dx$.
Using these two last data, i.e. $\int_{D} \vert u_1(x) \vert^2 dx$ and $\int_{D} \vert u_2(x) \vert^2 dx$, we can reconstruct, via (\ref{reconstruction-k-using-contras-permittivity-only}), $\vert \epsilon_0\vert$. 
Hence, using two different resonances, we reconstruct both the permittivity $\epsilon$ and the conductivity $\sigma_{\Omega}$.
\bigskip

Let us show how we can use (\ref{pressure-to-v_0}) to localize the position $z$ of the injected nanoparticles and estimate $\vert u_{0}(z) \vert$. The corresponding results can also be shown using $(\ref{pressure-tilde-expansion})$.  For this, we use the notations
\begin{eqnarray*}
\tilde{p} &:=& (p^{+} + p^{-} - 2p_{0}), \; A := \frac{- 2 \; \, \Im(\tau) \; \; \vert u_0(z) \vert^{2} }{\vert 1 - \omega^{2} \, \mu_{0} \, \lambda_{n_{0}} \, \tau \vert^{2}}  \, \Bigg( \int_{D} e_{n_{0}} \, dx \Bigg)^{2} \; and \; 
Err  :=   \mathcal{O}\big(\vert \log(a) \vert^{\max(-1,2h-2)}\big).
\end{eqnarray*}
Let $t_{1} \neq t_{2}$ then we have 
\begin{equation}\label{p/p}
\frac{\tilde{p}(t_{1},x)}{\tilde{p}(t_{2},x)}  =  \frac{ A \displaystyle\frac{t_{1}}{(t_{1}^{2}-\vert x-z \vert^{2})^{3/2}}+Err}{ A \displaystyle\frac{t_{2}}{(t_{2}^{2}-\vert x-z \vert^{2})^{3/2}}+Err} =  \frac{t_{1}}{t_{2}} \Bigg( \frac{t_{2}^{2}-\vert x-z \vert^{2}}{t_{1}^{2}-\vert x-z \vert^{2}} \Bigg)^{3/2} + \mathcal{O}\Big(\vert \log(a) \vert^{s+\max(-h,h-1)}\Big), 
\end{equation}
where 
\begin{equation}\label{cdtons}
0 \leq s < \min(h;1-h).
\end{equation}
From $(\ref{p/p})$ we derive the formula
\begin{equation}\label{positionza}
\vert x - z \vert = \Bigg[ \frac{t_{1}^{2} \, (t_{2} \tilde{p}(t_{1},x))^{2/3} - t_{2}^{2} \, (t_{1} \tilde{p}(t_{2},x))^{2/3}}{(t_{2} \tilde{p}(t_{1},x))^{2/3} - (t_{1} \tilde{p}(t_{2},x))^{2/3}} \Bigg]^{\frac{1}{2}} + \mathcal{O}\Big(\vert \log(a) \vert^{s+\max(-h,h-1)}\Big). 
\end{equation}
The expression $(\ref{positionza})$ tells that, for $x \in \partial \Omega$, the point $z$ is in the arc given by the intersection of $\Omega$ and the circle $S$ with center $x$ and radius computable as
\begin{equation}\label{radius}
\Bigg[ \frac{t_{1}^{2} \, (t_{2} \tilde{p}(t_{1},x))^{2/3} - t_{2}^{2} \, (t_{1} \tilde{p}(t_{2},x))^{2/3}}{(t_{2} \tilde{p}(t_{1},x))^{2/3} - (t_{1} \tilde{p}(t_{2},x))^{2/3}} \Bigg]^{\frac{1}{2}}. 
\end{equation}
Then in order to localise $z$, we repeat the same experience with another point $x_{\star} \neq x$, and take the intersection of two arcs, see Figure $\ref{fig1}$. 
\begin{figure}[h!]
\centering
\begin{center}
 \includegraphics[scale=0.4]{./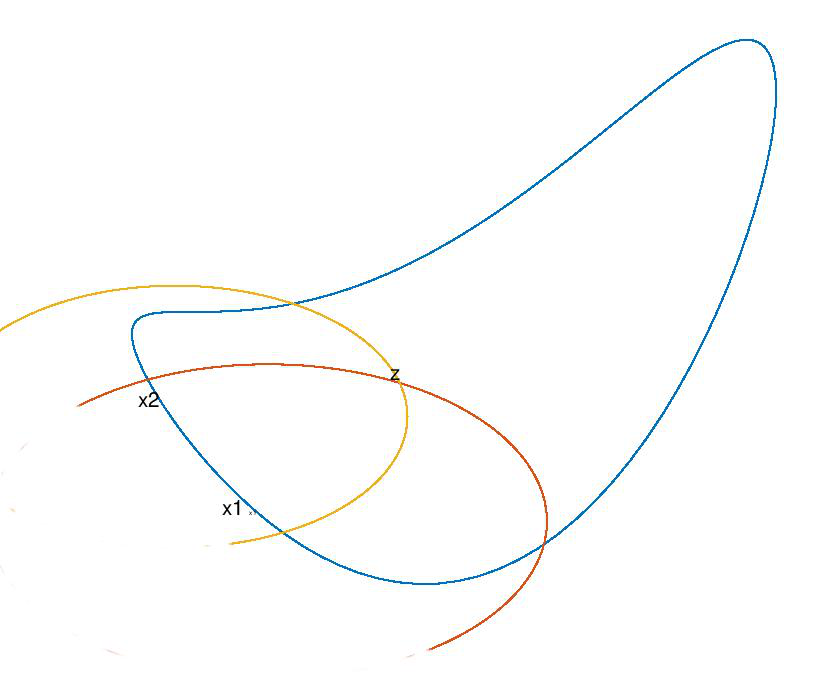}
\end{center}

\caption[Le titre]{Localization of the particles. The blue curve represents $\partial \Omega$ while the red and yellow ones the circles of center $x_{1}:=x$ and $x_{2}:=x^*$ 
and radius $(\ref{radius})$, with $x$ and $x^{\star}$ respectively.}
\label{fig1}
\end{figure}

Assume that $z$ is obtained, then from the equation $(\ref{pressure-to-v_0})$, we get 
\begin{equation*}
\vert u_{0}(z) \vert^{2} = - \, \displaystyle \frac{\vert 1- \omega^{2} \, \mu_{0} \, \lambda_{n_{0}} \, \tau \vert^{2} (t^{2}-\vert x-z \vert^{2})^{3/2} \; \tilde{p}(t,x)}{2 \,t \, \Im(\tau) \,  \Big(\int_{D} e_{n_{0}}\Big)^{2}} + \mathcal{O}\Big(\vert \log(a) \vert^{s+\max(-h,h-1)}\Big). 
\end{equation*}
with 
\begin{equation}\label{cdt2s}
0\leq s < \min \{h ; 1-h \}.
\end{equation}
\bigskip

Let us finish this introduction by comparing our findings with the previous results. To our knowledge, the only work published to analyse the photo-acoustic imaging modality using contrast 
agents is the recent work \cite{Triki-Vauthrin:2017}. The authors propose to use plasmonic resonances instead of dielectric ones. Assuming the acoustic inversion 
to be known and done, as described in section  \ref{photo-acoustic-inversion-known-results}, they perform the electromagnetic inversion. They state the 2D-electromagnetic 
model where the magnetic fields satisfie a divergence form equation. Performing asymptotic expansions, close to these resonances they derive the dominant part of the magnetic field and reconstruct 
the permittivity by an optimization step applied on this dominating term. This result could be compared to Theorem $\ref{Using-only-permittivity-contrast}$, i.e formula 
$(\ref{one-particle-reconst-permittivity-only})$. 

\bigskip

The rest of the paper is organized as follows. In section \ref{Theo-Using-only-permittivity-contrast} and section \ref{Theo-using-both-contrasted-permit-conduct},
we prove Theorem \ref{Using-only-permittivity-contrast} and Theorem \ref{Using-both-permittivity-and-conductivity-contrasts} respectively. 
In section \ref{appendixlemma}, we derive the needed estimates on the electric fields, used in section \ref{Theo-Using-only-permittivity-contrast} and section \ref{Theo-using-both-contrasted-permit-conduct} in 
terms of the contrast of the permittivity, conductivity and for frequencies close to the dielectric resonances. 
Finally, in section \ref{the hypotheses-justification}, we discuss the validity of the conditions in {\bf{Hypotheses}\ref{hyp}}. \\
\bigskip

\textbf{Notations:} Only $\mathbb{L}^{2}$-norms on domains are involved in the text. Therefore, unless indicated, we use $\Vert \cdot \Vert$ without specifying the domain. In addition, we use $<\cdot,\cdot>$ for the corresponding scalar product.      
For a given function $f$ defined on $\overset{M}{\underset{j=1}{\cup}} D_{j}$, we denote by $f_{j} := f_{|D_{j}}$, $j = 1, \cdots,M$. \\ 
The eigenfunctions $\left( e^{(i)}_{n} \right)_{n \in \mathbb{N}}$ of the Newtonian operator stated on $D_{i}$ depend, of course, on $D_{i}$. Nevertheless, unless specified, we use the  notation $\left( e_{n} \right)$ even when dealing with multiple particles located in different positions.       

\bigskip

\section{Proof of Theorem \ref{Using-only-permittivity-contrast}}
\label{Theo-Using-only-permittivity-contrast} 

We split the proof into two subsections. In the first one, we derive the Foldy-Lax algebraic system, see $(\ref{as})$ in proposition $(\ref{propositionas})$, as an approximation of the continuous L.S.E satisfied by the electric field.
In the second subsection, we invert the algebraic system and extract the needed formulas, see $(\ref{Kds})$.

\bigskip

\subsection{Approximation of the L.S.E}
\label{motref}
\bigskip

In the following, we notice by $G_{k}$ the Green kernel for Helmholtz equation in dimension two. This means that $G_{k}$ is a solution of: 
\begin{equation*}
(\Delta + \omega^{2} \, n_{0}^{2}(\cdot) ) G_{k(\cdot)} (\cdot,\cdot) = - \delta_{\cdot}(\cdot), \qquad in \qquad \mathbb{R}^{2}
\end{equation*}
satisfying the S.R.C.

\begin{lemma}\label{GreenKernel}
The Green kernel $G_{k}$ admits the following asymptotic expansion
\begin{eqnarray}\label{Gkexpansion}
\nonumber
G_{k}(\vert x - y \vert) & = &  \frac{-1}{2\pi} \log(\vert x-y \vert) - \frac{1}{2\pi} \log(k(y)) + \frac{i}{4}+\frac{1}{2 \pi}\Bigg[\log(2)-\displaystyle\lim_{p \rightarrow +\infty}\Big( \sum_{m=1}^{p} \frac{1}{m} - \log(p) \Big)\Bigg] \\ 
& + &  \mathcal{O}\big(\vert x-y\vert \; \log(\vert x-y \vert)\big), \quad x \; \text{near} \; y. 
\end{eqnarray} 
\end{lemma}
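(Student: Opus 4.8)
The plan is to recognize $G_k$ as (the variable-coefficient analogue of) the outgoing fundamental solution of the two-dimensional Helmholtz operator and to extract its near-diagonal expansion from the classical small-argument behaviour of the Hankel function. The first step is to recall that for a \emph{constant} wavenumber $k$ the unique outgoing solution of $(\Delta + k^2)G = -\delta$ satisfying the S.R.C. is
\begin{equation*}
G_k(\vert x-y\vert) = \frac{i}{4}\, H_0^{(1)}\big(k\,\vert x-y\vert\big),
\end{equation*}
where $H_0^{(1)} = J_0 + i Y_0$ is the Hankel function of the first kind. Since in the present setting $k = k(\cdot) = \omega\, n_0(\cdot)$ is only $W^{1,\infty}$-smooth, I would first reduce to the constant case by a standard Levi parametrix argument: near the diagonal the singular structure of the Green kernel of an elliptic operator with smooth (in our case $W^{1,\infty}$) coefficients is governed by the coefficient \emph{frozen} at the pole $y$, and the correction is smoother than the leading singularity. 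Concretely, $G_k(\vert x-y\vert) = \tfrac{i}{4} H_0^{(1)}(k(y)\,\vert x-y\vert)$ modulo a term that is controlled by $\mathcal{O}(\vert x-y\vert\,\log\vert x-y\vert)$, and thus absorbed into the stated remainder.

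The second step is the special-function computation. Writing $r := \vert x-y\vert$ and $z := k(y)\, r$, I would insert the classical expansions
\begin{equation*}
J_0(z) = 1 + \mathcal{O}(z^2), \qquad Y_0(z) = \frac{2}{\pi}\Big(\log\tfrac{z}{2} + \gamma\Big)\, J_0(z) + \mathcal{O}(z^2),
\end{equation*}
where $\gamma = \lim_{p\to\infty}\big(\sum_{m=1}^{p}\tfrac{1}{m} - \log p\big)$ is the Euler constant. These combine to give
\begin{equation*}
H_0^{(1)}(z) = 1 + \frac{2i}{\pi}\Big(\log\tfrac{z}{2} + \gamma\Big) + \mathcal{O}(z^2 \log z),
\end{equation*}
so that, multiplying by $i/4$ and using $\log\tfrac{z}{2} = \log k(y) + \log r - \log 2$, one obtains exactly
\begin{equation*}
G_k(r) = -\frac{1}{2\pi}\log r - \frac{1}{2\pi}\log k(y) + \frac{i}{4} + \frac{1}{2\pi}\big(\log 2 - \gamma\big) + \mathcal{O}(z^2 \log z).
\end{equation*}

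Finally, since $k(y)$ is bounded on the relevant region, the error $\mathcal{O}(z^2 \log z) = \mathcal{O}(r^2 \log r)$ is dominated by $\mathcal{O}(r\,\log r)$ as $r\to 0$, which together with the parametrix correction yields the remainder $\mathcal{O}(\vert x-y\vert\,\log\vert x-y\vert)$ claimed in the statement; substituting the limit-form of $\gamma$ gives the displayed formula verbatim. I expect the only genuinely delicate point to be the first step, namely justifying that for the variable-coefficient operator the leading logarithmic singularity carries the local value $k(y)$ and that the deviation from the frozen-coefficient fundamental solution is of the stated lower order; the rest is a routine manipulation of the Bessel/Hankel expansions. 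If one prefers to state the lemma directly for constant background wavenumber (which is the situation actually used, the particles being located where $n_0$ is effectively constant), this first step is unnecessary and the computation is immediate.
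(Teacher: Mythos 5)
Your proof is correct and follows essentially the route the paper intends: the paper's own proof is a one-line citation (to pages 10--12 of a companion reference, adapted to the logarithmic singularity of the 2D fundamental solution), and what you have written out --- the representation $\frac{i}{4}H_{0}^{(1)}(k(y)\vert x-y\vert)$, its small-argument expansion yielding $-\frac{1}{2\pi}\log\vert x-y\vert-\frac{1}{2\pi}\log k(y)+\frac{i}{4}+\frac{1}{2\pi}(\log 2-\gamma)$, and the freezing-of-coefficients step absorbing the variable-index correction into the $\mathcal{O}(\vert x-y\vert\log\vert x-y\vert)$ remainder --- is precisely the content being deferred to that reference. The one point you leave as a sketch, namely the parametrix comparison between $G_{k}$ and the frozen-coefficient kernel for the $W^{1,\infty}$ background, is likewise left implicit by the paper, so your argument is at least as complete as the one given.
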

\begin{proof}
Following the same steps as in \cite{Sini&arabe}, pages 10-12, and taking into account the logarithmic singularity of the fundamental solution of 2D Helmholtz equation we can deduce the expansion $(\ref{Gkexpansion})$.
\end{proof}
\begin{definition}
We define 
\begin{equation*}
a := \frac{1}{2} \underset{1 \leq m \leq M}{diam}(D_{m}),\quad d_{mj} := dist(D_{m},D_{j}),\quad d := \underset{m \neq j \atop 1\leq m ,j \leq M}{ \min} d_{mj},
\end{equation*}
where  $D_{m} = z_{m} + a\, B$ with $B$ is a bounded domain containing the origin. 
\end{definition}
The unique solution of the problem $(\ref{prblm})$, with $D :=  
\overset{M}{\underset{j=1}{\cup}} D_{j}$, satisfies the L.S.E
\begin{equation}\label{O}
u(x) - \omega^{2} \, \mu_{0} \, \int_{D} G_{k}(x,y) \, (\varepsilon_{p}-\varepsilon_{0})(y) \, u(y) \, dy = u_{0}(x) \quad in \; D.
\end{equation}
We set\footnote{We use the notation $v_{m} := u_{|_{D_{m}}}$ instead of $u_{m} := u_{|_{D_{m}}}$ to avoid confusion with $u_{0}, u_{1}$ and $u_{2}$ we defined before concerning the electric fields in the absence or the presence of one or two particles.} $v_{m} := u_{|_{D_{m}}}, m=1,\cdots,M$. Then $(\ref{O})$, for $x \in D_{m}$, rewrites as 
\begin{equation*}
v_{m}(x) - \omega^{2} \mu_{0}  \int_{D_{m}} G_{k}(x,y)  (\varepsilon_{p}-\varepsilon_{0}(y))  v_{m}(y)  dy - \omega^{2} \mu_{0}  \sum_{j \neq m} \int_{D_{j}} G_{k}(x,y) (\varepsilon_{p}-\varepsilon_{0}(y))  v_{j}(y) dy = u_{0}(x).
\end{equation*}
We set: $\tau_{j} = (\varepsilon_{p}-\varepsilon_{0}(z_{j}))$. Assuming $\varepsilon_{0_{|_{\Omega}}}$ to be $\mathcal{W}^{1,\infty}(\Omega)$, the solution $u$ of the scattering problem 
\begin{eqnarray*}
\Big( \Delta + \omega^{2} \, n_{0}(x) \Big) u &=& 0 \quad \text{in} \; \mathbb{R}^{2} \\
u &:=&  u^{i} + u^{s} \; \mbox{and} \; u^{s} \; S.R.C,
\end{eqnarray*}
has a $\mathcal{C}^{1}$-regularity in $\Omega$. Set\footnote{The constant $\Gamma$ will be written as $\Gamma := \frac{i}{4}+\gamma$ where $\gamma$ is the Euler constant.} 
\begin{eqnarray}\label{phigamma}
\Phi_{0}(x,y) := \frac{-1}{2\pi} \log(\vert x-y \vert) \;\; \text{and} \;\;    \Gamma := \frac{i}{4}+\frac{1}{2 \pi}\Bigg[\log(2)-\displaystyle\lim_{p \rightarrow +\infty}\Big( \sum_{m=1}^{p} \frac{1}{m} - \log(p) \Big)\Bigg]. 
\end{eqnarray} 
Expanding  $(\varepsilon_{p}-\varepsilon_{0}(.))$, $u_{0}$ and $G_{k}(\cdot,\cdot)$ near the center $z_{m}$, we obtain 
\begin{eqnarray*}
v_{m}(x) & - & \omega^{2} \, \mu_{0} \, \tau_{m} \int_{D_{m}} \Big(\Phi_{0}(x,y) - \frac{1}{2 \pi} \log(k)(y) + \Gamma \Big) \, v_{m}(y) \, dy \\ 
&-& \omega^{2} \, \mu_{0} \, \tau_{m} \; \mathcal{O}\left( \int_{D_{m}} \vert x-y \vert \, \log\vert x-y \vert \, v_{m}(y) \, dy \right) \\ & + & \omega^{2} \, \mu_{0} \,  \int_{D_{m}} G_{k}(x,y) \, \int_{0}^{1} (y-z_{m}) \centerdot \nabla \varepsilon_{0}(z_{m}+t(y-z_{m})) \, dt \, v_{m}(y) \, dy  \\ & - & \omega^{2} \, \mu_{0} \, \sum_{j \neq m} \, \tau_{j} \, \int_{D_{j}} \Bigg[G_{k}(z_{m};z_{j}) + \int_{0}^{1} \nabla_{x} G_{k}(z_{m}+t(x-z_{m});z_{j})\centerdot(x-z_{m}) \; dt \\ &+& \int_{0}^{1} \nabla_{y}G_{k}(z_{m};z_{j}+t(y-z_{j}))\centerdot(y-z_{j}) \, dt \\ &+& \int_{0}^{1} \nabla_{x} \int_{0}^{1} \nabla_{y} G_{k}(z_{m}+t(x-z_{m});z_{j}+t(y-z_{j}))\centerdot(y-z_{j}) \, dt\centerdot \, (x-z_{m}) \, dt \Bigg]  \, v_{j}(y) \, dy \\ &+& \omega^{2} \, \mu_{0} \, \sum_{j \neq m} \, \int_{D_{j}} G_{k}(x,y) \, \int_{0}^{1} (y-z_{j}) \centerdot \nabla \varepsilon_{0}(z_{j}+t(y-z_{j})) \,dt \, v_{j}(y) \, dy \\ &=& u_{0}(z_{m}) + \int_{0}^{1} \nabla u_{0}(z_{m}+t(x-z_{m})) \centerdot (x-z_{m}) \, dt.
\end{eqnarray*}
We assumed that all nano-particles have the same electromagnetic properties, then $\tau_{j}$ is the same for every $j$ and let us denote it by $\tau$. Define
\begin{equation}\label{A}
w =  \omega^{2} \, \mu_{0} \, \tau \Big[I - \omega^{2} \, \mu_{0} \, \tau \, A_{0} \Big]^{-1}(1) =  \Big[\frac{1}{\omega^{2} \, \mu_{0} \, \tau} I - A_{0} \Big]^{-1}(1),
\end{equation}
and set the following notations
\begin{equation}\label{R}
\textbf{C}_{m} = \int_{D_{m}}  w  dx \quad \& \quad  \textbf{C}_{m}^{\star} = \textbf{C}_{m} \Bigg[1 - \Big(- \frac{1}{2 \pi} \log(k)(z_{m}) + \Gamma \Big)  \textbf{C}_{m}\Bigg]^{-1} \quad \& \quad   Q_{m} = \omega^{2} \, \mu_{0}  \tau  (\textbf{C}_{m}^{\star})^{-1} \, \int_{D_{m}} v_{m} dx.   
\end{equation}

Using the definition of $w$, and integrate $y$ over $D_{m}$,  the self adjointness of the operator $(\lambda I - A_{0})$ and we multiplying both sides of this equation by $\omega^{2} \, \mu_{0} \, \tau \, C_{m}^{-1}$, we obtain 
\begin{eqnarray*}
Q_{m} &-&  \sum_{j \neq m} G_{k}(z_{m};z_{j}) \; \textbf{C}_{j}^{\star} \; Q_{j} = u_{0}(z_{m}) + \omega^{2} \, \mu_{0} \; \tau \, \textbf{C}_{m}^{-1} \; \Bigg[ \\ 
&+&  \int_{D_{m}}\,w \, \int_{D_{m}} \vert x-y \vert \, \log\vert x-y \vert \, v_{m}(y) \, dy \, dx \\ 
& - & \tau^{-1} \, \int_{D_{m}} \, w \int_{D_{m}} G_{k}(x,y) \, \int_{0}^{1} (y-z_{m}) \centerdot \nabla \varepsilon_{0}(z_{m}+t(y-z_{m})) \,dt \; v_{m}(y) \, dy \, dx \\ 
& + &  \sum_{j \neq m} \, \int_{D_{m}} \, w \, \int_{0}^{1} \nabla_{x}G_{k}(z_{m}+t(x-z_{m});z_{j})\centerdot(x-z_{m})\,dt \,dx \, \int_{D_{j}} v_{j} \, dy \\ 
& + & \textbf{C}_{m}  \sum_{j \neq m} \, \int_{D_{j}} \, \int_{0}^{1} \, \nabla_{y} G_{k}(z_{m};z_{j}+t(y-z_{j}))\centerdot(y-z_{j})dt  \, v_{j}(y) \, dy\\ 
& + &  \sum_{j \neq m} \int_{D_{m}} w  \int_{D_{j}} \int_{0}^{1} \nabla \int_{0}^{1} \nabla G_{k}(z_{m}+t(x-z_{m});z_{j}+t(y-z_{j}))\centerdot(y-z_{j}) dt \centerdot (x-z_{m}) dt  v_{j}(y)  dy dx\\ 
&+&   \tau^{-1} \, \, \sum_{j \neq m} \, \int_{D_{m}} \, w \, \int_{D_{j}} G_{k}(x,y) \, \int_{0}^{1} (y-z_{j}) \centerdot \nabla \varepsilon_{0}(z_{j}+t(y-z_{j})) \,dt v_{j}(y) \, dy \, dx \\ 
&+& \big( \omega^{2} \, \mu_{0} \, \tau \big)^{-1} \, \int_{D_{m}} \, w \, \int_{0}^{1} (x-z_{m}) \centerdot \nabla u_{0}(z_{m}+t(x-z_{m})) dt \, dx + \mathcal{O}\Bigg(\textbf{C}_{m} \, a \, \int_{D_{m}} v_{m} dx  \Bigg)\Bigg].
\end{eqnarray*}
For the right side, we keep $u_{0}(z_{m})$ as a dominant term and estimate the other terms as an error. To achieve this goal, we need the following proposition.
\begin{proposition}\label{abc}
We have:
\begin{equation}\label{prioriest}
\Vert u \Vert_{\mathbb{L}^{2}(D)} \leq \vert \log(a) \vert^{h} \, \Vert u_{0} \Vert_{\mathbb{L}^{2}(D)},
\end{equation} 
and
\begin{equation*}
\textbf{C}_{m} = \mathcal{O}( \vert \log(a) \vert^{h-1}).
\end{equation*}
\end{proposition}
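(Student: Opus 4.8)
The plan is to read the L.S.E. $(\ref{O})$ as the operator equation $(I-\omega^{2}\mu_{0}\tau A_{k})\,u=u_{0}$ on $\mathbb{L}^{2}(D)$ and to control the resolvent $(I-\omega^{2}\mu_{0}\tau A_{k})^{-1}$ near the dielectric resonance. First I would reduce $A_{k}$ to its dominant part $A_{0}$: by Lemma \ref{GreenKernel} the kernel difference $G_{k}-\Phi_{0}$ equals the $x$-independent term $\Gamma-\frac{1}{2\pi}\log(k)(y)$ plus a remainder $\mathcal{O}(\vert x-y\vert\log\vert x-y\vert)$, so $A_{k}-A_{0}$ splits into a rank-one operator $u\mapsto\big(\int_{D}(\Gamma-\frac{1}{2\pi}\log(k)(y))u(y)\,dy\big)\,\mathbf{1}$ and a Hilbert--Schmidt remainder. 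On a domain of diameter $a$ the first has operator norm $\mathcal{O}(a^{2})$ and the second $\mathcal{O}(a^{3}\log(1/a))$; multiplied by $\omega^{2}\mu_{0}\tau\sim a^{-2}\vert\log(a)\vert^{-1}$ (recall that here $\epsilon_{m,r}\sim a^{-2}\vert\log(a)\vert^{-1}$ and $\sigma_{m}\sim1$, so $\tau$ is essentially real of this size) the perturbation $\omega^{2}\mu_{0}\tau(A_{k}-A_{0})$ has operator norm $\mathcal{O}(\vert\log(a)\vert^{-1})$.

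Next I would estimate the resolvent of the dominant operator. Since $A_{0}$ is self-adjoint (its kernel $\Phi_{0}$ is real and symmetric) and $\omega^{2}\mu_{0}\tau$ is a scalar, $\omega^{2}\mu_{0}\tau A_{0}$ is normal, whence $\Vert(I-\omega^{2}\mu_{0}\tau A_{0})^{-1}\Vert=\big(\min_{n}\vert1-\omega^{2}\mu_{0}\tau\lambda_{n}\vert\big)^{-1}$. By the frequency choice $\omega^{2}=\omega_{n_{0}}^{2}(1\pm\vert\log(a)\vert^{-h})$ and the definition $\omega_{n_{0}}^{2}=(\mu_{0}\tau\lambda_{n_{0}})^{-1}$ one has $\omega^{2}\mu_{0}\tau\lambda_{n_{0}}=\omega^{2}/\omega_{n_{0}}^{2}=1\pm\vert\log(a)\vert^{-h}$, so the $n_{0}$-term equals $\vert1-\omega^{2}\mu_{0}\tau\lambda_{n_{0}}\vert=\vert\log(a)\vert^{-h}$. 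For $n\neq n_{0}$, Hypotheses \ref{hyp} together with the spectral analysis of the appendix furnish a relative gap $\vert\lambda_{n}/\lambda_{n_{0}}-1\vert\geq c>0$, which gives $\vert1-\omega^{2}\mu_{0}\tau\lambda_{n}\vert\geq c/2$ for $a$ small. Hence the minimum is attained at $n_{0}$ and $\Vert(I-\omega^{2}\mu_{0}\tau A_{0})^{-1}\Vert=\mathcal{O}(\vert\log(a)\vert^{h})$. Combining with the perturbation bound, $\Vert(I-\omega^{2}\mu_{0}\tau A_{0})^{-1}\Vert\cdot\Vert\omega^{2}\mu_{0}\tau(A_{k}-A_{0})\Vert=\mathcal{O}(\vert\log(a)\vert^{h-1})=o(1)$ since $h<1$, so a Neumann-series argument shows that $(I-\omega^{2}\mu_{0}\tau A_{k})^{-1}$ exists with $\Vert(I-\omega^{2}\mu_{0}\tau A_{k})^{-1}\Vert\leq(1+o(1))\,\mathcal{O}(\vert\log(a)\vert^{h})$. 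Applying this to $u_{0}$ yields $(\ref{prioriest})$.

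For $\textbf{C}_{m}$ I would instead work directly with the self-adjoint operator $A_{0}$ and expand $w=[\frac{1}{\omega^{2}\mu_{0}\tau}I-A_{0}]^{-1}(1)$ in its orthonormal eigenbasis $\{e_{n}\}$, getting $\textbf{C}_{m}=\int_{D_{m}}w\,dx=\sum_{n}\frac{\vert\int_{D_{m}}e_{n}\,dx\vert^{2}}{\frac{1}{\omega^{2}\mu_{0}\tau}-\lambda_{n}}$. The rescaling $x=z_{m}+a\xi$ that $\mathbb{L}^{2}$-normalizes $e_{n}$ shows $\int_{D_{m}}e_{n}\,dx=\mathcal{O}(a)$, so the $n_{0}$-term is of size $\frac{\mathcal{O}(a^{2})}{\lambda_{n_{0}}(\omega_{n_{0}}^{2}/\omega^{2}-1)}\sim\frac{a^{2}}{a^{2}\vert\log(a)\vert\cdot\vert\log(a)\vert^{-h}}=\mathcal{O}(\vert\log(a)\vert^{h-1})$, which is exactly the leading term recorded in Theorem \ref{Using-only-permittivity-contrast}. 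The tail $\sum_{n\neq n_{0}}$ is controlled by the relative gap, $\vert\frac{1}{\omega^{2}\mu_{0}\tau}-\lambda_{n}\vert\gtrsim a^{2}\vert\log(a)\vert$, and by Parseval, $\sum_{n}\vert\int_{D_{m}}e_{n}\,dx\vert^{2}\leq\vert D_{m}\vert\sim a^{2}$, yielding a tail of order $\mathcal{O}(\vert\log(a)\vert^{-1})$, subdominant because $h>0$. Hence $\textbf{C}_{m}=\mathcal{O}(\vert\log(a)\vert^{h-1})$.

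I expect the main obstacle to be the resolvent bound itself: verifying that near the resonance only the single eigenvalue $\lambda_{n_{0}}$ becomes critical and that all remaining eigenvalues keep a fixed relative distance, so that the resolvent norm is exactly of order $\vert\log(a)\vert^{h}$ and not larger. This rests on the simplicity and isolation of $\lambda_{n_{0}}$ provided by Hypotheses \ref{hyp} and the appendix, and on checking that the non-self-adjoint part of $A_{k}-A_{0}$ (in particular the rank-one operator into the constants, precisely the direction carrying the near-resonant eigenfunction) does not move the critical eigenvalue enough to destroy the $o(1)$ smallness of the Neumann correction; one must also confirm that the small imaginary part of $\tau$ in this permittivity-only regime is negligible relative to $\vert\log(a)\vert^{-h}$.
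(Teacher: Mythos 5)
Your argument for the single-particle case is correct and takes a genuinely different route from the paper. The paper never forms the resolvent abstractly: it scales the L.S.E.\ to the reference domain $B$, projects onto the orthonormal eigenbasis $\{\overline{e}_n\}$ of the logarithmic potential, isolates the coefficient $\langle\tilde u_1,\overline e_{n_0}\rangle$ (where the factor $\vert 1-\omega^2\mu_0\tau\lambda_{n_0}\vert^{-1}=\vert\log a\vert^{h}$ appears), bounds the remaining coefficients with $\mathcal{O}(1)$ denominators, and then absorbs the $\Vert\tilde u_1\Vert$ error terms; see the estimates of $s_1,s_2,s_3$ and $(\ref{!n0})$, $(\ref{serienneqn0})$. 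Your version packages all of this into two clean facts: normality of $\omega^2\mu_0\tau A_0$ gives $\Vert(I-\omega^2\mu_0\tau A_0)^{-1}\Vert=\big(\min_n\vert 1-\omega^2\mu_0\tau\lambda_n\vert\big)^{-1}=\mathcal{O}(\vert\log a\vert^{h})$, and the non-logarithmic part of the kernel is a perturbation of operator norm $\mathcal{O}(\vert\log a\vert^{-1})$ after multiplication by $\omega^2\mu_0\tau$, so the Neumann series closes because $h<1$. This is more transparent about exactly which spectral inputs are used (the detuning $\vert\log a\vert^{-h}$ at $n_0$ and a uniform gap elsewhere); note that both you and the paper take the gap for $n\neq n_0$ for granted beyond the disc case treated in the appendix. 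Your treatment of $\textbf{C}_m$ is essentially identical to the paper's (cf. $(\ref{A1})$ and $(\ref{F})$): resonant term of size $\vert\log a\vert^{h-1}$, tail $\mathcal{O}(\vert\log a\vert^{-1})$ by Parseval. A minor point: $(\ref{O})$ carries the variable coefficient $\varepsilon_p-\varepsilon_0(y)$ rather than the constant $\tau$; the difference contributes a further perturbation of norm $\mathcal{O}(a)$ and is harmless.

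The one real omission is the multi-particle case, for which the Proposition is actually invoked ($D=\cup_m D_m$, in particular the dimer) and which the paper handles in its Step 2 via the scaled system $(\ref{equa710})$. There your Neumann-series argument does not close at the crude operator-norm level: the off-diagonal coupling from $\mathbb{L}^2(D_j)$ to $\mathbb{L}^2(D_m)$ has norm $\lesssim\omega^2\mu_0\vert\tau\vert\,a^2\log(1/d)\lesssim\vert\log a\vert^{-1}\log(1/d)$, which under the admissible separation $d\geq\exp(-\vert\log a\vert^{1-h})$ is only $\mathcal{O}(\vert\log a\vert^{-h})$ --- exactly the size of the detuning --- so resolvent times coupling is $\mathcal{O}(1)$, not $o(1)$. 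This is precisely the borderline that forces the strict distance condition of Lemma \ref{d>exp}, and it has to be argued separately (as the paper attempts in Step 2, and as the $det^{\star}$ computation around $(\ref{algsystm})$ does more carefully for two particles) rather than absorbed into a generic perturbation bound.
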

\begin{proof}
See Section \ref{appendixlemma}. 
\end{proof}
As the incident wave is smooth and independent on $a$, thanks to $(\ref{prioriest})$, we get
\begin{equation*}
\Vert w \Vert_{\mathbb{L}^{2}(D)} \leq a^{-1} \, \vert \log(a) \vert^{h-1}.
\end{equation*} 
We recall that  
\begin{equation*}
\tau \sim 1 / a^{2} \, \vert \log(a) \vert.
\end{equation*}
The error part contains eight terms. Next we define and estimate every term, then we sum them up. More precisely, we have
\begin{itemize}
\item \underline{Estimation of} 
$S_{1}  :=   \tau \; \textbf{C}_{m}^{-1} \int_{D_{m}}  \,w \,  \int_{D_{m}} \vert x-y \vert \log(\vert x-y \vert) v_{m}(y) dy  \; dx$ 
\begin{eqnarray*}
\vert S_{1} \vert & \precsim &  a^{-2} \, \vert \log(a) \vert^{-1} \; \vert \log(a) \vert^{1-h} \; \Vert w \Vert \, \Bigg[\int_{D_{m}} \Bigg\vert \int_{D_{m}} \vert x-y \vert \log(\vert x-y \vert) v_{m}(y) dy  \Bigg\vert^{2} \; dx \Bigg]^{\frac{1}{2}} \\
& \precsim &  a^{-2} \, \vert \log(a) \vert^{-h} \; a^{-1} \; \vert \log(a) \vert^{h-1} \; \Bigg[\int_{D_{m}} \Bigg( \int_{D_{m}}  \vert v_{m}\vert(y) dy  \Bigg)^{2} \; dx \Bigg]^{\frac{1}{2}} \, a \, \vert \log(a) \vert\\
& = & \mathcal{O}\Big( a^{-2} \, \vert \log(a) \vert^{-1} \, a \, \Vert v_{m} \Vert \, a \, \vert \log(a) \vert \Big),   
\end{eqnarray*}
and then
\begin{equation*}
S_{1} = \mathcal{O}\Big(a \, \vert \log(a) \vert^{h} \, M^{\frac{1}{2}} \Big).
\end{equation*}
\item \underline{Estimation of}
$S_{2} := \textbf{C}_{m}^{-1} \, \int_{D_{m}} w(x) \int_{D_{m}} G_{k}(x,y) \int_{0}^{1} (y-z_{m})\centerdot \nabla \varepsilon_{0}(z_{m}+t(y-z_{m})) dt \, v_{m}(y) \, dy \, dx $
\begin{equation*}
\vert S_{2} \vert  \lesssim  a^{-1} \, \Bigg[ \int_{D_{m}} \Bigg(\int_{D_{m}} \vert G_{k}\vert(x;y) \; \Big\vert \int_{0}^{1} (y-z_{m})\centerdot\nabla\varepsilon_{0}(z_{m}+t(y-z_{m})) dt \Big\vert \; \vert v_{m}\vert(y) dy \Bigg)^{2} dx \Bigg]^{\frac{1}{2}}. 
\end{equation*}
The smoothness of $\varepsilon_{0}$ implies 
$\left\vert \int_{0}^{1} (y-z_{m})\centerdot\nabla\varepsilon_{0}(z_{m}+t(y-z_{m})) dt \right\vert \lesssim \mathcal{O}(a),$ hence
\begin{eqnarray*}
\vert S_{2} \vert & \lesssim & \Vert v_{m} \Vert \; \Bigg[\int_{D_{m}} \int_{D_{m}} \vert G_{k} \vert^{2} (x;y) \; dy \; dx \Bigg]^{\frac{1}{2}} \lesssim  \Vert u \Vert \; a^{2} \; \vert \log(a) \vert, 
\end{eqnarray*}
and then
\begin{equation*}
S_{2} = \mathcal{O}\Big(a^{3} \; \vert \log(a) \vert^{1+h} \; M^{\frac{1}{2}}\Big).
\end{equation*}
\item \underline{Estimation of}
$ S_{3}   :=  \ \tau \, \textbf{C}_{m}^{-1} \, \underset{j \neq m}{\sum} \, \int_{D_{m}} \, w \, \int_{0}^{1} \nabla_{x}G_{k}(z_{m}+t(x-z_{m});z_{j})\centerdot(x-z_{m})\,dt \,dx \, \int_{D_{j}} v_{j} \, dy  $
\begin{equation*}
\vert S_{3} \vert  \lesssim  \frac{1}{a \, \vert \log(a) \vert^{h}} \sum_{j \neq m} \Vert w \Vert \; \Vert v_{j} \Vert \; \Bigg[\int_{D_{m}} \Bigg\vert \int_{0}^{1} \underset{x}{\nabla} G_{k}(z_{m}+t(x-z_{m});z_{j})\centerdot(x-z_{m}) \, dt\, \Bigg\vert^{2}  dx \Bigg]^{\frac{1}{2}}. 
\end{equation*}
Without difficulties, we can check that
\begin{equation*}
\Bigg[\int_{D_{m}}  \Bigg\vert \int_{0}^{1} \underset{x}{\nabla} G_{k}(z_{m}+t(x-z_{m});z_{j})\centerdot(x-z_{m}) \, dt \Bigg\vert^{2} \,  dx \Bigg]^{\frac{1}{2}} \lesssim \frac{a^{2}}{d_{mj}},
\end{equation*}
then we plug this on the previous equation and use Cauchy-Schwartz inequality, to get
\begin{equation*}
\vert S_{3} \vert  \lesssim  \vert \log(a) \vert^{-1} \; \Vert v \Vert \; \Bigg( \sum_{j \neq m} \frac{1}{d_{mj}^{2}} \Bigg)^{\frac{1}{2}}  \lesssim  \vert \log(a) \vert^{h-1} \; a \; M \; d^{-1}.
\end{equation*}
Set 
\begin{equation*}
S_{4} := \tau \, \, \sum_{j \neq m} \, \int_{D_{j}} \, \int_{0}^{1} \, \underset{y}{\nabla}  G_{k}(z_{m};z_{j}+t(y-z_{j}))\centerdot(y-z_{j})dt  \, v_{j}(y) \, dy, 
\end{equation*}
and remark that $S_{4}$ has a similar expression as $S_{3}$, then we obtain:
\begin{equation*}
S_{3} = \mathcal{O}(\vert \log(a) \vert^{h-1} \; a \; M \; d^{-1}) \quad \text{and} \quad S_{4} = \mathcal{O}(\vert \log(a) \vert^{h-1} \; a \; M \; d^{-1}).
\end{equation*} 
\item \underline{Estimation of} 
\begin{equation*}
S_{5} :=
\frac{\tau}{\textbf{C}_{m}} \, \sum_{j \neq m} \int_{D_{m}} w  \int_{D_{j}} \int_{0}^{1} \nabla \int_{0}^{1} \nabla G_{k}(z_{m}+t(x-z_{m});z_{j}+l(y-z_{j}))\centerdot(y-z_{j}) dl \centerdot (x-z_{m}) dt  v_{j}(y) dy  dx
\end{equation*} 
\begin{eqnarray*}
\vert S_{5} \vert & \lesssim & \frac{\Vert w \Vert}{a^{2} \, \vert \log(a) \vert^{h}} \sum_{j \neq m}  \, \Bigg\Vert \int_{D_{j}} \int_{0}^{1} \nabla \int_{0}^{1} \nabla G_{k}(z_{m}+t(\centerdot-z_{m});z_{j}+l(y-z_{j}))\centerdot(y-z_{j}) dl \centerdot (\centerdot-z_{m}) dt \, v_{j}(y) \, dy \Bigg\Vert \\
& \lesssim & \frac{a^{-3}}{\vert \log(a) \vert} \sum_{j \neq m} \Vert v_{j} \Vert \Bigg[\int_{D_{m}} \Bigg\vert \int_{D_{j}} \int_{0}^{1} \nabla \int_{0}^{1} \nabla G_{k}(z_{m}+t(x-z_{m});z_{j}+l(y-z_{j}))\centerdot(y-z_{j}) dl \centerdot (x-z_{m}) dt dy \Bigg\vert^{2} dx\Bigg]^{\frac{1}{2}}. 
\end{eqnarray*}
we have
\begin{equation*}
\int_{D_{m}} \Bigg\vert \int_{D_{j}} \int_{0}^{1} \nabla \int_{0}^{1} \nabla G_{k}(z_{m}+t(x-z_{m});z_{j}+l(y-z_{j}))\centerdot(y-z_{j}) dl \centerdot (x-z_{m}) dt dy \Bigg\vert^{2} dx \lesssim \mathcal{O}\Big( \frac{a^{8}}{d_{mj}^{4}} \Big),
\end{equation*}
hence
\begin{eqnarray*}
\vert S_{5} \vert & \lesssim & a \, \vert \log(a) \vert^{-1} \, \Vert u \Vert \, \Bigg(\sum_{j \neq m} \frac{1}{d_{mj}^{4}} \Bigg)^{\frac{1}{2}} \lesssim  a^{2} \; \vert \log(a) \vert^{h-1} \; M \; d^{-2}, 
\end{eqnarray*}
then
\begin{equation*}
S_{5} = \mathcal{O}(a^{2} \; \vert \log(a) \vert^{h-1} \; M \, d^{-2}).
\end{equation*}
\item \underline{Estimation of} $S_{6} := \textbf{C}_{m}^{-1} \, \underset{j \neq m}{\sum} \int_{D_{m}} w \int_{D_{j}} G_{k}(x,y) \int_{0}^{1} (y-z_{j})\centerdot \nabla \varepsilon_{0}(z_{j}+t(y-z_{j})) \, dt \, v_{j}(y) \, dy \, dx$ 
\begin{eqnarray*}
\vert S_{6} \vert & \lesssim & \sum_{j \neq m} \Vert v_{j} \Vert \; \Bigg[ \int_{D_{m}} \; \int_{D_{j}} \vert G_{k} \vert^{2}(x;y) \,  dy \; dx \, \Bigg]^{\frac{1}{2}}  \lesssim  a^{2} \; \Vert v \Vert \; M^{\frac{1}{2}} \leq a^{3} \; \vert \log(a) \vert^{h} \; M.
\end{eqnarray*}
Then 
\begin{equation*}
S_{6} = \mathcal{O}\Big(a^{3} \; \vert \log(a) \vert^{h} \; M\Big).
\end{equation*}
\item \underline{Estimation of} $S_{7} := \textbf{C}_{m}^{-1} \int_{D_{m}} w \int_{0}^{1} (x-z_{m})\centerdot \nabla u_{0}(z_{m}+t(x-z_{m})) \, dt \, dx$
\begin{eqnarray*}
\vert S_{7} \vert & \lesssim & \vert \log(a) \vert^{1-h} \; \Vert w \Vert \; \Bigg[ \int_{D_{m}} \Bigg\vert \int_{0}^{1} (x-z_{m}) \centerdot \nabla u_{0}(z_{m}+t(x-z_{m})) dt \Bigg\vert^{2}  dx \Bigg]^{\frac{1}{2}}.
\end{eqnarray*}
As $u_{0}$ is smooth, we have
\begin{equation*}
\Bigg[ \int_{D_{m}} \Bigg\vert \int_{0}^{1} (x-z_{m}) \centerdot \nabla u_{0}(z_{m}+t(x-z_{m})) dt \Bigg\vert^{2}  dx \Bigg]^{\frac{1}{2}} = \mathcal{O}(a). 
\end{equation*}
Hence
\begin{equation*}
S_{7} = \mathcal{O}(a).
\end{equation*}
\item \underline{Estimation of} 
$ S_{8} := a \, \tau \, \int_{D_{m}} v_{m}$
\begin{equation*}
\vert S_{8} \vert \leq \tau \, a \, \Vert 1 \vert_{\mathbb{L}^{2}(D_{m})} \,  \Vert v_{m} \Vert_{\mathbb{L}^{2}(D_{m})} = \mathcal{O}(a \, \vert \log(a) \vert^{h-1}). 
\end{equation*}
\end{itemize}
Finally, the error part is 
\begin{equation*}
Error = S_{1}+\cdots+S_{8} = \mathcal{O}(a \; d^{-1} \; \vert \log(a) \vert^{h-1} \; M ). 
\end{equation*}
\begin{proposition}\label{propositionas} 
The vector $\big( Q_{m} \big)_{m=1}^{M}$ satisfie the following algebraic system
\begin{equation}\label{as}
Q_{m} - \sum_{j \neq m} G_{k}(z_{m};z_{j}) \; \textbf{C}_{j}^{\star} \; Q_{j} = u_{0}(z_{m}) + \mathcal{O}(a \; d^{-1} \; \vert \log(a) \vert^{h-1} \; M ).
\end{equation} 
\end{proposition}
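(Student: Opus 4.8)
The statement $(\ref{as})$ is the Foldy--Lax algebraic system, so the plan is to show that, once the L.S.E.\ $(\ref{O})$ has been localized to each particle and all smooth factors together with the kernel have been expanded around the centers $z_{m}$, the resulting identity reorganizes itself into a \emph{structured} part producing the left-hand side of $(\ref{as})$ and a collection of remainders producing the error term. First I would restrict $(\ref{O})$ to $D_{m}$ and split the integral into the diagonal contribution over $D_{m}$ and the off-diagonal contributions over $D_{j}$, $j\neq m$. On the diagonal I would insert the expansion of Lemma \ref{GreenKernel}, separating the singular logarithmic kernel $\Phi_{0}$ — whose associated operator is precisely $A_{0}$ from $(\ref{U})$ — from the constant part $-\frac{1}{2\pi}\log(k)(y)+\Gamma$ and the $\mathcal{O}(\vert x-y\vert\log\vert x-y\vert)$ tail. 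Expanding $\varepsilon_{p}-\varepsilon_{0}$ and $u_{0}$ around $z_{m}$ then leaves a leading self-consistent equation governed by the operator $\frac{1}{\omega^{2}\mu_{0}\tau}I-A_{0}$, with the gradient terms relegated to remainders.

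Next I would introduce the effective density $w=[\frac{1}{\omega^{2}\mu_{0}\tau}I-A_{0}]^{-1}(1)$ of $(\ref{A})$, integrate the localized equation in $x$ over $D_{m}$, and exploit the self-adjointness of $\frac{1}{\omega^{2}\mu_{0}\tau}I-A_{0}$ to move the resolvent onto the constant $1$; this is exactly what turns $\int_{D_{m}}v_{m}\,dx$ into a pairing against $w$, hence into the scalar unknown $Q_{m}$ of $(\ref{R})$ after multiplication by $\omega^{2}\mu_{0}\tau\,\textbf{C}_{m}^{-1}$. The constant diagonal contribution $-\frac{1}{2\pi}\log(k)(z_{m})+\Gamma$ is resummed and absorbed into the renormalized coefficient $\textbf{C}_{m}^{\star}$, so that the principal off-diagonal terms — in which $G_{k}(x,y)$ is frozen at $G_{k}(z_{m};z_{j})$ and $v_{j}$ is integrated to give $\textbf{C}_{j}^{\star}Q_{j}$ — assemble into $\sum_{j\neq m}G_{k}(z_{m};z_{j})\textbf{C}_{j}^{\star}Q_{j}$, while the leading right-hand side is $u_{0}(z_{m})$.

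Finally, the eight remainder terms $S_{1},\dots,S_{8}$ — the logarithmic tail ($S_{1}$), the two gradient-of-$\varepsilon_{0}$ corrections ($S_{2}$, $S_{6}$), the three Taylor corrections of $G_{k}$ in the off-diagonal sum ($S_{3}$, $S_{4}$, $S_{5}$), the gradient-of-$u_{0}$ term ($S_{7}$), and the final term issuing from $\mathcal{O}(\textbf{C}_{m}\,a\int_{D_{m}}v_{m})$ ($S_{8}$) — must be bounded and summed. Each bound rests on the a priori estimate $(\ref{prioriest})$ together with the scales $\tau\sim a^{-2}\vert\log(a)\vert^{-1}$ and $\textbf{C}_{m}=\mathcal{O}(\vert\log(a)\vert^{h-1})$ from Proposition \ref{abc}, supplemented by the geometric decay factors $d_{mj}^{-1}$ controlled uniformly by $d^{-1}$. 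Collecting $S_{1}+\cdots+S_{8}$ then yields the claimed $\mathcal{O}(a\,d^{-1}\vert\log(a)\vert^{h-1}M)$, which completes $(\ref{as})$.

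I expect the genuine difficulty to lie not in this bookkeeping but in the a priori bound $(\ref{prioriest})$ that drives every one of these estimates: controlling $\Vert u\Vert_{\mathbb{L}^{2}(D)}$ uniformly when the incidence frequency sits within $\vert\log(a)\vert^{-h}$ of the resonance $\omega_{n_{0}}$, where the resolvent $[\frac{1}{\omega^{2}\mu_{0}\tau}I-A_{0}]^{-1}$ is nearly singular, is the delicate point. This is precisely why it is isolated in Proposition \ref{abc} and established separately; once it is granted, the powers of $a$ and $\vert\log(a)\vert$ in the eight remainders balance so that none exceeds the stated order, and the proposition follows by collection.
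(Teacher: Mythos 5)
Your proposal follows the paper's own proof essentially step for step: localizing the L.S.E.\ to each $D_{m}$, inserting the Green kernel expansion of Lemma \ref{GreenKernel} to isolate $A_{0}$, introducing $w$ and using self-adjointness to convert $\int_{D_{m}}v_{m}$ into the unknown $Q_{m}$, absorbing the constant part of the kernel into $\textbf{C}_{m}^{\star}$, and then bounding the same eight remainders $S_{1},\dots,S_{8}$ via the a priori estimate of Proposition \ref{abc} and the scales of $\tau$ and $\textbf{C}_{m}$. The identification of the error sources and of the a priori bound near resonance as the genuinely delicate ingredient both match the paper, so the approach is correct and not materially different.
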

The algebraic system $\ref{as}$ can be written, in a matrix form,  as
\begin{equation}\label{per}
(I-B) \, Q = V + Err
\end{equation}
with $B=\Big(B_{mj}\Big)_{m,j=1}^{M}$ such that
 $B_{mj} := G_{k}(z_{m};z_{j}) \, \textbf{C}_{j}^{\star}$ and $V := (u_{0}(z_{1}), \cdots, u_{0}(z_{M}))^{\top}$. \\
In the next proposition, we give conditions under which the  linear system $(\ref{per})$
is invertible.
\begin{lemma}\label{d>exp}
The algebraic system $(\ref{per})$ is invertible if
\begin{equation}\label{Condinv}
d > exp\Big(- \vert \log(a) \vert^{1-h} \Big),
\end{equation}
where $d$ is the minimal distance between the particles. 
\end{lemma}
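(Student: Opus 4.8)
The plan is to show that the off–diagonal matrix $B$ is a contraction, $\Vert B\Vert<1$, so that $I-B$ is invertible with inverse given by the convergent Neumann series $(I-B)^{-1}=\sum_{k\geq 0}B^{k}$. Since the imaging application concerns only one or two particles, it suffices to control the finitely many entries $B_{mj}=G_{k}(z_{m};z_{j})\,\textbf{C}_{j}^{\star}$ and to bound the operator norm through a row--sum (Schur) estimate $\Vert B\Vert\leq \max_{m}\sum_{j\neq m}\vert G_{k}(z_{m};z_{j})\vert\,\vert \textbf{C}_{j}^{\star}\vert$, which is the same as asking $I-B$ to be row--wise diagonally dominant.

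First I would estimate the two factors separately. From Proposition \ref{abc} we have $\textbf{C}_{m}=\mathcal{O}(\vert\log(a)\vert^{h-1})$; since $\log(k)(z_{m})$ and $\Gamma$ are $\mathcal{O}(1)$ while $\textbf{C}_{m}\to 0$ as $a\to 0$, the defining relation in $(\ref{R})$ gives $[1-(-\tfrac{1}{2\pi}\log(k)(z_{m})+\Gamma)\textbf{C}_{m}]^{-1}=1+\mathcal{O}(\textbf{C}_{m})$, so that $\textbf{C}_{m}^{\star}=\mathcal{O}(\vert\log(a)\vert^{h-1})$ as well. Next, Lemma \ref{GreenKernel} shows that for $m\neq j$ the Green kernel is dominated by its logarithmic part, $G_{k}(z_{m};z_{j})=-\tfrac{1}{2\pi}\log\vert z_{m}-z_{j}\vert+\mathcal{O}(1)$, whence $\vert G_{k}(z_{m};z_{j})\vert\lesssim \vert\log(d_{mj})\vert\leq \vert\log(d)\vert$, using $d_{mj}\geq d$.

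Combining these estimates yields $\Vert B\Vert\lesssim \vert\log(a)\vert^{h-1}\,\vert\log(d)\vert$ (with a harmless factor $M-1$ counting the interacting particles, which equals $1$ in the dimer regime). The role of the hypothesis is precisely to balance these two opposing logarithmic scales: the condition $d>\exp(-\vert\log(a)\vert^{1-h})$ is equivalent to $\vert\log(d)\vert<\vert\log(a)\vert^{1-h}$, so that $\Vert B\Vert\lesssim \vert\log(a)\vert^{h-1}\,\vert\log(a)\vert^{1-h}=\mathcal{O}(1)$, and in fact strictly below $1$ for $a$ small enough once the implied constant is tracked. The Neumann series then converges and $(I-B)^{-1}$ exists.

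The main obstacle I expect is not the individual bounds but the sharp bookkeeping in this last step: the logarithmic singularity of $G_{k}$ blows up exactly as the separation $d$ shrinks, while $\textbf{C}_{j}^{\star}$ decays like $\vert\log(a)\vert^{h-1}$, and one must verify that the product $\vert\log(d)\vert\,\vert\log(a)\vert^{h-1}$ stays \emph{strictly} under $1$ rather than merely $\mathcal{O}(1)$. This is why the threshold is taken exponentially small in $\vert\log(a)\vert^{1-h}$: it is the precise rate at which the kernel growth can be absorbed by the decay of $\textbf{C}_{j}^{\star}$, keeping $\Vert B\Vert<1$ and thus preserving the invertibility of $I-B$.
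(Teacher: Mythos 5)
Your proposal is correct and follows essentially the same route as the paper: a row--sum bound on $\Vert B\Vert$ (equivalently, convergence of the Neumann series) obtained by combining $\textbf{C}_{j}^{\star}=\mathcal{O}(\vert\log(a)\vert^{h-1})$ with the logarithmic growth of $G_{k}$, the threshold on $d$ being exactly the point where $\vert\log(d)\vert\,\vert\log(a)\vert^{h-1}$ drops below $1$. The only cosmetic difference is that the paper treats the sum $\sum_{j\neq m}\log(1/d_{mj})$ via its Lemma \ref{P}, whereas you bound each term by $\log(1/d)$ and count the $M-1$ neighbours, which for the relevant case $M=2$ amounts to the same estimate.
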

\begin{proof} of $\textbf{Lemma} (\ref{d>exp})$.
Let us evaluate the norm of $B$. For this we have: 
\begin{eqnarray*}
\Vert B \Vert & = & \max_{m} \, \sum_{j \neq m} \vert B_{mj} \vert \stackrel{\text{def}} = \max_{m} \sum_{j \neq m} \Bigg\vert G_{k}(z_{m};z_{j})  \Bigg[\textbf{C}^{-1}_{j} - \Big( \frac{-1}{2\pi} \log(k)(z_{j}) + \Gamma \Big)\Bigg]^{-1} \Bigg\vert \\
 & \leq &  \vert \log(a) \vert^{h-1} \sum_{j \neq m} \log\Bigg(\frac{1}{d_{mj}}\Bigg).  
\end{eqnarray*}
We need the following lemma
\begin{lemma}\label{P}
We have
\begin{equation}\label{lmjdmj}
\sum_{j \neq m} \log(1/d_{mj}) = \log(1/d). 
\end{equation}
\end{lemma}
\begin{proof}of \textbf{Lemma \ref{P}}\\
We set $  \log(1/d_{mj}) = 1/l_{mj}$ and $l = \displaystyle\min_{j \neq m} l_{mj}$. Then 
\begin{equation}\label{lmj}
\sum_{j \neq m} \log\Bigg(\frac{1}{d_{mj}}\Bigg) = \sum_{j \neq m} \frac{1}{l_{mj}} \stackrel{(\star)}= \frac{1}{l}.
\end{equation}
At first we assume that $(\star)$ is checked. Then we have
\begin{equation}\label{dmj}
l = \min_{j \neq m} \frac{1}{\log(1/d_{mj})} = \frac{1}{\log(\displaystyle\max_{j \neq m} (1/d_{mj}))} = \frac{1}{\log(1/(\displaystyle\min_{j \neq m} d_{mj}))} = \frac{1}{\log(1/d)}.
\end{equation}
Then $(\ref{lmj})$ combined with $(\ref{dmj})$ give a justification of $(\ref{lmjdmj})$.\\
Now, in order to prove $(\star)$ we modify to the two dimensional case the proof, done for three dimensional case, given in (\cite{ASCD}, page 13). We get
\begin{equation*}
\sum_{i=1 \atop i \neq j}^{M} \frac{1}{l^{k}_{ij}} = \left\{
\begin{array}{lll}
    \mathcal{O}(l^{-k}) + \mathcal{O}(l^{-2\alpha}) & if & k < 2\\
    \mathcal{O}(l^{-2}) + \mathcal{O}(l^{-2\alpha} \vert \log(l) \vert) & if  & k=2 \\
   \mathcal{O}(l^{-k}) + \mathcal{O}(l^{-\alpha k}) & if & k > 2.
    \end{array}
\right.
\end{equation*}
\end{proof}
Based on lemma $\ref{P}$ the condition $\Vert B \Vert < 1$, is fulfilled if
\begin{equation}\label{cdtsurd}
\log(1/d) < \vert \log(a) \vert^{1-h} \quad \text{or} \quad d > \exp\Big(-\vert \log(a) \vert^{1-h}\Big).
\end{equation} 
\end{proof}


\subsection{Inversion of the derived  Foldy-Lax algebraic system (\ref{as})}

\bigskip

Here, we deal with the case of two particles, i.e $M = 2$. In the equation $(\ref{as})$ we use the condition $d \backsim a^{\vert \log(a) \vert^{-h}}$, then we get  
\[
\left\{
\begin{array}{r c l}
Q_{1} - G_{k}(z_{1};z_{2}) \, \textbf{C}_{2}^{\star} Q_{2} &=& u_{0}(z_{1}) + \mathcal{O}(a^{1-\vert \log(a) \vert^{-h}} \, \, \vert \log(a) \vert^{h-1}),\\
&&\\
Q_{2} - G_{k}(z_{2};z_{1}) \, \textbf{C}_{1}^{\star} Q_{1} &=& u_{0}(z_{2}) + \mathcal{O}(a^{1-\vert \log(a) \vert^{-h}} \, \vert \log(a) \vert^{h-1}). 
\end{array}
\right.
\]
We check that the condition $d \backsim a^{\vert \log(a) \vert^{-h}}$ is sufficient for the invertibility of the last system. For this, we have from $(\ref{cdtsurd})$
\begin{equation*}
d > \exp\Big(-\vert \log(a) \vert^{1-h}\Big) = \Big(e^{-\vert \log(a) \vert} \Big)^{- \vert \log(a) \vert^{-h}} = a^{\vert \log(a) \vert^{-h}}.
\end{equation*}
Now, we assume that\footnote{Remember that we assumed that all nano-particles have the same electromagnetic properties.} $\textbf{C}_{1} = \textbf{C}_{2} = \textbf{C}$ and use the expansion of $G_{k}(z_{m};z_{j})$, see $(\ref{Gkexpansion})$, to obtain
\[
\left\{
\begin{array}{r c l}
Q_{1} - \Big[\Phi_{0}(z_{1};z_{2}) - \frac{1}{2\pi}\log(k)(z_{1})+\Gamma \Big] \, \textbf{C}_{2}^{\star} Q_{2} &=& u_{0}(z_{1}) + \mathcal{O}\bigg(\displaystyle\frac{a^{1-\vert \log(a) \vert^{-h}}}{ \, \vert \log(a) \vert^{1-h}}\bigg) + \mathcal{O}\bigg(d \, \vert \log(d) \vert \, \textbf{C}_{2}^{\star} Q_{2} \bigg), \\
&&\\
Q_{2} - \Big[\Phi_{0}(z_{2};z_{1}) - \frac{1}{2\pi}\log(k)(z_{2})+\Gamma \Big] \, \textbf{C}_{1}^{\star} Q_{1} &=& u_{0}(z_{2}) + \mathcal{O}\bigg(\displaystyle\frac{a^{1-\vert \log(a) \vert^{-h}}}{ \, \vert \log(a) \vert^{1-h}}\bigg) + \mathcal{O}\bigg(d \, \vert \log(d) \vert \, \textbf{C}_{1}^{\star} Q_{1} \bigg).
\end{array}
\right.
\]
We can estimate
\begin{equation}\label{Q}
d \, \vert \log(d) \vert \, \textbf{C}_{i}^{\star} Q_{i}  = \mathcal{O}(a^{\vert \log(a) \vert^{-h}}), \quad for \quad i=1,2,
\end{equation}
because, by the definition of $Q_{i}$, see $(\ref{R})$, we have 
\begin{equation*}
d \, \vert \log(d) \vert \, \textbf{C}_{i}^{\star} Q_{i} =  d \, \vert \log(d) \vert \, \textbf{C}_{i}^{\star} \, \omega^{2} \, \mu_{0} \, \tau \,  \big(\textbf{C}_{i}^{\star}\big)^{-1} \int_{D_{i}} v \, dx  \lesssim  d \, \vert \log(d) \vert \, \vert \tau \vert \, \Vert 1 \Vert \, \Vert u \Vert.
\end{equation*}
The value of $d$ and $\vert \tau \vert$ are known, and we have an a priori estimate about $\Vert u \Vert$ given by $(\ref{prioriest})$, then 
\begin{equation*}
\mathcal{O}(d \, \log(d)) \; \textbf{C}_{i}^{\star} \; Q_{i}  \lesssim  a^{\vert \log(a) \vert^{-h}} \, \vert \log(a) \vert^{1-h} \, a^{-2} \, \vert \log(a) \vert^{-1} \, a \,  \vert \log(a) \vert^{h} \, a = a^{\vert \log(a) \vert^{-h}}. 
\end{equation*}
This proves $(\ref{Q})$. \\
With these estimations the last system can be written as
\begin{equation}\label{ccequastar}
\left\{%
\begin{array}{lll}
    Q_{1} - \Big[\Phi_{0}(z_{1};z_{2}) - \frac{1}{2\pi}\log(k)(z_{1})+\Gamma \Big] \, \textbf{C}_{2}^{\star} Q_{2} &=& u_{0}(z_{1}) + \mathcal{O}(d), \\
&&\\
Q_{2} - \Big[\Phi_{0}(z_{2};z_{1}) - \frac{1}{2\pi}\log(k)(z_{2})+\Gamma \Big] \, \textbf{C}_{1}^{\star} Q_{1} &=& u_{0}(z_{2}) + \mathcal{O}(d). 
    \end{array}%
\right.
\end{equation}
We need the following lemma to simplify the last system. 
\begin{lemma}
Since $k$ is $\mathcal{C}^{1}$-smooth and $z_{1}$ is close to $z_{2}$ at a distance $d$, we obtain
\begin{equation*}
\log(k)(z_{2}) = \log(k)(z_{1}) + \mathcal{O}\big(d \big) 
\quad \text{and} \quad
\textbf{C}_{2}^{\star} = \textbf{C}_{1}^{\star} + \mathcal{O}\big(d \; \textbf{C}^{\,2}\big).
\end{equation*}
\end{lemma}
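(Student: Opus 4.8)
The plan is to treat the two claims separately: the first is an elementary consequence of the smoothness of $k$, while the second is a direct algebraic manipulation of the defining formula (\ref{R}).

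For the first identity, I would use that $k$ is $\mathcal{C}^{1}$-smooth and bounded away from $0$ near the cluster, so that $\log(k)$ is itself $\mathcal{C}^{1}$ there. Writing the fundamental theorem of calculus along the segment joining $z_{1}$ to $z_{2}$,
$$
\log(k)(z_{2}) - \log(k)(z_{1}) = \int_{0}^{1} \nabla \log(k)\big(z_{1} + t(z_{2}-z_{1})\big) \cdot (z_{2} - z_{1}) \, dt,
$$
and bounding $\nabla \log(k)$ by its finite, $a$-independent sup-norm while using $\vert z_{2} - z_{1} \vert = d$ yields $\log(k)(z_{2}) = \log(k)(z_{1}) + \mathcal{O}(d)$.

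For the second identity, I would insert the definition (\ref{R}) with $\textbf{C}_{1} = \textbf{C}_{2} = \textbf{C}$. Abbreviating $\alpha_{m} := -\frac{1}{2\pi}\log(k)(z_{m}) + \Gamma$, the defining relation reads $\textbf{C}_{m}^{\star} = \textbf{C}\,(1 - \alpha_{m} \textbf{C})^{-1}$, so a common-denominator computation gives
$$
\textbf{C}_{2}^{\star} - \textbf{C}_{1}^{\star} = \frac{(\alpha_{2} - \alpha_{1}) \, \textbf{C}^{2}}{(1-\alpha_{1}\textbf{C})(1-\alpha_{2}\textbf{C})}.
$$
The numerator factor $\alpha_{2} - \alpha_{1} = -\frac{1}{2\pi}\big(\log(k)(z_{2}) - \log(k)(z_{1})\big) = \mathcal{O}(d)$ by the first identity, which already produces the desired factor $d \, \textbf{C}^{2}$. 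It then remains only to bound the denominator from below.

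The only point that needs care --- and what I view as the main, though mild, obstacle --- is to show that $(1-\alpha_{1}\textbf{C})(1-\alpha_{2}\textbf{C})$ stays bounded away from zero as $a \to 0$. Here I would use that $\Gamma$ is a fixed constant and $\log(k)(z_{m})$ is $\mathcal{O}(1)$, since the background wavenumber $k$ is $a$-independent and bounded at the centers, so each $\alpha_{m} = \mathcal{O}(1)$; combined with the estimate $\textbf{C} = \mathcal{O}(\vert \log(a) \vert^{h-1})$ from Proposition \ref{abc}, which tends to $0$ because $0 < h < 1$, this gives $\alpha_{m}\textbf{C} = o(1)$ and hence $(1-\alpha_{1}\textbf{C})(1-\alpha_{2}\textbf{C}) = 1 + o(1)$, bounded away from zero. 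Substituting this lower bound into the displayed quotient yields $\textbf{C}_{2}^{\star} - \textbf{C}_{1}^{\star} = \mathcal{O}(d \, \textbf{C}^{2})$, completing the proof.
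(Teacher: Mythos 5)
Your proposal is correct and follows essentially the same route as the paper, which simply says to Taylor-expand $k$ for the first identity and then use the definition of $\textbf{C}_{1,2}^{\star}$ together with $\textbf{C}_{1}=\textbf{C}_{2}$ for the second; your write-up merely makes explicit the common-denominator computation and the (valid) observation that $\alpha_{m}\textbf{C}=o(1)$ keeps the denominator bounded away from zero.
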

\begin{proof}
Use Taylor expansion of the function $k$ to get the first equality. Now the first one is proved, we use the definition of $\textbf{C}_{1,2}^{\star}$ and the fact that $\textbf{C}_{1} = \textbf{C}_{2}$  to obtain the second equality. 
\end{proof}
We use the last lemma to write the system $(\ref{ccequastar})$ as
\begin{equation*}
\left\{%
\begin{array}{lll}
    Q_{1} - \Big[\Phi_{0}(z_{1};z_{2}) - \frac{1}{2\pi}\log(k)(z_{1})+\Gamma \Big] \, \textbf{C}_{1}^{\star} Q_{2} &=& u_{0}(z_{1}) + \mathcal{O}(d), \\
&&\\
Q_{2} - \Big[\Phi_{0}(z_{2};z_{1}) - \frac{1}{2\pi}\log(k)(z_{1})+\Gamma \Big] \, \textbf{C}_{1}^{\star} Q_{1} &=& u_{0}(z_{2}) + \mathcal{O}(d). 
    \end{array}%
\right.
\end{equation*}
\begin{remark}
To simplify notations, we write $\Phi_{0}$ (respectively $\frac{-1}{2\pi}\log(k)$, $\textbf{C}^{\star}$) instead of $\Phi_{0}(z_{1};z_{2})$ (respectively $\frac{-1}{2 \pi} \, \log(k(z_{1}))$, $\textbf{C}_{1}^{\star}$).
\end{remark}
After resolution of this algebraic system, we obtain  
\begin{equation*}\label{usedpressure}
\left\{%
\begin{array}{lll}
    Q_{1} &=& \displaystyle\frac{u_{0}(z_{1})}{1-\big[\Phi_{0} +  \frac{-1}{2\pi}\log(k)+\Gamma \big] \, \textbf{C}^{\star}} + \mathcal{O}(d), \\
&&\\
Q_{2} &=& \displaystyle\frac{u_{0}(z_{2})}{1-\big[\Phi_{0} + \frac{-1}{2\pi}\log(k)+\Gamma \big] \, \textbf{C}^{\star}} + \mathcal{O}(d). 
    \end{array}%
\right.
\end{equation*}
We use the definition of $Q_{1,2}$, see $(\ref{R})$, to get
\begin{equation*}\label{equa53}
\int_{D_{1}} v \; dx = \displaystyle\frac{u_{0}(z_{1})}{\omega^{2} \, \mu_{0} \, \tau \big[(\textbf{C}^{\star})^{-1} - (\Phi_{0} + (\frac{-1}{2\pi}\log(k)+\Gamma))\big] \, } + \mathcal{O}(d \, a^{2} \, \vert \log(a) \vert^{h}).
\end{equation*}
Then
\begin{eqnarray*}
\int_{D_{1}} v \; dx &=& \displaystyle\frac{u_{0}(z_{2})}{\omega^{2} \, \mu_{0} \, \tau \big[(\textbf{C}^{\star})^{-1} - (\Phi_{0} + (\frac{-1}{2\pi}\log(k)+\Gamma))\big] \, } \\ &+& 
\displaystyle\frac{u_{0}(z_{1})-u_{0}(z_{2})}{\omega^{2} \, \mu_{0} \, \tau \big[(\textbf{C}^{\star})^{-1} - (\Phi_{0} + (\frac{-1}{2\pi}\log(k)+\Gamma))\big] \, } +
\mathcal{O}(d \, a^{2} \, \vert \log(a) \vert^{h}), 
\end{eqnarray*}
we estimate the term 
\begin{equation*}
\frac{u_{0}(z_{1})-u_{0}(z_{2})}{\omega^{2} \, \mu_{0} \, \tau \big[(\textbf{C}^{\star})^{-1} - (\Phi_{0} + (\frac{-1}{2\pi}\log(k)+\Gamma))\big] \, }
\end{equation*}
as $\mathcal{O}(d \, a^{2} \, \vert \log(a) \vert^{h})$, and use this to obtain
\begin{eqnarray*}
\int_{D_{1}} v \; dx &=& \displaystyle\frac{u_{0}(z_{2})}{\omega^{2} \, \mu_{0} \, \tau \big[(\textbf{C}^{\star})^{-1} - (\Phi_{0} + (\frac{-1}{2\pi}\log(k)+\Gamma))\big] \, } + 
\mathcal{O}(d \, a^{2} \, \vert \log(a) \vert^{h}) \\
&=& \int_{D_{2}} v \, dx + \mathcal{O}(d \, a^{2} \, \vert \log(a) \vert^{h}),
\end{eqnarray*} 
and finally
\begin{equation}\label{intv1intv2}
\int_{D_{1}} v \, dx = \int_{D_{2}} v \, dx + \mathcal{O}(d \, a^{2} \, \vert \log(a) \vert^{h}).
\end{equation}
By adding the two equations of system $(\ref{ccequastar})$, we get 
\begin{equation}\label{equastar}
\textbf{C}^{-1} - \Big( \Phi_{0}+2(-\log(k) / 2 \pi +\Gamma)\Big)  = \frac{u_{0}(z_{1}) +  u_{0}(z_{2})}{\omega^{2} \; \mu_{0} \; \tau \; \Bigg[ \displaystyle\int_{D_{1}} v \, dy +  \int_{D_{2}} v \, dy \Bigg]} +  \frac{\mathcal{O}\big(d \, \tau^{-1}\big)}{  \displaystyle \int_{D_{1}} v \, dy + \int_{D_{2}} v \, dy}.
\end{equation}
We use equation $(\ref{intv1intv2})$ to rewrite the denominator as
\begin{eqnarray*}
\omega^{2} \; \mu_{0} \; \tau \; \Bigg[ \int_{D_{1}} v \, dy + \int_{D_{2}} v \, dy \Bigg] & = & \omega^{2} \; \mu_{0} \; \tau \; \Bigg[2 \int_{D_{2}} v \, dy + \mathcal{O}(d \, a^{2} \, \vert \log(a) \vert^{h}) \Bigg] \\
& = & 2\, \omega^{2} \; \mu_{0} \; \tau \; \int_{D_{2}} v \, dy \, \Bigg[1 + \frac{\mathcal{O}(d \, a^{2} \, \vert \log(a) \vert^{h})}{\int_{D_{2}} v \, dy} \Bigg] \\
& = & 2\, \omega^{2} \; \mu_{0} \; \tau \; \int_{D_{2}} v \, dy \, \Big[1 + \mathcal{O}(d) \Big],
\end{eqnarray*}
then equation $(\ref{equastar})$ takes the form
\begin{equation*}
\textbf{C}^{-1} - \Big( \Phi_{0}+2(- \log(k) / 2 \pi +\Gamma)\Big) = \frac{u_{0}(z_{1}) +  u_{0}(z_{2})}{2\, \omega^{2} \; \mu_{0} \; \tau \; \displaystyle\int_{D_{2}} v \, dy } \Big[1 + \mathcal{O}(d) \Big] + \frac{\mathcal{O}\big(d \, \tau^{-1} \big)}{  \; \displaystyle \int_{D_{2}} v \, dy \, \Big[1 + \mathcal{O}(d) \Big]}, 
\end{equation*}
We manage the errors 
\begin{equation*}   
 \textbf{C}^{-1} - \Big( \Phi_{0}+2(- \log(k) / 2 \pi +\Gamma)\Big)  = \frac{u_{0}(z_{1}) +  u_{0}(z_{2})}{2\, \omega^{2} \; \mu_{0} \; \tau \; \displaystyle\int_{D_{2}} v \, dy } + \mathcal{O}(d \, \vert \log(a) \vert^{1-h}) 
\end{equation*}
\begin{eqnarray*} 
\phantom{Invisible text} \qquad \qquad &=& \frac{2\,u_{0}(z_{2})}{2\, \omega^{2} \; \mu_{0} \; \tau \; \displaystyle\int_{D_{2}} v \, dy } + \frac{\int_{0}^{1} (z_{1} -z_{2}) \centerdot \nabla u_{0}(z_{2}+t(z_{1}-z_{2})) dt}{2\, \omega^{2} \; \mu_{0} \; \tau \; \displaystyle\int_{D_{2}} v \, dy } +\mathcal{O}(d \, \vert \log(a) \vert^{1-h}) \\
 &=& \frac{u_{0}(z_{2})}{\omega^{2} \; \mu_{0} \; \tau \; \displaystyle\int_{D_{2}} v \, dy } +\mathcal{O}(d \, \vert \log(a) \vert^{1-h}), 
\end{eqnarray*}
and take the modulus, we derive the identity:
\begin{equation}\label{equa55}
\Bigg\vert \textbf{C}^{-1} - \Big( \Phi_{0}+2(- \log(k) / 2\pi +\Gamma)\Big) \Bigg\vert^{2} = \frac{\vert u_{0}(z_{2}) \vert^{2}}{\vert  \omega^{2} \; \mu_{0} \; \tau \vert^{2} \; \Big\vert \displaystyle\int_{D_{2}} v \, dy \Big\vert^{2}} +\mathcal{O}(d \, \vert \log(a) \vert^{2(1-h)}).
\end{equation}
Unfortunately, from the acoustic inversion, we get only data of the form $\int_{D_{1,2}} \vert v \vert^{2} dx$ and in the last equation we deal with $\vert \int_{D_{1,2}} v \, dx \vert^{2}$. The next lemma makes a link between  these two quantities. 
\begin{lemma}
We have 
\begin{equation}\label{equa56}
\Big\vert \int_{D_{i}} v \; dy \Big\vert^{2} = a^{2} \, \Big(\int_{B} \overline{e}_{n_{0}} \; dy \Big)^{2} \; \int_{D_{i}} \vert v \vert^{2} \; dy + \mathcal{O}(a^{4} \, \vert \log(a) \vert^{h}), \quad i=1,2.
\end{equation}
\end{lemma}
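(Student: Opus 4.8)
The plan is to use that, for the near-resonant frequencies $(\ref{resoance-n-0-2})$, the internal field $v:=u_{|D_i}$ is dominated by the resonant eigenfunction $e_{n_0}$ of $A_0$. I would therefore decompose it orthogonally in $\mathbb{L}^{2}(D_i)$ as
\begin{equation*}
v = c\, e_{n_0} + v^{\perp},\qquad c:=\frac{<v,e_{n_0}>}{\Vert e_{n_0}\Vert^{2}},\qquad <v^{\perp},e_{n_0}>=0,
\end{equation*}
and keep track of two orders. From the resolvent representation $v=\big[I-\omega^{2}\mu_{0}\tau A_{0}\big]^{-1}(u_{0}+\cdots)$ together with $(\ref{resoance-n-0-2})$, which forces $1-\omega^{2}\mu_{0}\tau\lambda_{n_0}\sim\vert\log(a)\vert^{-h}$, the resonant coefficient is amplified, $\vert c\vert\sim\vert\log(a)\vert^{h}$, whereas the non-resonant remainder obeys the sharper bound $\Vert v^{\perp}\Vert=\mathcal{O}(a)$. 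The latter is obtained by expanding in the orthonormal eigenbasis $\{e_{n}/\Vert e_{n}\Vert\}$ and noting that the factors $(1-\omega^{2}\mu_{0}\tau\lambda_{n})^{-1}$ remain $\mathcal{O}(1)$ for $n\neq n_{0}$ by the spectral gap of $A_{0}$, combined with the a priori estimate $(\ref{prioriest})$.

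The rest is a direct computation. Since $A_{0}$ is real and self-adjoint its eigenfunctions are real, so $\bar e_{n_0}=e_{n_0}$, and
\begin{equation*}
\int_{D_i}v = c\int_{D_i}e_{n_0}+\int_{D_i}v^{\perp},\qquad \int_{D_i}\vert v\vert^{2}=\vert c\vert^{2}\Vert e_{n_0}\Vert^{2}+\Vert v^{\perp}\Vert^{2},
\end{equation*}
the cross term in the second identity disappearing by orthogonality. Cauchy-Schwarz with $\Vert 1\Vert_{\mathbb{L}^{2}(D_i)}\sim a$ gives $\big\vert\int_{D_i}v^{\perp}\big\vert\lesssim a\,\Vert v^{\perp}\Vert=\mathcal{O}(a^{2})$; hence, squaring the first identity and estimating the cross term by $\vert c\int_{D_i}e_{n_0}\vert\cdot\vert\int_{D_i}v^{\perp}\vert\lesssim a^{2}\vert\log(a)\vert^{h}\cdot a^{2}$ and the last term by $\big\vert\int_{D_i}v^{\perp}\big\vert^{2}=\mathcal{O}(a^{4})$, one gets
\begin{equation*}
\Big\vert\int_{D_i}v\Big\vert^{2}=\vert c\vert^{2}\Big(\int_{D_i}e_{n_0}\Big)^{2}+\mathcal{O}\big(a^{4}\vert\log(a)\vert^{h}\big).
\end{equation*}

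It then remains to remove $c$ by rescaling. Writing $\xi=(x-z_i)/a$ and letting $e_{n_0}$ in the $\int_B$ notation denote the $\mathbb{L}^{2}(B)$-normalized eigenfunction, one has $\Vert e_{n_0}\Vert_{\mathbb{L}^{2}(D_i)}^{2}=a^{2}$ and $\int_{D_i}e_{n_0}=a^{2}\int_{B}\bar e_{n_0}$, so that $\big(\int_{D_i}e_{n_0}\big)^{2}=a^{2}\big(\int_{B}\bar e_{n_0}\big)^{2}\Vert e_{n_0}\Vert^{2}$. Substituting this, and using $\vert c\vert^{2}\Vert e_{n_0}\Vert^{2}=\int_{D_i}\vert v\vert^{2}+\mathcal{O}(a^{2})$ from the second identity, gives
\begin{equation*}
\Big\vert\int_{D_i}v\Big\vert^{2}=a^{2}\Big(\int_{B}\bar e_{n_0}\Big)^{2}\int_{D_i}\vert v\vert^{2}+\mathcal{O}\big(a^{4}\vert\log(a)\vert^{h}\big),
\end{equation*}
the replacement of $\vert c\vert^{2}\Vert e_{n_0}\Vert^{2}$ costing only $a^{2}\big(\int_{B}\bar e_{n_0}\big)^{2}\cdot\mathcal{O}(a^{2})=\mathcal{O}(a^{4})$, which is absorbed in the error. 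This is $(\ref{equa56})$.

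The main obstacle is the sharp remainder bound $\Vert v^{\perp}\Vert=\mathcal{O}(a)$: the a priori estimate $(\ref{prioriest})$ alone only yields $\Vert v^{\perp}\Vert\lesssim a\,\vert\log(a)\vert^{h}$, a factor $\vert\log(a)\vert^{h}$ too large, which would degrade the error to $\mathcal{O}(a^{4}\vert\log(a)\vert^{2h})$. Recovering that factor amounts to isolating the resonant mode $e_{n_0}$ from the remainder of the spectrum of $A_{0}$, i.e. to quantifying its spectral gap under Hypotheses $\ref{hyp}$; this is precisely what the field estimates of Section $\ref{appendixlemma}$ provide, and I would invoke them here.
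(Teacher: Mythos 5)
Your proof is correct and follows essentially the same route as the paper: both decompose $v$ over the eigenbasis of $A_{0}$, use the near-resonance amplification of the $e_{n_{0}}$-coefficient together with the $\mathcal{O}(a)$ bound on the non-resonant remainder (which is exactly what the estimates of Section \ref{appendixlemma}, in particular $(\ref{serienneqn0})$ and $(\ref{equa825})$, supply), and then compare $\vert\int_{D_i}v\vert^{2}$ with $\int_{D_i}\vert v\vert^{2}$. The only cosmetic difference is that you eliminate the resonant coefficient between the two Parseval identities directly, whereas the paper substitutes the explicit resolvent formula $(\ref{equa825})$ into each side before comparing.
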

\begin{proof}
We split the proof into two steps.\\
Step 1: Estimation of $\vert \int_{D_{1}} v \, dy \vert^{2}$. \\
We use the same techniques as in the proof of the a priori estimation i.e proposition $\ref{abc}$. We have 
\begin{eqnarray*}
\int_{D_{1}} v \, dy &=& < v ; e^{(1)}_{n_{0}} > \; \int_{D} e_{n_{0}} \, dx + a^{2} \;  \sum_{n \neq n_{0} }  < \tilde{v} ; \overline{e}_{n} >  \; \int_{B} \overline{e}_{n} \, d\eta \\
& \stackrel{(\ref{W})}= & < v ; e^{(1)}_{n_{0}} >  \int_{D} e_{n_{0}} dx + \mathcal{O}(a^{2})  \sum_{n \neq n_{0}} \Bigg[\frac{< \tilde{u_{0}} ; \overline{e}_{n} >}{(1-\omega^{2}  \mu_{0}  \tau \lambda_{n})} + \mathcal{O}(\vert \log(a) \vert^{-h}) <1,\overline{e}_{n}>  \Bigg]  \int_{B} \overline{e}_{n}  d\eta. 
\end{eqnarray*} 
When the used frequency is not close to the resonance the following estimation holds
\begin{equation*}
\sum_{n \neq n_{0}} \Bigg[\frac{< \tilde{u_{0}} ; \overline{e}_{n} >}{(1-\omega^{2} \, \mu_{0} \, \tau \, \lambda_{n})} + \mathcal{O}(\vert \log(a) \vert^{-h}) <1,\overline{e}_{n}>  \Bigg] \; \int_{B} \overline{e}_{n} \, d\eta \sim \mathcal{O}(1),
\end{equation*}
and plug this in the previous equation to obtain
\begin{eqnarray*}
\int_{D_{1}} v \, dy & \stackrel{(\ref{equa825})}= & a^{2} \Bigg[ \frac{ < \tilde{u_{0}}; \overline{e}^{(1)}_{n_{0}} > }{(1-\omega^{2} \, \mu_{0} \, \tau \, \lambda_{n_{0}})-\omega^{2} \, \mu_{0} \,\tau \, a^{2} \, \Phi_{0} \Big(\int_{B} \overline{e}_{n_{0}}\Big)^{2}} + \mathcal{O}(1) \Bigg]  \; \int_{B} \overline{e}_{n_{0}} \, d\eta + \mathcal{O}(a^{2}). 
\end{eqnarray*}
Then
\begin{equation}\label{equa57}
\Big\vert \int_{D_{1}} v \, dy \Big\vert^{2} = a^{4}  \frac{ \vert < \tilde{u_{0}}; \overline{e}^{(1)}_{n_{0}} > \vert^{2}}{\Bigg\vert (1-\omega^{2} \, \mu_{0} \, \tau \, \lambda_{n_{0}})-\omega^{2} \, \mu_{0} \,\tau \, a^{2} \, \Phi_{0} \Big(\int_{B} \overline{e}_{n_{0}}\Big)^{2} \Bigg\vert^{2}}  \; \Big( \int_{B} \overline{e}_{n_{0}} \, d\eta  \Big)^{2} + \mathcal{O}(a^{4} \, \vert \log(a) \vert^{h}).
\end{equation}
Step 2: Estimation of $ \int_{D_{1}} \vert v \vert^{2} \, dy $. \\
We have
\begin{eqnarray*}
\int_{D_{1}} \vert v \vert^{2} \, dx &=& \sum_{n} \vert <v,e^{(1)}_{n}>  \vert^{2} = a^{2} \, \Big(\vert <\tilde{v_{1}},\overline{e}_{n_{0}}>  \vert^{2} + \sum_{n \neq n_{0}} \vert <\tilde{v_{1}},\overline{e}_{n}>  \vert^{2} \Big) \\
& = & a^{2} \, \vert <\tilde{v_{1}},\overline{e}_{n_{0}}>  \vert^{2} + \mathcal{O}(a^{2}) \\ 
& \stackrel{(\ref{equa825})}= & a^{2} \Bigg[\frac{ \vert < \tilde{u_{0}}; \overline{e}^{(1)}_{n_{0}} > \vert^{2}}{\Big\vert (1-\omega^{2} \, \mu_{0} \, \tau \, \lambda_{n_{0}})-\omega^{2} \, \mu_{0} \,\tau \, a^{2} \, \Phi_{0}(z_{1},z_{2}) \Big(\int_{B} \overline{e}_{n_{0}}\Big)^{2} \Big\vert^{2}} + \mathcal{O}(\vert \log(a) \vert^{h}) \Bigg] + \mathcal{O}(a^{2}).  
\end{eqnarray*}
Then
\begin{equation}\label{equa58}
\int_{D_{1}} \vert v \vert^{2} \, dx = a^{2} \frac{ \vert < \tilde{u_{0}}; \overline{e}^{(1)}_{n_{0}} > \vert^{2}}{\Bigg\vert (1-\omega^{2} \, \mu_{0} \, \tau \, \lambda_{n_{0}})-\omega^{2} \, \mu_{0} \,\tau \, a^{2} \, \Phi_{0}(z_{1},z_{2}) \Big(\int_{B} \overline{e}_{n_{0}}\Big)^{2} \Bigg\vert^{2}} + \mathcal{O}(a^{2} \, \vert \log(a) \vert^{h}). 
\end{equation}
Combining $(\ref{equa57})$ and $(\ref{equa58})$, we obtain
\begin{eqnarray*}
\Big\vert \int_{D_{1}} v \, dy \Big\vert^{2} &=& a^{4}  \frac{ \vert < \tilde{u_{0}}; \overline{e}^{(1)}_{n_{0}} > \vert^{2}}{\Bigg\vert (1-\omega^{2} \, \mu_{0} \, \tau \, \lambda_{n_{0}})-\omega^{2} \, \mu_{0} \,\tau \, a^{2} \, \Phi_{0}(z_{1},z_{2}) \Big(\int_{B} \overline{e}_{n_{0}}\Big)^{2} \Bigg\vert^{2}}  \; \Big( \int_{B} \overline{e}_{n_{0}} \, d\eta  \Big)^{2} + \mathcal{O}(a^{4} \, \vert \log(a) \vert^{h})  \\
&=& a^{2} \Bigg[ \int_{D_{1}} \vert v \vert^{2} \, dx +  \mathcal{O}(a^{2} \, \vert \log(a) \vert^{h}) \Bigg] \; \Big( \int_{B} \overline{e}_{n_{0}} \, d\eta  \Big)^{2}  + \mathcal{O}(a^{4} \, \vert \log(a) \vert^{h})  \\
&=& a^{2} \, \int_{D_{1}} \vert v \vert^{2} \, dx \; \Big( \int_{B} \overline{e}_{n_{0}} \, d\eta  \Big)^{2} + \mathcal{O}(a^{4} \, \vert \log(a) \vert^{h}),
\end{eqnarray*} 
which proves the formula $(\ref{equa56})$.
\end{proof}
We continue with equation $(\ref{equa55})$, then 
\begin{equation*}
\Bigg\vert \textbf{C}^{-1} - \Big( \Phi_{0}+2(- \log(k) / 2\pi +\Gamma)\Big) \Bigg\vert^{2} = \frac{\vert u_{0}(z_{2}) \vert^{2}}{\vert  \omega^{2} \; \mu_{0} \; \tau \vert^{2} \; \Big\vert \displaystyle\int_{D_{2}} v \, dy \Big\vert^{2}} +\mathcal{O}(d \, \vert \log(a) \vert^{2(1-h)}) 
\end{equation*}
\begin{eqnarray*}
\phantom{Invisible text} \quad \quad &\stackrel{(\ref{equa56})}=& \frac{\vert u_{0}(z_{2}) \vert^{2}}{\vert  \omega^{2} \; \mu_{0} \; \tau \vert^{2} \; \Big[ a^{2} \; \Big(\displaystyle \int_{B} \overline{e}_{n_{0}} dy \Big)^{2} \displaystyle\int_{D_{2}} \vert v \vert^{2} \, dy  + \mathcal{O}(a^{4} \vert \log(a) \vert^{h} ) \Big]} +\mathcal{O}(d \, \vert \log(a) \vert^{2(1-h)}) \\
&=& \frac{\vert u_{0}(z_{2}) \vert^{2}}{\vert \omega^{2} \; \mu_{0} \; \tau \vert^{2} \;  a^{2} \; \Big(\displaystyle \int_{B} \overline{e}_{n_{0}} dy \Big)^{2} \displaystyle\int_{D_{2}} \vert v \vert^{2} \, dy \Big[1 + \mathcal{O}(\vert \log(a) \vert^{-h} ) \Big]} +\mathcal{O}(d \, \vert \log(a) \vert^{2(1-h)}) \\
&=& \frac{\vert u_{0}(z_{2}) \vert^{2}}{\vert \omega^{2} \; \mu_{0} \; \tau \vert^{2} \;  a^{2} \; \Big(\displaystyle \int_{B} \overline{e}_{n_{0}} dy \Big)^{2} \displaystyle\int_{D_{2}} \vert v \vert^{2} \, dy } + \mathcal{O}(\vert \log(a) \vert^{2-3h}). 
\end{eqnarray*}
In the following proposition, we write an estimation of $\vert u_{0}(z_{2}) \vert$ in the case of one particle inside the domain.  
\begin{proposition}
We have
\begin{equation}\label{vVonelemma}
\vert u_{0}(z_{2}) \vert^{2} = \frac{\Big\vert 1-\omega^{2} \mu_{0} \tau \, \lambda_{n_{0}} -  \omega^{2} \mu_{0} \tau \, \Big( \frac{-1}{2\pi} \log(k)+\Gamma \Big) \,\Big( \int_{D} e_{n_{0}}  \Big)^{2} \Big\vert^{2}}{ \big(\int_{D} e_{n_{0}} \big)^{2}} \; \int_{D} \vert u_{1} \vert^{2} \, dx + \mathcal{O}\Big(\vert \log(a) \vert^{\max(-2h,-1)}\Big).
\end{equation}
\end{proposition}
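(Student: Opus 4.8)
The plan is to specialize the Foldy--Lax reduction already carried out for $M=2$ to the single--particle case $M=1$, and then to convert the resulting expression for $\int_{D} v\,dx$ into one for the measurable quantity $\int_{D}\vert v\vert^{2}\,dx=\int_{D}\vert u_{1}\vert^{2}\,dx$. First I would set $M=1$ in Proposition $(\ref{propositionas})$: the coupling sum $\sum_{j\neq m}$ disappears, so the algebraic system $(\ref{as})$ collapses to the scalar identity $Q = u_{0}(z) + \textit{Err}$, in which only the self--interaction contributions $S_{1},S_{2},S_{7},S_{8}$ survive (the terms $S_{3},\dots,S_{6}$ carry the factor $\sum_{j\neq m}$). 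By the estimates preceding $(\ref{as})$ this error is dominated by $\mathcal{O}(a\,\vert\log(a)\vert^{h})$. Unfolding the definitions of $Q$ and $\mathbf{C}^{\star}$ from $(\ref{R})$, this reads
\begin{equation*}
\int_{D} v\,dx=\frac{u_{0}(z)}{\omega^{2}\mu_{0}\tau\,(\mathbf{C}^{\star})^{-1}}+\textit{Err},\qquad \omega^{2}\mu_{0}\tau\,(\mathbf{C}^{\star})^{-1}=\omega^{2}\mu_{0}\tau\,\mathbf{C}^{-1}-\omega^{2}\mu_{0}\tau\Big(-\tfrac{1}{2\pi}\log(k)(z)+\Gamma\Big).
\end{equation*}

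Next I would substitute the expansion of $\mathbf{C}$ recorded in the statement of Theorem $\ref{Using-only-permittivity-contrast}$, namely $\mathbf{C}=\frac{\omega^{2}\mu_{0}\tau}{1-\omega^{2}\mu_{0}\tau\lambda_{n_{0}}}\big(\int_{D} e_{n_{0}}\big)^{2}+\mathcal{O}(\vert\log(a)\vert^{-1})$, so that $\omega^{2}\mu_{0}\tau\,\mathbf{C}^{-1}=\frac{1-\omega^{2}\mu_{0}\tau\lambda_{n_{0}}}{(\int_{D} e_{n_{0}})^{2}}+\dots$. Factoring $\big(\int_{D} e_{n_{0}}\big)^{-2}$ out of the bracket then gives
\begin{equation*}
\int_{D} v\,dx=\frac{u_{0}(z)\,\big(\int_{D} e_{n_{0}}\big)^{2}}{1-\omega^{2}\mu_{0}\tau\lambda_{n_{0}}-\omega^{2}\mu_{0}\tau\big(-\frac{1}{2\pi}\log(k)(z)+\Gamma\big)\big(\int_{D} e_{n_{0}}\big)^{2}}+\textit{Err}.
\end{equation*}
Taking moduli squared produces exactly the claimed denominator together with the numerator $\vert u_{0}(z)\vert^{2}\big(\int_{D} e_{n_{0}}\big)^{4}$.

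The last step is to replace $\vert\int_{D} v\,dx\vert^{2}$ by $\int_{D}\vert v\vert^{2}\,dx$ via the lemma $(\ref{equa56})$ at $i=1$, reconciling the $B$--normalization used there with the $D$--normalization of the statement through the rescaling $\int_{D} e_{n_{0}}\,dx=a\int_{B} e_{n_{0}}\,d\xi$ (the eigenfunctions of the self--adjoint operator $A_{0}$ may be taken real, so $\overline{e}_{n_{0}}=e_{n_{0}}$). After this substitution the factors $\big(\int_{D} e_{n_{0}}\big)^{4}/\big(a^{2}(\int_{B} e_{n_{0}})^{2}\big)$ collapse back to $\big(\int_{D} e_{n_{0}}\big)^{2}$, so the spare power of $a$ cancels and one is left with $\int_{D}\vert v\vert^{2}\,dx=\vert u_{0}(z)\vert^{2}\big(\int_{D} e_{n_{0}}\big)^{2}/\vert\cdots\vert^{2}+\textit{Err}$; solving for $\vert u_{0}(z)\vert^{2}$ and recalling $v=u_{1}$ yields $(\ref{vVonelemma})$.

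I expect the delicate point to be the bookkeeping of the error near resonance rather than any single computation. Since $\omega^{2}=\omega_{n_{0}}^{2}(1\pm\vert\log(a)\vert^{-h})$ forces $1-\omega^{2}\mu_{0}\tau\lambda_{n_{0}}\sim\vert\log(a)\vert^{-h}$, the denominator above has size $\vert\log(a)\vert^{-h}$ and its square has size $\vert\log(a)\vert^{-2h}$; when I invert to isolate $\vert u_{0}(z)\vert^{2}$ this small factor multiplies $\int_{D}\vert u_{1}\vert^{2}$, so every absolute error — the $\mathcal{O}(a\,\vert\log(a)\vert^{h})$ coming from $(\ref{as})$, the relative $\mathcal{O}(\vert\log(a)\vert^{-1})$ from the $\mathbf{C}$--expansion, and the $\mathcal{O}(a^{4}\vert\log(a)\vert^{h})$ from $(\ref{equa56})$ — must be re-expressed relatively and weighed against it. Sizing each contribution with the a priori bound $(\ref{prioriest})$ and $\tau\sim a^{-2}\vert\log(a)\vert^{-1}$, the two surviving scales are $\vert\log(a)\vert^{-2h}$ (from the resonance factor together with the modulus lemma) and $\vert\log(a)\vert^{-1}$ (from the $\mathbf{C}$--expansion), whose worst case is precisely the stated $\mathcal{O}\big(\vert\log(a)\vert^{\max(-2h,-1)}\big)$.
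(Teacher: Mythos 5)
Your route is genuinely different from the paper's, and as written it does not reach the stated error rate. The paper does not pass through the Foldy--Lax system or the coefficient $\textbf{C}$ for this proposition at all: it works directly with the Fourier coefficient $\langle u_{1},e_{n_{0}}\rangle$ via the spectral relation $(\ref{Z})$ of Proposition $\ref{Y}$, i.e. $(\ref{equa511})$, whose additive error $\mathcal{O}(a\,\vert\log(a)\vert^{h-1})$ is a \emph{relative} error $\mathcal{O}(\vert\log(a)\vert^{-1})$ against $\langle u_{1},e_{n_{0}}\rangle\sim a\,\vert\log(a)\vert^{h}$. Parseval then gives $(\ref{J})$ with additive error $\mathcal{O}(a^{2}\vert\log(a)\vert^{\max(0,2h-1)})$, and rescaling by $\Psi/(\int_{D}e_{n_{0}})^{2}\sim a^{-2}\vert\log(a)\vert^{-2h}$ produces exactly $\mathcal{O}(\vert\log(a)\vert^{\max(-2h,-1)})$.

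Your two conversion steps each cost a relative factor $\vert\log(a)\vert^{-h}$, and that is too much. First, to turn $\omega^{2}\mu_{0}\tau\,(\textbf{C}^{\star})^{-1}$ into the resonant denominator you must invert the expansion $(\ref{F})$ of $\textbf{C}$; its additive error $\mathcal{O}(\vert\log(a)\vert^{-1})$ against $\textbf{C}\sim\vert\log(a)\vert^{h-1}$ is a relative $\mathcal{O}(\vert\log(a)\vert^{-h})$, hence an uncontrolled $\mathcal{O}(\vert\log(a)\vert^{-2h})$ perturbation inside the bracket $1-\omega^{2}\mu_{0}\tau\lambda_{n_{0}}-\omega^{2}\mu_{0}\tau(-\tfrac{1}{2\pi}\log(k)+\Gamma)(\int_{D}e_{n_{0}})^{2}$, which near resonance is itself only of size $\vert\log(a)\vert^{-h}$. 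Second, the modulus-conversion lemma $(\ref{equa56})$ (which, incidentally, is stated and proved for the two-particle field $u_{2}$, so you need its one-particle analogue) carries an error $\mathcal{O}(a^{4}\vert\log(a)\vert^{h})$ against a main term of size $a^{4}\vert\log(a)\vert^{2h}$ --- again a relative $\mathcal{O}(\vert\log(a)\vert^{-h})$. Propagating either loss through the final division by $(\int_{D}e_{n_{0}})^{2}/\Psi$ leaves $\vert u_{0}(z)\vert^{2}=\cdots+\mathcal{O}(\vert\log(a)\vert^{-h})$, and since $-h>\max(-2h,-1)$ for every $h\in(0,1)$, this is strictly weaker than $(\ref{vVonelemma})$. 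The leading-order identity you derive is correct, but the claimed rate is not attained; to recover it you should bypass $\textbf{C}$ and the zeroth moment $\int_{D}v\,dx$ altogether and argue on $\langle u_{1},e_{n_{0}}\rangle$ as the paper does.
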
 
\begin{proof}
To fix notations recall L.S.E for one particle
\begin{equation*}
u_{1}(x) - \omega^{2} \, \mu_{0} \, \int_{D} G_{k}(x,y) \, (\varepsilon_{p}-\varepsilon_{0})(y) \, u_{1}(y) \, dy = u_{0}(x),  \qquad x \in D. 
\end{equation*}
With this notation the equation $(\ref{Z})$ takes the following form
\begin{equation}\label{equa511}
<u_{1};e_{n_{0}}> = \frac{ <u_{0};e_{n_{0}}>}{\Bigg[1-\omega^{2} \mu_{0} \tau \, \lambda_{n_{0}} -  \omega^{2} \mu_{0} \tau \, \Big( \frac{-1}{2\pi} \log(k)+\Gamma \Big) \,\Big( \int_{D} e_{n_{0}}  \Big)^{2}\Bigg]}  + \mathcal{O}(a\,\vert \log(a) \vert^{h-1}). 
\end{equation}
Next, 
\begin{eqnarray}\label{J}
\nonumber
\int_{D} \vert u_{1} \vert^{2} \, dx & = & \vert <u_{1} ;e_{n_{0}}> \vert^{2} +   a^{2} \, \sum_{n \neq n_{0} } \, \vert <\tilde{u}_{1} ; \overline{e}_{n}> \vert^{2} \\
& \stackrel{(\ref{equa511})}= &  \frac{\vert <u_{0};e_{n_{0}}> \vert^{2}}{\Big\vert 1-\omega^{2} \mu_{0} \tau \, \lambda_{n_{0}} -  \omega^{2} \mu_{0} \tau \, \Big( \frac{-1}{2\pi} \log(k)+\Gamma \Big) \,\Big( \int_{D} e_{n_{0}}  \Big)^{2} \Big\vert^{2}} \,  + \mathcal{O}\Big(a^{2} \, \vert \log(a) \vert^{\max(0,2h-1)}\Big). 
\end{eqnarray}
We develop $u_{0}$ near the point $z$ to obtain 
\begin{eqnarray*}
\int_{D} \vert u_{1} \vert^{2} \, dx & = & \frac{\Big[ \vert u_{0}(z_{2}) \vert^{2} \, \Big( \int_{D} e_{n_{0}} dx \Big)^{2} + \mathcal{O}(a^{3}) \Big]}{\Big\vert 1-\omega^{2} \mu_{0} \tau \, \lambda_{n_{0}} -  \omega^{2} \mu_{0} \tau \, \Big( \frac{-1}{2\pi} \log(k)+\Gamma \Big) \,\Big( \int_{D} e_{n_{0}}  \Big)^{2} \Big\vert^{2}} \,  +\mathcal{O}\Big(a^{2} \, \vert \log(a) \vert^{\max(0,2h-1)}\Big) \\
& = & \frac{\vert u_{0}(z_{2}) \vert^{2} \, \Big( \int_{D} e_{n_{0}} dx \Big)^{2}}{\Big\vert 1-\omega^{2} \mu_{0} \tau \, \lambda_{n_{0}} -  \omega^{2} \mu_{0} \tau \, \Big( \frac{-1}{2\pi} \log(k)+\Gamma \Big) \,\Big( \int_{D} e_{n_{0}}  \Big)^{2} \Big\vert^{2}} \,  + \mathcal{O}\Big(a^{2} \, \vert \log(a) \vert^{\max(0,2h-1)}\Big).
\end{eqnarray*}
This proves $(\ref{vVonelemma})$.
\end{proof}
In $(\ref{vVonelemma})$, we use the following notation
\begin{equation*}
\Psi :=  \Bigg\vert 1-\omega^{2} \mu_{0} \tau \, \lambda_{n_{0}} -  \omega^{2} \mu_{0} \tau \, \Big( \frac{-1}{2\pi} \log(k)+\Gamma \Big) \,\Big( \int_{D} e_{n_{0}}  \Big)^{2} \Bigg\vert^{2}.
\end{equation*}  
With this, we get
\begin{eqnarray*}
\left\vert \textbf{C}^{-1} - \bigg( \Phi_{0}+2 \bigg( \frac{-1}{2\pi}  \log(k) +\Gamma \bigg)\bigg) \right\vert^{2} &=& \frac{\vert u_{0}(z_{2}) \vert^{2}}{\vert \omega^{2} \; \mu_{0} \; \tau \vert^{2}  \; \Big(\displaystyle \int_{D} e_{n_{0}} dy \Big)^{2} \displaystyle\int_{D_{2}} \vert v \vert^{2} \, dy } + \mathcal{O}(\vert \log(a) \vert^{2-3h}) \\
&\stackrel{(\ref{vVonelemma})}=& \frac{\Psi \; \displaystyle \int_{D} \vert u_{1} \vert^{2} \, dx}{\vert \omega^{2} \; \mu_{0} \; \tau \vert^{2} \;  \Big(\displaystyle \int_{D} e_{n_{0}} dy \Big)^{4} \displaystyle\int_{D_{2}} \vert v \vert^{2} \, dy }  +   \mathcal{O}(\vert \log(a) \vert^{2-3h}).
\end{eqnarray*}
We set 
\begin{equation}\label{E}
B := \frac{\displaystyle \int_{D} \vert u_{1} \vert^{2} \, dx}{\vert \omega^{2} \; \mu_{0} \; \tau \vert^{2} \;  \Big(\displaystyle \int_{D} e_{n_{0}} dy \Big)^{4} \displaystyle\int_{D_{2}} \vert v \vert^{2} \, dy }.  
\end{equation}
Referring to $(\ref{phigamma})$, we set $\Gamma := \gamma + i/4 $. We develop the left side of the last equation as 
\begin{eqnarray*}
\left\vert \textbf{C}^{-1} - \Bigg[ \Phi_{0}+2 \Big( \frac{-1}{2 \pi} \log(k) +\Gamma\Big)\Bigg] \right\vert^{2} &=& \Big( \textbf{C}^{-1} - \Phi_{0}\Big)^{2} - 4 \, \Big( \textbf{C}^{-1} - \Phi_{0}\Big) \Bigg(\frac{-1}{2\pi} \log\vert k \vert + \gamma \Bigg) \\
& + & 4 \, \Bigg(\frac{-1}{2\pi} \log\vert k \vert + \gamma \Bigg)^{2} + 4 \, \Bigg(\frac{-1}{2\pi} Arg(k)  + \frac{1}{4} \Bigg)^{2},
\end{eqnarray*}
then, we have
\begin{eqnarray}\label{D}
\nonumber
\Big( \textbf{C}^{-1} - \Phi_{0}\Big)^{2} - 4 \, \Big( \textbf{C}^{-1} - \Phi_{0}\Big) \Bigg(\frac{-1}{2\pi} \log\vert k \vert + \gamma \Bigg) 
 &+& 4 \, \Bigg(\frac{-1}{2\pi} \log\vert k \vert + \gamma \Bigg)^{2} + 4 \, \Bigg(\frac{-1}{2\pi} Arg(k)  + \frac{1}{4} \Bigg)^{2} \\ &=& \Psi \, B + \mathcal{O}(\vert \log(a) \vert^{2-3h}).
\end{eqnarray}
Remark that $\Psi$ can be written as
\begin{eqnarray*}
\Psi &=& \Big\vert 1-\omega^{2} \mu_{0} \tau \, \lambda_{n_{0}}  \Big\vert^{2} + (\omega^{2} \mu_{0})^{2} \, \vert \tau \vert^{2} \, \Big(\int_{D} e_{n_{0}}\Big)^{4} \,\left[ \Bigg( \frac{-1}{2\pi} \log\vert k \vert + \gamma \Bigg)^{2} + \Bigg( \frac{-1}{2\pi} Arg(k) + \frac{1}{4} \Bigg)^{2} \right] \\ 
&-& 2 \omega^{2} \mu_{0} \Big(\int_{D} e_{n_{0}}\Big)^{2} \Bigg( \frac{-1}{2\pi} \log\vert k \vert + \gamma \Bigg) \, \Re\Big[\overline{\tau} \; (1-\omega^{2}\mu_{0}\tau \lambda_{n_{0}}) \Big]+\mathcal{O}(a^{2}). 
\end{eqnarray*}
Hence using $(\ref{C})$, we have
\begin{eqnarray*}
\Psi &=& \textbf{C}^{-2} \, (\omega^{2} \, \mu_{0})^{2} \,  \vert \tau \vert^{2} \, \Big( \int_{D} e_{n_{0}} \Big)^{4} + (\omega^{2} \mu_{0})^{2} \, \vert \tau \vert^{2} \, \Big(\int_{D} e_{n_{0}}\Big)^{4} \, \left[ \Bigg( \frac{-1}{2\pi} \log\vert k \vert + \gamma \Bigg)^{2} +  \, \Bigg( \frac{-1}{2\pi} Arg(k) + \frac{1}{4} \Bigg)^{2} \right] \\ 
&-&  2 \, \textbf{C}^{-1} \, (\omega^{2} \mu_{0})^{2} \, \vert \tau \vert^{2} \, \Big(\int_{D} e_{n_{0}}\Big)^{4} \Bigg( \frac{-1}{2\pi} \log\vert k \vert + \gamma \Bigg) + \mathcal{O}\big(\vert \log(a) \vert^{-3h}\big). 
\end{eqnarray*}
Replace $\Psi$ in $(\ref{D})$ and use the fact that $B = \mathcal{O}(\vert \log(a) \vert^{2})$ to cancel all the terms of order $\mathcal{O}(1)$. The formula $(\ref{D})$ will be
\begin{equation*}
\Big( \textbf{C}^{-1} - \Phi_{0}\Big)^{2} - 4 \, \Big( \textbf{C}^{-1} - \Phi_{0}\Big) \Bigg(\frac{-1}{2\pi} \log\vert k \vert + \gamma \Bigg)  =  - 2 \, \textbf{C}^{-1} \, (\omega^{2} \mu_{0})^{2} \, \vert \tau \vert^{2} \, \Big(\int_{D} e_{n_{0}}\Big)^{4} \Bigg( \frac{-1}{2\pi} \log\vert k \vert + \gamma \Bigg) \, B 
\end{equation*}
\begin{equation*}
\phantom{invisible text}   \qquad + \qquad  \textbf{C}^{-2} \, (\omega^{2} \, \mu_{0})^{2} \,  \vert \tau \vert^{2} \, \Big( \int_{D} e_{n_{0}} \Big)^{4} B + \mathcal{O}\big(\vert \log(a) \vert^{\max(0,2-3h)}\big).
\end{equation*}
Then
\begin{eqnarray*}
\Bigg(\frac{-1}{2\pi} \log\vert k \vert + \gamma \Bigg) \Bigg[ - 4 \, \Big( \textbf{C}^{-1} - \Phi_{0}\Big) +  2 \, \textbf{C}^{-1} \, (\omega^{2} \mu_{0})^{2} \, \vert \tau \vert^{2} \, \Big(\int_{D} e_{n_{0}}\Big)^{4}  \, B  \Bigg] & = &   \textbf{C}^{-2} \, (\omega^{2} \, \mu_{0})^{2} \,  \vert \tau \vert^{2} \, \Big( \int_{D} e_{n_{0}} \Big)^{4} B 
\end{eqnarray*}
\begin{eqnarray*}
\phantom{invisible text} \qquad \qquad \qquad \qquad \qquad \qquad \qquad \qquad \qquad &-& \Big( \textbf{C}^{-1} - \Phi_{0}\Big)^{2} + \mathcal{O}\big(\vert \log(a) \vert^{\max(0,2-3h)}\big).
\end{eqnarray*}
Using $(\ref{E})$, we get an explicit expression 
\begin{equation}\label{Kds}
\log\vert k \vert = 2 \pi \, \gamma - \frac{\pi}{\textbf{C}} \; \frac{\frac{\displaystyle \int_{D} \vert u_{1} \vert^{2} \, dx}{ \displaystyle\int_{D_{2}} \vert v \vert^{2} \, dx } - (1 - \textbf{C} \, \Phi_{0})^{2}}{ \frac{\displaystyle \int_{D} \vert u_{1} \vert^{2} \, dx}{ \displaystyle\int_{D_{2}} \vert v \vert^{2} \, dx } - 2 \, (1 - \textbf{C} \, \Phi_{0})} + \mathcal{O}\Big(\vert \log(a) \vert^{\max(h-1;1-2h)}\Big).
\end{equation}
\begin{remark}
To justify that $(\ref{Kds})$ is well defined, we use $(\ref{J}), (\ref{equa58})$ and $(\ref{C})$ to obtain the following relation 
\begin{equation*}
\frac{\displaystyle\int_{D} \vert u_{1} \vert^{2} \, dx}{\displaystyle \int_{D_{2}} \vert v \vert^{2} \, dx } = \frac{\Big( 1 - \textbf{C} \, \Phi_{0} \Big)^{2}}{\Bigg\vert 1 - \textbf{C} \, \Big(\frac{-1}{2\pi} \, \log(k) + \Gamma \Big) \Bigg\vert^{2}} + \; \mathcal{O}\Big(\vert \log(a) \vert^{\max(h-1;-h)}\Big).
\end{equation*}
Hence, 
\begin{equation*}
\frac{\frac{\displaystyle \int_{D} \vert u_{1} \vert^{2} \, dx}{ \displaystyle\int_{D_{2}} \vert v \vert^{2} \, dx } - (1 - \textbf{C} \, \Phi_{0})^{2}}{ \frac{\displaystyle \int_{D} \vert u_{1} \vert^{2} \, dx}{ \displaystyle\int_{D_{2}} \vert v \vert^{2} \, dx } - 2 \, (1 - \textbf{C} \, \Phi_{0})} = \frac{\Big(1 - \textbf{C} \, \Phi_{0} \Big) \, \textbf{C} \, \Bigg\lbrace 2 \, \Re\Big[ \frac{-1}{2\pi} \, \log(k) + \Gamma \Big] - \textbf{C} \, \Big\vert \frac{-1}{2\pi} \, \log(k) + \Gamma \Big\vert^{2} \Bigg\rbrace}{- 1 - \textbf{C} \, \Phi_{0} +2 \, \textbf{C} \, \Bigg\lbrace 2 \, \Re\Big[ \frac{-1}{2\pi} \, \log(k) + \Gamma \Big] - \textbf{C} \, \Big\vert \frac{-1}{2\pi} \, \log(k) + \Gamma \Big\vert^{2} \Bigg\rbrace} \thicksim \mathcal{O}(C).
\end{equation*}
\end{remark}
Therefore the error term in $(\ref{Kds})$ is indeed negligible as soon as $\frac{1}{2} < h <1$. \\  
Taking the exponential in both side of $(\ref{Kds})$ and using the smallness of $\mathcal{O}\Big(\vert \log(a) \vert^{\max(1-2h,h-1)}\Big)$, we write 
\begin{equation*}\label{equa516}
\vert k \vert  =  \exp\left\lbrace  2 \pi \, \gamma - \frac{\pi}{\textbf{C}} \; \frac{\frac{\displaystyle \int_{D} \vert u_{1} \vert^{2} \, dx}{ \displaystyle\int_{D_{2}} \vert v \vert^{2} \, dx } - (1 - \textbf{C} \, \Phi_{0})^{2}}{ \frac{\displaystyle \int_{D} \vert u_{1} \vert^{2} \, dx}{ \displaystyle\int_{D_{2}} \vert v \vert^{2} \, dx } - 2 \, (1 - \textbf{C} \, \Phi_{0})}  \right\rbrace +  \mathcal{O}\Big(\vert \log(a) \vert^{\max(h-1;1-2h)}\Big).
\end{equation*}

\bigskip
\section{Proof of Theorem \ref{Using-both-permittivity-and-conductivity-contrasts}}\label{Theo-using-both-contrasted-permit-conduct}
We recall the model problem for photo-acoustic imaging: 
\begin{equation}\label{equa91}
\left\{
\begin{array}{rll}
    \partial^{2}_{t} p(x,t) -  \Delta_{x} p(x,t) &=& 0 \qquad in \quad \mathbb{R}^{2} \times \mathbb{R}^{+},\\
    p(x,0) &=& \frac{\omega \, \beta_{0}}{c_{p}} \Im(\varepsilon)(x) \, \vert u \vert^{2}(x) \,\chi_{\Omega}  , \qquad in \quad \mathbb{R}^{2} \\ 
    \partial_{t}p(x,0) &=& 0 \qquad in \quad \mathbb{R}^{2}. 
    \end{array}
\right.
\end{equation}
\begin{remark}\label{FGH}
Next, when we solve the equation $(\ref{equa91})$, we omit the multiplicative term\footnote{The constant 2$\pi$ in the denominator comes from the Poisson formula.} $\frac{\omega \, \beta_{0}}{2\pi \, c_{p}}$. 
\end{remark}
\subsection{Photo-acoustic imaging using one particle. }\label{opsubsection}
Proof of $(\ref{pressure-to-v_0})$.
\
\\ The next lemma gives an estimation of the total field for $x \in \Omega \setminus D$.
\begin{lemma}
The total field behaves as
\begin{equation}\label{awayD}
\vert u_{1}(x) \vert^{2} = \mathcal{O}(1) + \mathcal{O}(\vert \log(a) \vert^{h-1} \, \vert \log(dist) \vert)  \quad  in  \quad \Omega \setminus D,
\end{equation}
where $dist = dist(x,D)$.
\end{lemma}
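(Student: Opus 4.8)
The plan is to read the estimate off directly from the Lippmann--Schwinger equation $(\ref{O})$ specialized to a single particle ($M=1$, $D:=D_1$) and evaluated at an exterior point $x\in\Omega\setminus D$, where it reads
\begin{equation*}
u_1(x)=u_0(x)+\omega^2\,\mu_0\,\tau\int_D G_k(x,y)\,u_1(y)\,dy .
\end{equation*}
Since $u_0$ is the smooth background field, independent of $a$, the first term contributes $\mathcal{O}(1)$ uniformly on $\Omega$, so the whole content of the lemma is to show that the volume potential over $D$ is $\mathcal{O}(\vert\log(a)\vert^{h-1}\,\vert\log(dist)\vert)$.

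First I would bound the potential by Cauchy--Schwarz,
\begin{equation*}
\Bigl\vert\int_D G_k(x,y)\,u_1(y)\,dy\Bigr\vert\le \Vert G_k(x,\cdot)\Vert_{\mathbb{L}^2(D)}\;\Vert u_1\Vert_{\mathbb{L}^2(D)} ,
\end{equation*}
and estimate the two factors separately. For the field factor I use the a priori bound $(\ref{prioriest})$ of Proposition \ref{abc}, namely $\Vert u_1\Vert_{\mathbb{L}^2(D)}\le\vert\log(a)\vert^h\,\Vert u_0\Vert_{\mathbb{L}^2(D)}$; since $u_0$ is smooth and $\vert D\vert\sim a^2$, this gives $\Vert u_1\Vert_{\mathbb{L}^2(D)}\lesssim a\,\vert\log(a)\vert^h$. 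For the kernel factor I insert the expansion $(\ref{Gkexpansion})$: the constant, the $\log(k(y))$ and the $\mathcal{O}(\vert x-y\vert\log\vert x-y\vert)$ terms are all bounded for $x,y\in\overline\Omega$, while the leading singular part is $\tfrac{-1}{2\pi}\log\vert x-y\vert$. For $y\in D$ one has $dist\le\vert x-y\vert\le\diam(\Omega)$, so $\vert\log\vert x-y\vert\vert\lesssim\vert\log(dist)\vert$, and integrating over $D$ yields $\Vert G_k(x,\cdot)\Vert_{\mathbb{L}^2(D)}\lesssim a\,\vert\log(dist)\vert$.

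Multiplying the two bounds and using $\vert\tau\vert\sim a^{-2}\vert\log(a)\vert^{-1}$ I obtain
\begin{equation*}
\Bigl\vert\omega^2\mu_0\tau\int_D G_k(x,y)u_1(y)\,dy\Bigr\vert\lesssim a^{-2}\vert\log(a)\vert^{-1}\cdot a\,\vert\log(dist)\vert\cdot a\,\vert\log(a)\vert^{h}=\vert\log(a)\vert^{h-1}\vert\log(dist)\vert ,
\end{equation*}
hence $u_1(x)=\mathcal{O}(1)+\mathcal{O}(\vert\log(a)\vert^{h-1}\vert\log(dist)\vert)$. Squaring and expanding $\vert u_1\vert^2=\vert u_0\vert^2+2\Re(u_0\overline{E})+\vert E\vert^2$, with $E$ the error term, the cross term is exactly of the claimed order; the only remaining point is the quadratic term $\vert E\vert^2=\mathcal{O}(\vert\log(a)\vert^{2(h-1)}\vert\log(dist)\vert^2)$.

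The main obstacle is precisely to absorb this quadratic term into $\mathcal{O}(\vert\log(a)\vert^{h-1}\vert\log(dist)\vert)$. Since $h<1$ we have $\vert\log(a)\vert^{h-1}\to0$, so the absorption is legitimate as long as $\vert\log(a)\vert^{h-1}\vert\log(dist)\vert\lesssim1$, i.e. $dist\ge\exp(-\vert\log(a)\vert^{1-h})$ — exactly the distance regime imposed elsewhere in the paper. Carefully tracking these competing logarithmic scales is the only delicate part; the rest is a routine application of Cauchy--Schwarz together with the already-established a priori estimate $(\ref{prioriest})$ and the kernel expansion $(\ref{Gkexpansion})$.
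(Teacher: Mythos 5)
Your proposal is correct and follows essentially the same route as the paper: write the Lippmann--Schwinger equation at an exterior point, bound the volume potential by Cauchy--Schwarz using the a priori estimate $(\ref{prioriest})$ for $\Vert u_1\Vert_{\mathbb{L}^2(D)}$ and the bound $\Vert G_k(x,\cdot)\Vert_{\mathbb{L}^2(D)}\lesssim a\,\vert\log(dist)\vert$, and combine with $\vert\tau\vert\sim a^{-2}\vert\log(a)\vert^{-1}$. Your explicit discussion of absorbing the quadratic term after squaring is in fact slightly more careful than the paper, which passes from the bound on $\vert u_1(x)\vert$ to the one on $\vert u_1(x)\vert^2$ without comment.
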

\begin{proof}
We use L.S.E
\begin{equation}\label{LSE1p}
u_{1}(x) = u_{0}(x) + \omega^{2} \, \mu_{0} \, \int_{D} (\varepsilon_{p}-\varepsilon_{0})(y) G_{k}(y,x) u_{1}(y) dy, \qquad \qquad x \in \mathbb{R}^{2}.
\end{equation}
Now, for $x$ away from $D$
\begin{eqnarray*}
\vert u_{1}(x) \vert & \leq & \vert u_{0}(x) \vert + \mathcal{O}\Bigg( \frac{1}{a^{2}\, \vert \log(a) \vert} \int_{D}  \vert G_{0} \vert (y,x) \; \vert u_{1}(y) \vert \; dy \Bigg) \\
& = & \mathcal{O}(1) + \mathcal{O}\Bigg( \frac{1}{a^{2}\, \vert \log(a) \vert} \; \Vert u_{1} \Vert \Big[ \int_{D}  \vert G_{0} \vert^{2} (y,x)  \; dy\Big]^{1/2} \Bigg)  =  \mathcal{O}(1) + \mathcal{O}\Big( \vert \log(a) \vert^{h-1} \, \vert \log(dist) \vert  \Big) 
\end{eqnarray*}
This proves $(\ref{awayD})$. 
\end{proof}
Let us recall from proposition $(\ref{Y})$, the following relation
\begin{equation}\label{ve=Ve}
<u_{1},e_{n_{0}}> = \frac{1}{[1-\omega^{2} \, \mu_{0} \, \tau \, \lambda_{n_{0}}]} \; <u_{0};e_{n_{0}}> + \mathcal{O}(a \, \vert \log(a) \vert^{2h-1}).
\end{equation}
We use Poisson's formula to solve the system $(\ref{equa91})$, see (\cite{PinchoverRubinstein}, Chapter 9), to represent the pressure as follows
\begin{eqnarray*}
p(t,x) &=& \partial_{t} \int_{\vert x-y \vert <t} \frac{(\Im(\varepsilon_{p}) \vert u_{1} \vert^{2})(y)}{\sqrt{t^{2}-\vert x-y \vert^{2}}} \chi_{D} dy + \partial_{t} \int_{\vert x-y \vert <t} \frac{(\Im(\varepsilon_{0}) \vert u_{1} \vert^{2})(y)}{\sqrt{t^{2}-\vert x-y \vert^{2}}} \chi_{\Omega \setminus D} dy \\
& = & \partial_{t} \int_{\vert x-y \vert <t} \frac{(\Im(\varepsilon_{p} - \varepsilon_{0}) \vert u_{1} \vert^{2})(y)}{\sqrt{t^{2}-\vert x-y \vert^{2}}} \chi_{D} dy + \partial_{t} \int_{\vert x-y \vert <t} \frac{(\Im(\varepsilon_{0}) \vert u_{1} \vert^{2})(y)}{\sqrt{t^{2}-\vert x-y \vert^{2}}} \chi_{\Omega} dy.
\end{eqnarray*}
Let $t > diam(\Omega)$. For $x \in \partial \Omega$, the representation above reduces to:  
\begin{equation*}
p(t,x)  =  \int_{D(z,a)} \partial_{t} \, \frac{(\Im(\varepsilon_{p} - \varepsilon_{0}) \vert u_{1} \vert^{2})(y)}{\sqrt{t^{2}-\vert x-y \vert^{2}}} dy + \int_{\Omega} \partial_{t} \frac{(\Im(\varepsilon_{0}) \vert u_{1} \vert^{2})(y)}{\sqrt{t^{2}-\vert x-y \vert^{2}}}  dy. 
\end{equation*}
Set $T_{4}$ to be 
\begin{equation*}
T_{4} := \int_{\Omega} \partial_{t} \frac{(\Im(\varepsilon_{0}) \vert u_{1} \vert^{2})(y)}{\sqrt{t^{2}-\vert x-y \vert^{2}}}  dy.
\end{equation*}
Recalling that $\tau := \varepsilon_{p} - \varepsilon_{0}(z)$, we have
\begin{equation*}
p(t,x)  =  -t \, \Im(\tau) \, \int_{D(z,a)} \frac{\, \vert u_{1} \vert^{2}(y)}{(t^{2}-\vert x-y \vert^{2})^{3/2}} dy +  T_{4} +    \int_{D(z,a)}\vert u_{1} \vert^{2}(y) \;\; \partial_{t} \frac{\Im\Big(\int_{0}^{1}(y-z) \centerdot \nabla \varepsilon_{0}(z+s(y-z))ds\Big)}{\sqrt{t^{2}-\vert x-y \vert^{2}}}  dy   
\end{equation*}
We estimate the remainder term as follows
\begin{equation}\label{lb} 
\Bigg\vert \int_{D(z,a)}\vert u_{1} \vert^{2}(y) \;\; \partial_{t} \frac{\Im\Big(\int_{0}^{1}(y-z) \centerdot \nabla \varepsilon_{0}(z+s(y-z))ds\Big)}{\sqrt{t^{2}-\vert x-y \vert^{2}}}  dy \Bigg\vert \\ 
 \leq  a \, \Vert u_{1} \Vert^{2}_{\mathbb{L}^{2}(D)} = \mathcal{O}(a^{3} \; \vert \log(a) \vert^{2h}), 
\end{equation}
then
\begin{equation*}
p(t,x)  =  -t \, \Im(\tau) \, \int_{D(z,a)} \frac{\, \vert u_{1} \vert^{2}(y)}{(t^{2}-\vert x-y \vert^{2})^{3/2}} dy + T_{4} + \mathcal{O}(a^{3} \; \vert \log(a) \vert^{2h}).
\end{equation*}
By Taylor expansion of the function $\Big(t^{2} - \vert x-\centerdot \vert^{2} \Big)^{-3/2}$ near $z$, we have
\begin{eqnarray*}
p(t,x)  &=&  \frac{-t \, \Im(\tau)}{(t^{2}-\vert x-z \vert^{2})^{3/2}}  \, \int_{D(z,a)} \vert u_{1} \vert^{2}(y) dy + T_{4} \\ 
&+& \mathcal{O}(a^{3} \; \vert \log(a) \vert^{2h}) + \mathcal{O}\Big[ \, \Im(\tau) \, \int_{D(z,a)} (\vert y-z \vert^{2} + 2 <x-z;z-y>) \vert u_{1} \vert^{2}(y) dy \Big].  
\end{eqnarray*}
We estimate the remainder term as 
\begin{equation}\label{hdd}
\bigg\vert \Im(\tau) \int_{D} (\vert y-z \vert^{2} + 2 <x-z;z-y>) \vert u_{1} \vert^{2}(y) dy \bigg\vert  \leq   \Im(\tau) \, a \, \Vert u_{1} \Vert^{2} = \mathcal{O}(\Im(\tau)  a^{3}  \vert \log(a) \vert^{2h}), 
\end{equation}
and then
\begin{equation*}
p(t,x) = \frac{-t \, \Im(\tau)}{(t^{2}-\vert x-z \vert^{2})^{3/2}}  \, \int_{D(z,a)} \vert u_{1} \vert^{2}(y) dy + T_{4} + \mathcal{O}\big(a^{3} \;  \Im(\tau) \; \vert \log(a) \vert^{2h}\big).
\end{equation*}
Writing $u_{1}$ as a Fourier series over the basis $\big(e_{n}\big)_{n \in \mathbb{N}}$, we obtain  
\begin{equation*}
p(t,x)  =  \frac{-t \, \Im(\tau) \, \vert <u_{1};e_{n_{0}}> \vert^{2}}{(t^{2}-\vert x-z \vert^{2})^{3/2}}  \,  - \frac{t \, \Im(\tau)}{(t^{2}-\vert x-z \vert^{2})^{3/2}}  \, \sum_{n \neq n_{0}} \vert <u_{1};e_{n}> \vert^{2} + T_{4} + \mathcal{O}\big(a^{3} \;  \Im(\tau) \; \vert \log(a) \vert^{2h}\big), 
\end{equation*}
since $n \neq n_{0}$ we estimate the series as
\begin{equation}\label{oyaya}
\mathcal{O}\big( \Im(\tau) \, \sum_{n \neq n_{0}} \vert <u_{1};e_{n}> \vert^{2} \big) \sim \mathcal{O}\big( \Im(\tau) \, \Vert u_{0} \vert_{\mathbb{L}^{2}(D)}^{2} \big) = \mathcal{O}\big( \Im(\tau) \, a^{2} \big).
\end{equation}
Next,
\begin{eqnarray*}
p(t,x) & \stackrel{(\ref{ve=Ve})}= & \frac{-t \, \Im(\tau)}{(t^{2}-\vert x-z \vert^{2})^{3/2}}  \, \Bigg[ \frac{\vert <u_{0};e_{n_{0}}> \vert^{2}}{\vert 1- \omega^{2} \mu_{0} \lambda_{n_{0}} \tau \vert^{2}} \,  + \mathcal{O}(a^{2} \, \vert \log(a) \vert^{3h-1}) \Bigg] + T_{4} + \mathcal{O}\big( \Im(\tau) \, a^{2} \big),
\end{eqnarray*}
hence
\begin{equation}\label{P(t,x)} 
p(t,x) = \frac{-t \, \Im(\tau)}{(t^{2}-\vert x-z \vert^{2})^{3/2}}  \, \frac{\vert <u_{0};e_{n_{0}}> \vert^{2}}{\vert 1- \omega^{2} \mu_{0} \lambda_{n_{0}} \tau \vert^{2}} \,   + T_{4} + \mathcal{O}(\Im(\tau) \, a^{2}) + \mathcal{O}(\Im(\tau) \, a^{2} \, \vert \log(a) \vert^{3h-1}).
\end{equation}
In order to calculate the terme $T_{4}$, we use L.S.E
\begin{equation*}
u_{1}(x) - \omega^{2} \, \mu_{0} \int_{D} (\varepsilon_{p}-\varepsilon_{0}(\eta)) G_{k}(x,\eta) \, u_{1}(\eta)  d\eta = u_{0}(x) \; \quad in \quad \Omega,
\end{equation*}
and define
\begin{equation}\label{p0np}
p_{0}(t,x) := \int_{\Omega} \, \partial_{t} \, \frac{1}{\sqrt{t^{2}-\vert x-y \vert^{2}}} \; \Im(\varepsilon_{0})(y) \; \vert u_{0} \vert^{2}(y) \,  dy.  
\end{equation} 
Observe that $p_{0}(t,x)$ is the measured pressure at point $x \in \partial \Omega$ and time $t$ when no particle is inside $\Omega$. \\
We set
\begin{equation*}
f := \partial_{t} \, \frac{1}{\sqrt{t^{2}-\vert x-y \vert^{2}}} \; \Im(\varepsilon_{0})(y).
\end{equation*} 
With this, we get 
\begin{equation*}
T_{4} = \int_{\Omega} \, \partial_{t} \, \frac{1}{\sqrt{t^{2}-\vert x-y \vert^{2}}} \; \Im(\varepsilon_{0})(y) \; \vert u_{1} \vert^{2}(y) \,  dy  =  \int_{\Omega \setminus D} \, f \, \vert u_{1} \vert^{2}(y) \,  dy + \int_{D} \, f \, \vert u_{1} \vert^{2}(y) \,  dy.
\end{equation*}
If we compare $(\ref{aprioriestimation})$ to $(\ref{awayD})$ we deduce that the term
$(\star) := \int_{D} \, f \, \vert u_{1} \vert^{2}(y) \,  dy$
is less dominant than the one given on $\Omega \setminus D$. Now,
since $f$ is smooth we can estimate $(\star)$, with help of a priori estimation, as 
\begin{equation*}
\vert (\star) \vert = \Big\vert \int_{D} \, f \, \vert u_{1} \vert^{2}(y) \,  dy \Big\vert \leq \Vert u_{1} \Vert^{2} = \mathcal{O}\big(a^{2} \, \vert \log(a) \vert^{2h} \big),
\end{equation*}
and, from L.S.E, see for instance $( \ref{LSE1p})$, we can rewrite $T_{4}$ as 
\begin{eqnarray*}\label{defS3}
T_{4} & = & p_{0}(t,x)  - \int_{D} f \vert u_{0} \vert^{2}(y) dy + (\omega^{2} \mu_{0} )^{2} \int_{\Omega \setminus D} f \Big\vert \int_{D}(\varepsilon_{p}-\varepsilon_{0}(\eta)) G_{k}(\eta,y)  u_{1}(\eta)  d\eta \Big\vert^{2} dy \\ 
&+& 2 \; \omega^{2}  \mu_{0}   \Re\Bigg[\int_{\Omega \setminus D} f \overline{u}_{0}(y)  \int_{D} (\varepsilon_{p}-\varepsilon_{0}(\eta)) G_{k}(\eta,y)  u_{1}(\eta)  d\eta dy \Bigg] + \mathcal{O}\big( a^{2} \, \vert \log(a) \vert^{2h} \big). 
\end{eqnarray*}
The smoothness of $u_{0}$ is enough to justify the following estimation 
\begin{equation*}
\Big\vert \int_{D} f \vert u_{0} \vert^{2}(y) dy \Big\vert  \sim \mathcal{O}\big( a^{2} \big).
\end{equation*}
To finish the estimation of $T_{4}$ we still have to deal with two terms. More exactly we set  
\begin{equation}\label{S3}
S_{3} :=   \int_{\Omega \setminus D} \, f \; \Big\vert \int_{D}(\varepsilon_{p}-\varepsilon_{0}(\eta)) G_{k}(\eta,y) \, u_{1}(\eta)  d\eta \Big\vert^{2} dy. 
\end{equation}
Expanding $(\varepsilon_{p}-\varepsilon_{0}(.))$ near $z$, we obtain 
\begin{eqnarray*}
\vert S_{3} \vert & \leq & \vert \tau \vert^{2} \,  \int_{\Omega \setminus D} \, \big\vert f \big\vert \; \bigg(  \int_{D} \big\vert G_{k}(\eta,y) \, u_{1}(\eta) \big\vert d\eta \bigg)^{2} dy \\ 
&+&   \int_{\Omega \setminus D} \, \big\vert f \big\vert \; \bigg( \int_{D} \bigg\vert \int_{0}^{1} (z - \eta )\centerdot \nabla \varepsilon_{0}(z+s(\eta-z))ds G_{k}(\eta,y) \, u_{1}(\eta) \bigg\vert d\eta \bigg)^{2} dy  \\
& + &  2  \;  \int_{\Omega \setminus D} \, \big\vert f \;\big\vert \; \bigg\vert \Re\bigg[\overline{\tau} \, \int_{D} \overline{G_{k}}(\eta,y) \, \overline{u_{1}}(\eta) \, d\eta \int_{D} \int_{0}^{1} (z-\eta)\centerdot \nabla \varepsilon_{0}(z+s(\eta - z))ds G_{k}(\eta,y) \, u_{1}(\eta)  d\eta \bigg]\bigg\vert dy, 
\end{eqnarray*}
then apply Cauchy Schwartz inequality and exchange the integration variables to obtain 
\begin{equation*}
\vert S_{3} \vert \leq  \vert \tau \vert^{2} \, \Vert u_{1} \Vert^{2} \int_{D} J(\eta) \; d\eta + \mathcal{O}(a^{2}) \; \Vert u_{1} \Vert^{2} \int_{D}  J(\eta) \; d\eta +  \mathcal{O}(a \; \tau) \; \Vert u_{1} \Vert^{2} \int_{D}  J(\eta) \; d\eta 
 \lesssim  \vert \tau \vert^{2} \, \Vert u_{1} \Vert^{2} \int_{D} J(\eta) \; d\eta ,
\end{equation*}
where $J$ is the function given by \quad
$J(\eta) := \int_{\Omega \setminus D} \big\vert f \big\vert \; \big\vert G_{k}(\eta,y) \big\vert^{2} \, \, dy.$ \\
Remark that $J$ is a smooth function because $f$ is a smooth and  $\eta$ and $y$ are in two disjoint domains. 
Then
\begin{equation*}
S_{3} = \mathcal{O}(\vert \log(a) \vert^{2h-2}).
\end{equation*}
The last term to estimate, that we set as $S_{4}$, is more delicate. We split it as:
\begin{eqnarray*}
S_{4} &:=& 2 \, \omega^{2} \mu_{0} \, \Re \,\Bigg[ \int_{\Omega \setminus D} \, f \; \; \overline{u}_{0}(y) \, \int_{D} (\varepsilon_{p}-\varepsilon_{0}(\eta)) G_{k}(\eta,y) \, u_{1}(\eta) \, d\eta\,  dy \Bigg] \\
 & = &  2 \, \omega^{2} \mu_{0} \, \sum_{n}  \Re \,\Bigg[ <u_{1};e_{n}> \, \tau \, \int_{\Omega \setminus D} \, f \;  \; \overline{u}_{0}(y) \, \int_{D}  G_{k}(\eta,y) \, e_{n}(\eta) \, d\eta\,  dy \Bigg] \\
 & - & 2 \, \omega^{2} \mu_{0} \,   \Re \,\Bigg[  \, \int_{\Omega \setminus D} \, f \; \; \overline{u}_{0}(y) \, \, \int_{D} \, \int_{0}^{1} (z-\eta ) \centerdot \nabla \varepsilon_{0}(z+s(\eta -z)) ds\, G_{k}(\eta,y) \, u_{1}(\eta) \, d\eta\,  dy \Bigg]. 
\end{eqnarray*}
The same techniques, as previously, allows to estimate the second term of $S_{4}$ as $ \mathcal{O}\Big(a^{4} \, \vert \log(a) \vert^{h}\Big)$. Then
\begin{eqnarray*} 
  S_{4}  & = &  2 \, \omega^{2} \mu_{0} \;  \Re \,\Bigg[ <u_{1};e_{n_{0}}> \, \tau \, \int_{\Omega \setminus D} \, f \;  \; \overline{u}_{0}(y) \, \int_{D}  G_{k}(\eta,y) \, e_{n_{0}}(\eta) \, d\eta\,  dy \Bigg] \\
 & + &  \mathcal{O}\left( \sum_{n \neq n_{0}}  \Re \,\Bigg[ <u_{1};e_{n}> \, \tau \, \int_{\Omega \setminus D} \, f \; \; \overline{u}_{0}(y) \, \int_{D}  G_{k}(\eta,y) \, e_{n}(\eta) \, d\eta\,  dy \Bigg]\right)  +   \mathcal{O}\Big(a^{4} \, \vert \log(a) \vert^{h}\Big). 
\end{eqnarray*}
We keep the term with index $n_{0}$ and estimate the series as
\begin{eqnarray}\label{mla}
\nonumber
\vert \mathcal{O}(\cdots) \vert & \lesssim & \vert \tau \vert \,  \,  \int_{\Omega \setminus D} \,\big\vert f \big\vert \; \big\vert \overline{u}_{0}(y)\big\vert \, \sum_{n \neq n_{0}}  \Big\vert <u_{1};e_{n}> \,\Big\vert \, \bigg\vert \int_{D}  G_{k}(\eta,y) \, e_{n}(\eta) \, d\eta\, \bigg\vert dy\\
& \lesssim & \vert \tau \vert \; \Vert u_{1} \Vert  \; \Vert f \, \overline{u}_{0} \Vert_{\mathbb{L}^{2}(\Omega \setminus D)} \; \Bigg( \int_{D} \int_{\Omega \setminus D}  \, \big\vert   G_{k}(\eta,y) \, \big\vert^{2} dy d\eta \bigg)^{\frac{1}{2}} =\mathcal{O}\Big(\vert \log(a) \vert^{-1}\Big).
\end{eqnarray}
Plug this in the last equation to obtain 
\begin{eqnarray*}
S_{4} & \stackrel{(\ref{ve=Ve})}= &  2 \, \omega^{2} \mu_{0} \;  \Re \,\Bigg[ \frac{<u_{0};e_{n_{0}}>}{[1-\omega^{2}\, \mu_{0} \, \tau \, \lambda_{n_{0}}]} \, \tau \, \int_{\Omega \setminus D} \, f \; \; \overline{u}_{0}(y) \, \int_{D}  G_{k}(\eta,y) \, e_{n_{0}}(\eta) \, d\eta\,  dy \Bigg] \\
 & + & \mathcal{O}\left(a \, \vert \log(a) \vert^{2h-1}  \;  \Re \,\Bigg[ \tau \, \int_{\Omega \setminus D} \, f \; \; \overline{u}_{0}(y) \, \int_{D}  G_{k}(\eta,y) \, e_{n_{0}}(\eta) \, d\eta\,  dy \Bigg]\right) + \mathcal{O}\big( \vert \log(a) \vert^{-1} \big), 
\end{eqnarray*}
the same technique, as previously again, see $(\ref{mla})$, allows us  to deduce that 
\begin{equation*}
a \, \vert \log(a) \vert^{2h-1}  \;  \Re \,\Bigg[ \tau \, \int_{\Omega \setminus D} \, f \; \; \overline{u}_{0}(y) \, \int_{D}  G_{k}(\eta,y) \, e_{n_{0}}(\eta) \, d\eta\,  dy \Bigg]  = \mathcal{O}\big( \vert \log(a) \vert^{2h-2} \big).
\end{equation*}
The last step is to use Taylor expansion to write $<u_{0};e_{n_{0}}>$ on function of the center $z$. We have    
\begin{eqnarray*}
 S_{4}  & = &  2 \, \omega^{2} \mu_{0} \; \int_{D} e_{n_{0}} \, dx \, \Re \,\Bigg[ \frac{u_{0}(z)}{[1-\omega^{2}\, \mu_{0} \, \tau \, \lambda_{n_{0}}]} \, \tau \, \int_{\Omega \setminus D} \, \; f \; \; \overline{u}_{0}(y) \, \int_{D}  G_{k}(\eta,y) \, e_{n_{0}}(\eta) \, d\eta\,  dy \Bigg] \\
& + & \mathcal{O}\left( \frac{\int_{D} \int_{0}^{1} (z-\eta)\centerdot \nabla u_{0}(z+s(\eta - z)) ds \, e_{n_{0}}(\eta) d\eta}{[1-\omega^{2}\, \mu_{0} \, \tau \, \lambda_{n_{0}}]} \, \tau \, \int_{\Omega \setminus D} \, \;  f \; \overline{u}_{0}(y)   \; \int_{D}  G_{k}(\eta,y) \, e_{n_{0}}(\eta) \, d\eta\,  dy \right) \\
 & + & \mathcal{O}\Big(\vert \log(a) \vert^{2h-2} \Big) + \mathcal{O}\Big(\vert \log(a) \vert^{-1} \Big),
\end{eqnarray*}
then we compute an estimation of the remainder term from Taylor expansion. More precisely, we have
\begin{eqnarray*}
\vert \mathcal{O}(\cdots) \vert & \lesssim &  a \, \vert \log(a) \vert^{h} \, \Bigg\vert \int_{D} e_{n_{0}} \, dx \Bigg\vert  \, \vert \tau \vert \, \Bigg\vert  \int_{\Omega \setminus D} \, \;  f \; \overline{u}_{0}(y)   \; \int_{D}  G_{k}(\eta,y) \, e_{n_{0}}(\eta) \, d\eta\,  dy \Bigg\vert \\
& \leq & \vert \log(a) \vert^{h-1} \;  \int_{D} \; \int_{\Omega \setminus D} \,  \big\vert  f \; \overline{u}_{0}(y)   \;   G_{k}(\eta,y)  \big\vert \, dy \, \big\vert e_{n_{0}}(\eta)  \big\vert \, d\eta\,   = \mathcal{O}\big(a \, \vert \log(a) \vert^{h-1}\big).
\end{eqnarray*}
Finally,
\begin{eqnarray*}
S_{4} &=&  2 \, \omega^{2} \mu_{0} \; \int_{D} e_{n_{0}} \, dx \, \Re \,\Bigg[ \frac{u_{0}(z)}{[1-\omega^{2}\, \mu_{0} \, \tau \, \lambda_{n_{0}}]} \, \tau \, \int_{\Omega \setminus D} \, \partial_{t} \, \frac{\, \Im(\varepsilon_{0})(y) \; \overline{u}_{0}(y) \,}{\sqrt{t^{2}-\vert x-y \vert^{2}}}  \int_{D}  G_{k}(\eta,y) \, e_{n_{0}}(\eta) \, d\eta\,  dy \Bigg] \\ 
&+& \mathcal{O}\Big(\vert \log(a) \vert^{-1}\Big) + \mathcal{O}\Big(\vert \log(a) \vert^{2h-2}\Big).
\end{eqnarray*}
Hence
\begin{eqnarray*}
T_{4} &=& p_{0}(t,x) + S_{3} + S_{4} \\ 
      &=& p_{0}(t,x) + 2 \, \omega^{2} \mu_{0} \; \int_{D} e_{n_{0}} \, dx \, \Re \,\Bigg[ \frac{u_{0}(z)}{[1-\omega^{2}\, \mu_{0} \, \tau \, \lambda_{n_{0}}]} \, \tau \, \int_{\Omega \setminus D} \, \partial_{t} \, \frac{\, \Im(\varepsilon_{0})(y) \; \overline{u}_{0}(y) \,}{\sqrt{t^{2}-\vert x-y \vert^{2}}}  \int_{D}  G_{k}(\eta,y) \, e_{n_{0}}(\eta) \, d\eta\,  dy \Bigg] \\ &+& \mathcal{O}\Big(\vert \log(a) \vert^{-1}\Big) + \mathcal{O}\Big(\vert \log(a) \vert^{2h-2}\Big).
\end{eqnarray*}
The equation $(\ref{P(t,x)})$ takes the form
\begin{eqnarray}\label{(p-p0)(t,x)} 
\nonumber
(p - p_{0})(t,x) &=& 2 \, \omega^{2} \mu_{0} \; \int_{D} e_{n_{0}} \, dx \, \Re \,\Bigg[ \frac{u_{0}(z)}{[1-\omega^{2}\, \mu_{0} \, \tau \, \lambda_{n_{0}}]} \, \tau \, \int_{\Omega \setminus D} \, \partial_{t} \, \frac{\, \Im(\varepsilon_{0})(y) \; \overline{u}_{0}(y) \,}{\sqrt{t^{2}-\vert x-y \vert^{2}}}  \int_{D}  G_{k}(\eta,y) \, e_{n_{0}}(\eta) \, d\eta\,  dy \Bigg] \\
&+&  \frac{-t \, \Im(\tau)}{(t^{2}-\vert x-z \vert^{2})^{3/2}}  \, \frac{\vert <u_{0};e_{n_{0}}> \vert^{2}}{\vert 1- \omega^{2} \mu_{0} \lambda_{n_{0}} \tau \vert^{2}} \, 
+ \mathcal{O}( \vert \log(a) \vert^{\max(2h-2,-1)}) + \mathcal{O}(\Im(\tau) \, a^{2} \, \vert \log(a) \vert^{\max(0,3h-1)}).
\end{eqnarray}
Recall that we take, 
\begin{equation*}
\Im(\tau) = \frac{1}{a^{2} \, \vert \log(a) \vert^{1+h+s}}
\end{equation*}
with 
\begin{equation}\label{cdt1s}
0 \leq s < \min(h,1-h).
\end{equation}
With this choice, the error part of $(\ref{(p-p0)(t,x)})$ will be of order  
$\mathcal{O}\big(\vert \log(a) \vert^{\max(-1,2h-2)}\big).$
Hence
\begin{eqnarray*}
(p - p_{0})(t,x) & = & 2 \omega^{2} \mu_{0}  \int_{D} e_{n_{0}}  dx  \Re \Bigg[ \frac{u_{0}(z)}{[1-\omega^{2} \mu_{0} \tau  \lambda_{n_{0}}]}  \tau  \int_{\Omega \setminus D}  \partial_{t}  \frac{\Im(\varepsilon_{0})(y)  \overline{u}_{0}(y)}{\sqrt{t^{2}-\vert x-y \vert^{2}}}  \int_{D}  G_{k}(\eta,y) e_{n_{0}}(\eta)  d\eta  dy \Bigg] \\
&+& \frac{-t \, \Im(\tau)}{(t^{2}-\vert x-z \vert^{2})^{3/2}} \, \frac{1}{\vert 1- \omega^{2} \, \mu_{0} \, \lambda_{n_{0}} \, \tau \vert^{2}} \, \vert <u_{0},e_{n_{0}}> \vert^{2} + \mathcal{O}\big(\vert \log(a) \vert^{\max(-1,2h-2)}\big).
\end{eqnarray*}
Using again the estimate
\begin{equation*}
\vert <u_{0},e_{n_{0}}> \vert^{2} =  \vert u_{0}(z) \vert^{2} \, \Big(\int_{D} e_{n_{0}} dx \Big)^{2} + \mathcal{O}(a^{3}),
\end{equation*}
we get 
\begin{eqnarray*}
\big(p - p_{0}\big)(t,x) & = & 2  \omega^{2} \mu_{0}  \int_{D} e_{n_{0}}  dx  \Re \Bigg[ \frac{u_{0}(z) \tau}{[1-\omega^{2} \mu_{0}  \tau \lambda_{n_{0}}]}   \int_{\Omega \setminus D}  \partial_{t} \frac{\Im(\varepsilon_{0})(y)  \overline{u}_{0}(y) }{\sqrt{t^{2}-\vert x-y \vert^{2}}}  \int_{D}  G_{k}(\eta,y)  e_{n_{0}}(\eta)  d\eta dy \Bigg] \\
&+& \frac{-t \, \Im(\tau)}{(t^{2}-\vert x-z \vert^{2})^{3/2}} \, \frac{\vert u_{0}(z) \vert^{2}}{\vert 1- \omega^{2} \, \mu_{0} \, \lambda_{n_{0}} \, \tau \vert^{2}} \,  \, \Big(\int_{D} e_{n_{0}} dx \Big)^{2} + \mathcal{O}\big(\vert \log(a) \vert^{\max(-1,2h-2)}\big).
\end{eqnarray*}
Now, if we take two frequencies $\omega^{2}_{\pm}$, such that $\omega^{2}_{\pm} = \omega^{2}_{n_{0}} \pm \vert \log(a) \vert^{-h}$, we obtain 
\begin{eqnarray*}
\big(p^{\pm} - p_{0}\big)(t,x) & = & 2 \, \omega_{\pm}^{2} \mu_{0} \; \int_{D} e_{n_{0}} \, dx \, \Re \,\Bigg[ \frac{u_{0}(z)}{[1-\omega_{\pm}^{2}\, \mu_{0} \, \tau \, \lambda_{n_{0}}]} \, \tau \, \int_{\Omega \setminus D} \, \partial_{t} \, \frac{\, \Im(\varepsilon_{0})(y) \; \overline{u}_{0}(y) \,}{\sqrt{t^{2}-\vert x-y \vert^{2}}}  \int_{D}  G_{k}(\eta,y) \, e_{n_{0}}(\eta) \, d\eta\,  dy \Bigg] \\
&+& \frac{-t \, \Im(\tau)}{(t^{2}-\vert x-z \vert^{2})^{3/2}} \, \frac{1}{\vert 1- \omega^{2}_{\pm} \, \mu_{0} \, \lambda_{n_{0}} \, \tau \vert^{2}} \, \vert u_{0}(z) \vert^{2} \, \Big(\int_{D} e_{n_{0}} dx \Big)^{2} + \mathcal{O}(\vert \log(a) \vert^{\max(-1,2h-2)}).
\end{eqnarray*}
We use $ 1-\omega^{2}_{n_{0}} \, \mu_{0} \lambda_{n_{0}} \, \tau = 0$ to deduce that $\vert 1-\omega^{2}_{\pm} \, \mu_{0} \lambda_{n_{0}} \, \tau \vert = \mathcal{O}( \vert \log(a) \vert^{-h})$. \\ 
After some simplifications we get
\begin{eqnarray*}
\big(p^{\pm} - p_{0}\big)(t,x) & = & 2 \, \omega_{n_{0}}^{2} \mu_{0} \; \int_{D} e_{n_{0}} \, dx \, \Re \,\Bigg[ \frac{u_{0}(z)}{[1-\omega_{\pm}^{2}\, \mu_{0} \, \tau \, \lambda_{n_{0}}]} \, \tau \, \int_{\Omega \setminus D} \, \partial_{t} \, \frac{\, \Im(\varepsilon_{0})(y) \; \overline{u}_{0}(y) \,}{\sqrt{t^{2}-\vert x-y \vert^{2}}}  \int_{D}  G_{k}(\eta,y) \, e_{n_{0}}(\eta) \, d\eta\,  dy \Bigg] \\
&+& \frac{-t \, \Im(\tau)}{(t^{2}-\vert x-z \vert^{2})^{3/2}} \, \frac{1}{\vert 1- \omega^{2} \, \mu_{0} \, \lambda_{n_{0}} \, \tau \vert^{2}}  \,  \vert u_{0}(z) \vert^{2} \, \Big(\int_{D} e_{n_{0}} dx \Big)^{2} + \mathcal{O}(\vert \log(a) \vert^{\max(-1,2h-2)}).
\end{eqnarray*}
Next, 
\begin{eqnarray*}
(p^{+} + p^{-} - 2p_{0})(t,x) & = & 4 \, \omega^{2}_{n_{0}} \mu_{0} \,  \int_{D} e_{n_{0}} dx\; \Re \,\Bigg[\frac{u_{0}(z) \; \tau \; (1-\omega^{2}_{n_{0}} \, \mu_{0} \lambda_{n_{0}} \, \tau)}{(1-\omega^{2}_{+} \, \mu_{0} \lambda_{n_{0}} \, \tau)\, (1-\omega^{2}_{-} \, \mu_{0} \lambda_{n_{0}} \, \tau)} \\&&  \int_{\Omega \setminus D} \, \partial_{t} \, \frac{ \Im(\varepsilon_{0})(y) \; \overline{u}_{0}(y)}{\sqrt{t^{2}-\vert x-y \vert^{2}}}  \int_{D}  G_{k}(\eta,y) \, e_{n_{0}}(\eta) \, d\eta\,  dy \Bigg] \\
& + & \frac{-2 \, t \, \Im(\tau)}{(t^{2}-\vert x-z \vert^{2})^{3/2}} \,  \frac{\vert u_{0}(z) \vert^{2}}{\vert 1- \omega^{2} \, \mu_{0} \, \lambda_{n_{0}} \, \tau \vert^{2}} \,   \, \Big(\int_{D} e_{n_{0}} dx \Big)^{2} +  \mathcal{O}(\vert \log(a) \vert^{\max(-1,2h-2)}),
\end{eqnarray*}
thanks to $(\ref{exactMieresonance})$, we know that $(1-\omega^{2}_{n_{0}} \, \mu_{0} \lambda_{n_{0}} \, \tau)=0$ , then the right term of this equation will be reduced to only the dominant term. Finally, we obtain 
\begin{equation}\label{valueofV}
(p^{+} + p^{-} - 2p_{0})(t,x)  =  \frac{-t \, }{(t^{2}-\vert x-z \vert^{2})^{3/2}} \, \, \frac{2 \; \Im(\tau) \; \, \vert u_{0}(z) \vert^{2}}{\vert 1- \omega^{2} \, \mu_{0} \, \lambda_{n_{0}} \, \tau \vert^{2}}  \Big(\int_{D} e_{n_{0}} dx \Big)^{2} + \mathcal{O}\big(\vert \log(a) \vert^{\max(-1,2h-2)}\big),
\end{equation}
or, with help of $(\ref{ve=Ve})$, 
\begin{equation}\label{abxyz}
(p^{+} + p^{-} - 2p_{0})(t,x)  =  \frac{-2 \, t \; \Im(\tau) \; \vert <u_{1};e_{n_{0}}> \vert^{2}}{(t^{2}-\vert x-z \vert^{2})^{3/2}}   + \mathcal{O}\big(\vert \log(a) \vert^{\max(-1,2h-2)}\big).
\end{equation}

\subsection{Photo-acoustic imaging using two close particles (Dimers)} 
Proof of $(\ref{pressure-tilde-expansion})$
\ 
\\To avoid using, in the proof, more notations we keep the same ones as in the case of one particle whenever this is possible. 
\begin{lemma}\label{lemmad1d2} 
We have
\begin{equation}\label{xd1d2}
u_{2}(x) =  \mathcal{O}(1) + \mathcal{O}(\vert \log(a) \vert^{h-1} \; \vert \log(dist(x,D_{1} \cup D_{2})) \vert), \qquad \qquad x \notin D_{1} \cup D_{2}. 
\end{equation}
\end{lemma}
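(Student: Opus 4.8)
The plan is to follow, almost verbatim, the argument already used for the single--particle estimate (\ref{awayD}), now with the scatterer taken to be the disconnected union $D_{1}\cup D_{2}$. First I would write the Lippmann--Schwinger equation for the two--particle configuration,
\begin{equation*}
u_{2}(x) = u_{0}(x) + \omega^{2}\,\mu_{0}\int_{D_{1}\cup D_{2}} (\varepsilon_{p}-\varepsilon_{0})(y)\,G_{k}(y,x)\,u_{2}(y)\,dy, \qquad x\in\mathbb{R}^{2},
\end{equation*}
and evaluate it at a point $x\notin D_{1}\cup D_{2}$. Since $u_{0}$ is smooth and independent of $a$, the term $u_{0}(x)$ contributes the $\mathcal{O}(1)$ part, so everything reduces to estimating the volume potential over $D_{1}\cup D_{2}$ away from its support.

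For that integral I would combine three ingredients already available in the excerpt. First, the contrast rate $\tau\sim a^{-2}\,\vert\log(a)\vert^{-1}$ together with $\omega\sim\omega_{n_{0}}=\mathcal{O}(1)$, which gives $\vert\omega^{2}\mu_{0}\tau\vert\sim a^{-2}\,\vert\log(a)\vert^{-1}$. Second, the a priori bound (\ref{prioriest}), applied with $D=D_{1}\cup D_{2}$, which yields $\Vert u_{2}\Vert_{\mathbb{L}^{2}(D_{1}\cup D_{2})}\lesssim \vert\log(a)\vert^{h}\,\Vert u_{0}\Vert_{\mathbb{L}^{2}(D_{1}\cup D_{2})}\sim a\,\vert\log(a)\vert^{h}$, the last step using $\vert D_{1}\cup D_{2}\vert\sim a^{2}$ and the smoothness of $u_{0}$. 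Third, the bound on the Green kernel off its singular set: bounding $\vert G_{k}\vert$ by its dominant logarithmic part from Lemma~\ref{GreenKernel} and splitting the integral over the two components, one obtains $\big[\int_{D_{1}\cup D_{2}}\vert G_{0}\vert^{2}(y,x)\,dy\big]^{1/2}\lesssim a\,\vert\log(\mathrm{dist})\vert$, with $\mathrm{dist}=\mathrm{dist}(x,D_{1}\cup D_{2})$; here each of the two pieces is controlled by the distance from $x$ to the respective particle, and both of those distances are at least $\mathrm{dist}$.

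Putting these together via Cauchy--Schwarz gives
\begin{equation*}
\Big\vert \omega^{2}\mu_{0}\int_{D_{1}\cup D_{2}} (\varepsilon_{p}-\varepsilon_{0})\,G_{k}(\cdot,x)\,u_{2}\,dy\Big\vert \lesssim a^{-2}\vert\log(a)\vert^{-1}\cdot a\,\vert\log(a)\vert^{h}\cdot a\,\vert\log(\mathrm{dist})\vert = \vert\log(a)\vert^{h-1}\,\vert\log(\mathrm{dist})\vert,
\end{equation*}
which, combined with the $\mathcal{O}(1)$ contribution of $u_{0}$, is exactly (\ref{xd1d2}). The only point requiring care --- and hence the main, though minor, obstacle --- is to justify that the a priori estimate (\ref{prioriest}) and the spectral/Foldy--Lax machinery behind it remain valid when $D$ is the \emph{disconnected} union $D_{1}\cup D_{2}$ rather than a single particle. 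This is precisely where the separation condition $\vert z_{1}-z_{2}\vert\geq\exp(-\vert\log(a)\vert^{1-h})$ and the invertibility result of Lemma~\ref{d>exp} enter: they guarantee that the two--particle algebraic system is solvable with the same bounds, yielding the same global $\mathbb{L}^{2}$ control of $u_{2}$ on the two particles as in the one--particle case and thus closing the estimate.
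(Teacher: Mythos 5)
Your proposal is correct and follows essentially the same route as the paper, which itself omits the proof with the remark that it is identical to the one-particle argument of Lemma \ref{awayD}: Lippmann--Schwinger over $D_{1}\cup D_{2}$, Cauchy--Schwarz with the multi-particle a priori bound $(\ref{apmp})$ and the logarithmic estimate on $\big[\int_{D_{1}\cup D_{2}}\vert G_{0}\vert^{2}(y,x)\,dy\big]^{1/2}$. Your closing remark correctly identifies where the separation condition enters, namely in guaranteeing that the a priori estimate of Proposition \ref{abc} (Step 2, multiple particles) applies to the disconnected scatterer.
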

\begin{proof}
We skip the proof since it is similar to that of one particle (see the proof of Lemma $\ref{awayD}$).  
\end{proof}
Now, from Poisson's formula, the solution can be written as
\begin{eqnarray*}
p(t,x) & = & \sum_{i=1}^{2} \partial_{t} \int_{\vert x-y \vert < t} \frac{(\Im(\varepsilon_{p}) \vert u_{2} \vert^{2})(y)}{\sqrt{t^{2}-\vert x-y \vert^{2}}} \chi_{D_{i}} \, dy + \partial_{t} \int_{\vert x-y \vert < t} \frac{(\Im(\varepsilon_{0}) \vert u_{2} \vert^{2})(y)}{\sqrt{t^{2}-\vert x-y \vert^{2}}} \chi_{\Omega \setminus D} \, dy \\
& = & \sum_{i=1}^{2} \partial_{t} \int_{\vert x-y \vert < t} \frac{(\Im(\varepsilon_{p}-\varepsilon_{0}) \vert u_{2} \vert^{2})(y)}{\sqrt{t^{2}-\vert x-y \vert^{2}}} \chi_{D_{i}} \, dy + \partial_{t} \int_{\vert x-y \vert < t} \frac{(\Im(\varepsilon_{0}) \vert u_{2} \vert^{2})(y)}{\sqrt{t^{2}-\vert x-y \vert^{2}}} \chi_{\Omega} \, dy. 
\end{eqnarray*}
For $t > diam (\Omega)$ we have 
\begin{equation*}
p(t,x)  =  \sum_{i=1}^{2} \partial_{t} \int_{D_{i}} \frac{(\Im(\varepsilon_{p}-\varepsilon_{0}) \vert u_{2} \vert^{2})(y)}{\sqrt{t^{2}-\vert x-y \vert^{2}}}  \, dy + \partial_{t} \int_{\Omega} \frac{(\Im(\varepsilon_{0}) \vert u_{2} \vert^{2})(y)}{\sqrt{t^{2}-\vert x-y \vert^{2}}} \, dy. 
\end{equation*}
As before set 
\begin{equation*}
T_{4}^{\star} := \partial_{t} \int_{\Omega} \frac{(\Im(\varepsilon_{0}) \vert u_{2} \vert^{2})(y)}{\sqrt{t^{2}-\vert x-y \vert^{2}}} \, dy.
\end{equation*}
Next, we assume that $\tau_{1} = \tau_{2} = \tau$ and we use Taylor expansion of $(\varepsilon_{p}-\varepsilon_{0})(\centerdot)$ and $\big(t^{2} - \vert x - \centerdot \vert^{2} \big)^{-3/2}$ near $z_{1,2}$ to obtain
\begin{equation*}
p(t,x)  =  -t \, \Im(\tau) \; \sum_{i=1}^{2}  \int_{D_{i}} \frac{ \vert u_{2} \vert^{2}(y)}{\big(t^{2}-\vert x-y \vert^{2}\big)^{3/2}}  \, dy +T_{4}^{\star} + \mathcal{O}\left(\sum_{i=1}^{2} \int_{D_{i}} \frac{ \int_{0}^{1} (y-z_{i})\centerdot \nabla \varepsilon_{0}(z_{i}+t(y-z_{i})) \, dt \, \vert u_{2} \vert^{2}(y)}{\big(t^{2}-\vert x-y \vert^{2}\big)^{3/2}}  \, dy \right).
\end{equation*}
The remainder term, as done in $(\ref{lb})$, is of order $\mathcal{O}\big(a^{3} \, \vert \log(a) \vert^{2h} \big)$. Then, as in the case of one particle, we have  
\begin{eqnarray*}
p(t,x) & = & \sum_{i=1}^{2} \frac{-t \Im(\tau)}{\big(t^{2}-\vert x-z_{i} \vert^{2}\big)^{3/2}}   \int_{D_{i}}  \vert u_{2} \vert^{2}  dy +  T_{4}^{\star} \\
& +& \mathcal{O}\left( \sum_{i=1}^{2} \Im(\tau)  \int_{D_{i}} (\vert y - z_{i} \vert^{2} + 2 <x-z_{i};z_{i}-y> )  \vert u_{2} \vert^{2}(y) dy  \right) + \mathcal{O}\big(a^{3} \, \vert \log(a) \vert^{2h} \big).
\end{eqnarray*}
We deduce as in $(\ref{hdd})$ that the remainder term can be estimated as $\mathcal{O}\big( \Im(\tau) \, a^{3} \, \vert \log(a) \vert^{2h} \big)$. Next, we develop $u_{2}$ over the basis  and we use $(\ref{oyaya})$ to estimate the remainder term to obtain 
\begin{eqnarray*}
p(t,x) & = & -t \Im(\tau) \sum_{i=1}^{2} \frac{\vert <u_{2};e^{(i)}_{n_{0}}> \vert^{2}}{\big(t^{2}-\vert x-z_{i} \vert^{2}\big)^{3/2}} +  T_{4}^{\star} + \mathcal{O}\left(\Im(\tau) \sum_{i=1 \atop n \neq n_{0}}^{2}  \vert <u_{2};e^{(i)}_{n}> \vert^{2}  \right) + \mathcal{O}\big(\Im(\tau) \, a^{3} \, \vert \log(a) \vert^{2h} \big).
\end{eqnarray*}
Then we get
\begin{equation}\label{T4p}
p(t,x)  =  -t \, \Im(\tau) \, \sum_{i=1}^{2} \, \frac{\vert <u_{2};e^{(i)}_{n_{0}}> \vert^{2}}{\big(t^{2}-\vert x-z_{i} \vert^{2}\big)^{3/2}} + T^{\star}_{4} + \mathcal{O}\big(\Im(\tau) \, a^{2} \big).
\end{equation}
Set $\Omega_{1,2} := \Omega \setminus \big( D_{1} \cup D_{2} \big)$ and write $T^{\star}_{4}$  as: 
\begin{equation*}
T^{\star}_{4}  =  \int_{\Omega} (\Im(\varepsilon_{0}) \vert u_{2} \vert^{2})(y) \; \partial_{t} \, \frac{1}{\sqrt{t^{2}-\vert x-y \vert^{2}}} \, dy  =  \int_{\Omega_{1,2}}  \vert u_{2} \vert^{2} \; f \; dy + \int_{D_{1} \cup D_{2}}  \vert u_{2} \vert^{2} \; f \; dy.
\end{equation*}
From the a priori estimate, see $(\ref{apmp})$, and lemma  $(\ref{lemmad1d2})$ we deduce that the first integral dominates the second one. Now, since $f$ is smooth, the a priori estimate allows to estimate the integral over $D_{1} \cup D_{2}$ as follows
\begin{equation*}
\Big\vert \int_{D_{1} \cup D_{2}}  \vert u_{2} \vert^{2} \; f \; dy \Big\vert \lesssim \Vert u_{2} \Vert^{2} = \mathcal{O}\big(a^{2} \vert \log(a) \vert^{2h}\big).
\end{equation*}
Then we use L.S.E to obtain 
\begin{eqnarray*}
T^{\star}_{4} & = & \int_{\Omega}  \, f \, \vert u_{0} \vert^{2} dy  - \int_{D_{1} \cup D_{2}}  \, f \, \vert u_{0} \vert^{2} dy +  2 \,\omega^{2} \, \mu_{0} \, \sum_{i=1}^{2} \,\Re\Bigg[ \int_{\Omega_{1,2}}  \; f \; \overline{u}_{0}(y) \, \int_{D_{i}} (\varepsilon_{p}-\varepsilon_{0})(\eta) \, G_{k}(\eta,y) \, u_{2}(\eta) \,  d\eta \, d y \Bigg] \\
& + & ( \omega^{2} \, \mu_{0} )^{2} \, \sum_{i=1}^{2} \, \int_{\Omega_{1,2}} f \,\Big\vert \int_{D_{i}} (\varepsilon_{p}-\varepsilon_{0})(\eta) \, G_{k}(\eta,y) \, u_{2}(\eta) \,  d\eta \Big\vert^{2} \, dy. \\
& + & 2 \, \Re\Bigg[  \int_{\Omega_{1,2}} f \; \int_{D_{1}} \overline{(\varepsilon_{p}-\varepsilon_{0})(\eta) \, G_{k}(\eta,y) \, u_{2}(\eta)} \,  d\eta \; \int_{D_{2}} (\varepsilon_{p}-\varepsilon_{0})(\eta) \, G_{k}(\eta,y) \, u_{2}(\eta) \,  d\eta  dy \Bigg] + \mathcal{O}\big(a^{2} \vert \log(a) \vert^{2h}\big).
\end{eqnarray*}
Clearly, by the smoothness of $f$ and $\vert u_{0} \vert$, we have
\begin{equation*}
\Big\vert \int_{D_{1} \cup D_{2}}  \, f \, \vert u_{0} \vert^{2} dy \Big\vert = \mathcal{O}\big( a^{2} \big).
\end{equation*}
Then, we obtain\footnote{For the definition of $p_{0}(t,x)$, see $(\ref{p0np})$.}
\begin{eqnarray*}
T^{\star}_{4} & = & p_{0}(t,x) +  2 \,\omega^{2} \, \mu_{0} \, \sum_{i=1}^{2} \,\Re\Bigg[ \int_{\Omega_{1,2}} f \; \overline{u}_{0}(y) \, \int_{D_{i}} (\varepsilon_{p}-\varepsilon_{0})(\eta) \, G_{k}(\eta,y) \, u_{2}(\eta) \,  d\eta \, d y \Bigg]\\
& + & 2 \, \Re\Bigg[  \int_{\Omega_{1,2}} f \; \int_{D_{1}} \overline{(\varepsilon_{p}-\varepsilon_{0})(\eta) \, G_{k}(\eta,y) \, u_{2}(\eta)} \,  d\eta \; \int_{D_{2}} (\varepsilon_{p}-\varepsilon_{0})(\eta) \, G_{k}(\eta,y) \, u_{2}(\eta) \,  d\eta  dy \Bigg] \\ 
&+& (\omega^{2} \, \mu_{0} )^{2} \, \sum_{i=1}^{2} \, \int_{\Omega_{1,2}} f\, \,\Big\vert \int_{D_{i}} (\varepsilon_{p}-\varepsilon_{0})(\eta) \, G_{k}(\eta,y) \, u_{2}(\eta) \,  d\eta \Big\vert^{2} \, dy + \mathcal{O}\big( a^{2} \, \vert \log(a) \vert^{2h} \big).
\end{eqnarray*}
We remark that 
\begin{equation*}
 (\omega^{2} \, \mu_{0} )^{2} \, \, \int_{\Omega_{1,2}} f\, \,\Big\vert \int_{D_{i}} (\varepsilon_{p}-\varepsilon_{0})(\eta) \, G_{k}(\eta,y) \, u_{2}(\eta) \,  d\eta \Big\vert^{2} \, dy \quad \text{for} \; i=1,2
\end{equation*}
have the same expression as $S_{3}$ given in section $(\ref{opsubsection})$ (more exactly see $\ref{S3}$). Then we estimate it as $\mathcal{O}\big( \vert \log(a) \vert^{2h-2} \big)$. 
Similarly, regardless of whether the position of $y$ is in $D_{1}$ or $D_{2}$, the same estimation holds for 
\begin{equation*}
2 \, \Re\Bigg[  \int_{\Omega_{1,2}} f \; \int_{D_{1}} \overline{(\varepsilon_{p}-\varepsilon_{0})(\eta) \, G_{k}(\eta,y) \, u_{2}(\eta)} \,  d\eta \; \int_{D_{2}} (\varepsilon_{p}-\varepsilon_{0})(\eta) \, G_{k}(\eta,y) \, u_{2}(\eta) \,  d\eta  dy \Bigg]
\end{equation*}  
We synthesize the above to get  
\begin{equation*}
T^{\star}_{4}  =  p_{0}(t,x) +  2 \,\omega^{2} \, \mu_{0} \, \sum_{i=1}^{2} \,\Re\Bigg[ \int_{\Omega_{1,2}} f \; \overline{u}_{0}(y) \, \int_{D_{i}} (\varepsilon_{p}-\varepsilon_{0})(\eta) \, G_{k}(\eta,y) \, u_{2}(\eta) \,  d\eta \, d y \Bigg]\\
 +   \mathcal{O}\big(  \vert \log(a) \vert^{2h-2} \big).
\end{equation*}
Next, we develop $u_{2}$ over the basis and use the Taylor expansion of $(\varepsilon_{p}-\varepsilon)(\centerdot)$ to obtain  
\begin{eqnarray*}
T^{\star}_{4} & = & p_{0}(t,x)  +  2 \,\omega^{2} \, \mu_{0} \, \sum_{i=1}^{2} \,\Re\Bigg[\tau \, <u_{2};e^{(i)}_{n_{0}}> \int_{\Omega_{1,2}} f \; \overline{u}_{0}(y) \, \int_{D_{i}} \, G_{k}(\eta,y) \, e^{(i)}_{n_{0}}(\eta) \,  d\eta \, d y \Bigg] \\
& - & 2 \,\omega^{2} \, \mu_{0} \, \sum_{i=1}^{2} \,\Re\Bigg[ \int_{\Omega_{1,2}}  \; f \; \overline{u}_{0}(y) \, \int_{D_{i}} \, \int_{0}^{1} (z_{i}- \eta) \centerdot \nabla \varepsilon_{0}(z_{i}+s(\eta - z_{i})) \, ds \, G_{k}(\eta,y) \, u_{2}(\eta) \,  d\eta \, d y \Bigg] \\
& + & 2 \,\omega^{2} \, \mu_{0}  \, \sum_{i=1,2 \atop n \neq n_{0}} \Re\Bigg[\tau \, <u_{2};e^{(i)}_{n}> \int_{\Omega_{1,2}} f \; \overline{u}_{0}(y) \, \int_{D_{i}} \, G_{k}(\eta,y) \, e^{(i)}_{n}(\eta) \,  d\eta \, d y \Bigg] + \mathcal{O}\big( \vert \log(a) \vert^{2h-2} \big) 
\end{eqnarray*}
To precise the value of the error we need to estimate 
\begin{eqnarray*}
&& \Bigg\vert \sum_{i=1}^{2} \,\Re\Bigg[ \int_{\Omega_{1,2}}  \; f \; \overline{u}_{0}(y) \, \int_{D_{i}} \, \int_{0}^{1} (z_{i}- \eta) \centerdot \nabla \varepsilon_{0}(z_{i}+s(\eta - z_{i})) \, ds \, G_{k}(\eta,y) \, u_{2}(\eta) \,  d\eta \, d y \Bigg] \Bigg\vert \\
& \leq & a \, \sum_{i=1}^{2} \, \int_{\Omega_{1,2}} \, \Big\vert    \; f \; \overline{u}_{0}(y) \, \Big\vert \,\Bigg\vert \int_{D_{i}}  \, G_{k}(\eta,y) \, u_{2}(\eta) \,  d\eta \, \Bigg\vert dy  \lesssim   a \,\sum_{i=1}^{2} \, \Bigg( \int_{\Omega_{1,2}} \,  \Bigg\vert \int_{D_{i}}  \, G_{k}(\eta,y) \, u_{2}(\eta) \,  d\eta \, \Bigg\vert^{2} dy \Bigg)^{\frac{1}{2}} \\ 
& \lesssim & a \, \Vert u_{2} \Vert \, \sum_{i=1}^{2} \, \Bigg(   \int_{D_{i}} \int_{\Omega_{1,2}}  \, \vert G_{k}\vert^{2} (\eta,y)  \,  dy \,  d\eta \Bigg)^{\frac{1}{2}} = \mathcal{O}\big( a^{3} \, \vert \log(a) \vert^{h} \big)
\end{eqnarray*}
and 
\begin{eqnarray*}
&& \Bigg\vert \sum_{i=1,2 \atop n \neq n_{0}} \Re\Bigg[\tau \, <u_{2};e^{(i)}_{n}> \int_{\Omega_{1,2}} f \; \overline{u}_{0}(y) \, \int_{D_{i}} \, G_{k}(\eta,y) \, e^{(i)}_{n}(\eta) \,  d\eta \, d y \Bigg] \Bigg\vert \\
& \leq & \vert \tau \vert \; \Big( \sum_{i=1,2 \atop n \neq n_{0}} \,\big\vert <u_{2};e^{(i)}_{n}> \, \big\vert^{2} \Big)^{\frac{1}{2}} \, \Bigg( \int_{\Omega_{1,2}} \Big\vert f \; \overline{u}_{0}(y) \,\Big\vert^{2} dy \Bigg)^{\frac{1}{2}} \, \Bigg(   \int_{\Omega_{1,2}} \sum_{i=1,2 \atop n \neq n_{0}} \Big\vert \int_{D_{i}} \, G_{k}(\eta,y) \, e^{(i)}_{n}(\eta) \,  d\eta \, \Big\vert^{2} \, d y  \Bigg)^{\frac{1}{2}} \\ 
& \leq & \vert \tau \vert \; \Vert u_{0} \Vert  \, \Bigg( \int_{D} \,  \int_{\Omega_{1,2}} \vert G_{k}\vert^{2}(\eta,y)   dy  \, d\eta  \Bigg)^{\frac{1}{2}} = \mathcal{O}\big( \vert \log(a) \vert^{-1} \big)
\end{eqnarray*}
We keep the dominant term and sum the others as an error to obtain 
\begin{equation*}
T^{\star}_{4}  =  p_{0}(t,x) + 2 \,\omega^{2} \, \mu_{0} \, \sum_{i=1}^{2} \,\Re\Bigg[\tau \, <u_{2};e^{(i)}_{n_{0}}> \int_{\Omega_{1,2}} f \; \overline{u}_{0}(y) \, \int_{D_{i}} \, G_{k}(\eta,y) \, e^{(i)}_{n_{0}}(\eta) \,  d\eta \, d y \Bigg]  + \mathcal{O}\big(\vert \log(a) \vert^{\max(-1,2h-2)}\big).
\end{equation*}

Use ($\ref{v&V}$) to obtain
\begin{eqnarray}\label{ved53}
\nonumber
T^{\star}_{4} & = &  2 \,\omega^{2} \, \mu_{0} \, \sum_{i=1}^{2} \,\Re\Bigg[ \tau \; \frac{ <u_{0};e^{(i)}_{n_{0}}>}{det^{\star}} \int_{\Omega_{1,2}}  f \;\, \overline{u}_{0}(y)  \, \int_{D_{i}} \, G_{k}(\eta;y) \, e^{(i)}_{n_{0}}(\eta) \,  d\eta \, d y \Bigg] + p_{0}(t,x) \\
&+& \mathcal{O}\Bigg(\tau \, a \,  \sum_{i=1}^{2} \,  \int_{\Omega_{1,2}} f \; \, \overline{u}_{0}(y) \, \int_{D_{i}} \, G_{k}(\eta,y) \, e^{(i)}_{n_{0}}(\eta) \,  d\eta \, d y  \Bigg) + \mathcal{O}\big(\vert \log(a) \vert^{\max(-1,2h-2)}\big),
\end{eqnarray}
where 
\begin{equation*}
det^{\star} := (1-\omega^{2}\,\mu_{0}\,\tau\,\lambda_{n_{0}})-\omega^{2}\, \mu_{0}\, \tau \, a^{2} \, \Phi_{0}(z_{1};z_{2}) \, \Big( \int_{B} \overline{e}_{n_{0}} \Big)^{2}, 
\end{equation*}
and 
\begin{equation*}
\tau  a \sum_{i=1}^{2}  \int_{\Omega_{1,2}} f   \overline{u}_{0}(y) \int_{D_{i}} G_{k}(\eta,y)  e^{(i)}_{n_{0}}(\eta) d\eta  d y = \tau  a  \sum_{i=1}^{2}  \int_{D_{i}}  \int_{\Omega_{1,2}} f   \overline{u}_{0}(y) G_{k}(\eta,y)  dy  e^{(i)}_{n_{0}}(\eta)  d \eta = \mathcal{O}\big( \vert \log(a) \vert^{-1} \big). 
\end{equation*}
The last equality is justified by the fact that we integrate a smooth function over $\Omega_{1,2}$ and we know that the integral over $D$ of an eigenfunction is of the order  $a$. \\  
Also we can write $(\ref{ved53})$ as
\begin{eqnarray*}
T^{\star}_{4} & = & 2 \,\omega^{2} \, \mu_{0} \, \int_{D} e_{n_{0}} dx \sum_{i=1}^{2} \,\Re\Bigg[\tau \; \frac{ u_{0}(z_{i}) \; }{det^{\star}} \int_{\Omega_{1,2}}  \partial_{t} \frac{\Im(\varepsilon_{0})(y)\, \overline{u}_{0}(y)}{\sqrt{t^{2}-\vert x-y \vert^{2}}}  \, \int_{D_{i}} \, G_{k}(\eta;y) \, e^{(i)}_{n_{0}}(\eta) \,  d\eta \, d y \Bigg] + p_{0}(t,x)\\
&+& \mathcal{O}\Bigg( \sum_{i=1}^{2} \, \frac{\tau \; a^{2} }{det^{\star}} \int_{\Omega_{1,2}}  f \, \overline{u}_{0}(y) \, \int_{D_{i}} \, G_{k}(\eta,y) \, e^{(i)}_{n_{0}}(\eta) \,  d\eta \, d y  \Bigg) + \mathcal{O}\big(\vert \log(a) \vert^{\max(-1,2h-2)}\big).
\end{eqnarray*}
We have 
\begin{equation*}
\mathcal{O}\Bigg( \sum_{i=1}^{2} \, \frac{\tau \; a^{2} }{det^{\star}} \int_{\Omega_{1,2}}  f \, \overline{u}_{0}(y) \, \int_{D_{i}} \, G_{k}(\eta,y) \, e^{(i)}_{n_{0}}(\eta) \,  d\eta \, d y  \Bigg) = \mathcal{O}(a \, \vert \log(a) \vert^{h-1})
\end{equation*}
since, if we compare it with the error term given in equation $(\ref{ved53})$ we deduce that they are different by a term of order $a / det^{\star}$. Finally: 
\begin{eqnarray*}
T^{\star}_{4} & = &   2 \,\omega^{2} \, \mu_{0} \, \int_{D} e_{n_{0}} dx \sum_{i=1}^{2} \,\Re\Bigg[ \displaystyle\frac{\tau \; u_{0}(z_{i}) }{det^{\star}} \; \int_{\Omega_{1,2}}  \partial_{t} \frac{\Im(\varepsilon_{0})(y)\, \overline{u}_{0}(y)}{\sqrt{t^{2}-\vert x-y \vert^{2}}}  \, \int_{D_{i}} \, G_{k}(\eta,y) \, e^{(i)}_{n_{0}}(\eta) \,  d\eta \, d y \Bigg]  +  p_{0}(t,x) \\  &+&  \mathcal{O}\big(\vert \log(a) \vert^{\max(-1,2h-2)}\big).
\end{eqnarray*}
We set $I_{i}$ to be 
\begin{equation*}
I_{i} := \int_{\Omega_{1,2}}  \; \partial_{t} \, \frac{\Im(\varepsilon_{0})(y)\, \overline{u}_{0}(y)}{\sqrt{t^{2}-\vert x-y \vert^{2}}}  \int_{D_{i}} \, G_{k}(\eta,y) \, e^{(i)}_{n_{0}}(\eta) \,  d\eta \, d y,
\end{equation*}
and use the estimation of $T^{\star}_{4}$ in the equation $(\ref{T4p})$ to obtain: 
\begin{eqnarray*}
(p-p_{0})(t,x)  &=& 2 \,\omega^{2} \, \mu_{0} \,\,\Big( \int_{D} e_{n_{0}} dx \Big)\, \sum_{i=1}^{2} \Re\Bigg[ \frac{\tau \, u_{0}(z_{i})}{det^{\star}} I_{i} \Bigg] - t \, \Im(\tau) \, \sum_{i=1}^{2} \, \frac{\vert <u_{2};e^{(i)}_{n_{0}}> \vert^{2}}{\big(t^{2}-\vert x-z_{i} \vert^{2}\big)^{3/2}} \\
&+& \mathcal{O}\big(\vert \log(a) \vert^{\max(-1,2h-2)}\big).
\end{eqnarray*} 
We use the next lemma to simplify the expression of $p(t,x)$
\begin{lemma}\label{lemma35}
We have
\begin{equation}\label{v1n0=v2n0}
<u_{2};e^{(1)}_{n_{0}}> = <u_{2};e^{(2)}_{n_{0}}> + \mathcal{O}(a).
\end{equation}
\end{lemma}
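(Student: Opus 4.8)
The plan is to read off both inner products from the single field representation \ref{v&V}, applied to each of the two particles, and then subtract. Writing \ref{v&V} for $i=1,2$ gives
\[
<u_{2};e^{(i)}_{n_{0}}> = \frac{<u_{0};e^{(i)}_{n_{0}}>}{det^{\star}} + (\mathrm{error})_{i},
\]
and the decisive observation is that the denominator
\[
det^{\star} = (1-\omega^{2}\mu_{0}\tau\lambda_{n_{0}}) - \omega^{2}\mu_{0}\tau\,a^{2}\,\Phi_{0}(z_{1};z_{2})\Big(\int_{B}\overline{e}_{n_{0}}\Big)^{2}
\]
is symmetric in $z_{1},z_{2}$, hence common to both indices. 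Consequently the whole difference collapses to
\[
<u_{2};e^{(1)}_{n_{0}}> - <u_{2};e^{(2)}_{n_{0}}> = \frac{<u_{0};e^{(1)}_{n_{0}}> - <u_{0};e^{(2)}_{n_{0}}>}{det^{\star}} + (\mathrm{error})_{1} - (\mathrm{error})_{2},
\]
so it remains only to control the difference of the numerators against the size of $det^{\star}$.

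For the numerators I would Taylor expand the smooth field $u_{0}$ about each center: since $\Vert e^{(i)}_{n_{0}}\Vert_{\mathbb{L}^{1}(D_{i})} = \mathcal{O}(a)$,
\[
<u_{0};e^{(i)}_{n_{0}}> = u_{0}(z_{i})\int_{D_{i}}\overline{e^{(i)}_{n_{0}}}\,dx + \mathcal{O}(a^{2}).
\]
The scaling $D_{i} = z_{i}+aB$ gives $\int_{D_{i}}\overline{e^{(i)}_{n_{0}}}\,dx = a\int_{B}\overline{e}_{n_{0}}\,d\eta$, which is independent of $i$; combined with the smoothness bound $\vert u_{0}(z_{1})-u_{0}(z_{2})\vert \lesssim d$ this makes the numerator difference of order $\mathcal{O}(da)+\mathcal{O}(a^{2})$. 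On the other hand, for $\omega^{2}=\omega^{2}_{n_{0}}(1\pm\vert\log(a)\vert^{-h})$ one has $1-\omega^{2}\mu_{0}\tau\lambda_{n_{0}}=\mathcal{O}(\vert\log(a)\vert^{-h})$, while $\omega^{2}\mu_{0}\tau\,a^{2}\sim\vert\log(a)\vert^{-1}$ and $\Phi_{0}=-\tfrac{1}{2\pi}\log d\sim\vert\log(a)\vert^{1-h}$ push the second term of $det^{\star}$ to the same order, so $\vert det^{\star}\vert\gtrsim\vert\log(a)\vert^{-h}$. Dividing, the main part of the difference is $\mathcal{O}\big((da+a^{2})\vert\log(a)\vert^{h}\big)$, which is $o(a)$ because both $d\,\vert\log(a)\vert^{h}$ and $a\,\vert\log(a)\vert^{h}$ tend to $0$ as $a\to0$ (recall $d=\exp(-\vert\log(a)\vert^{1-h})$). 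This is exactly the scheme already used for \ref{intv1intv2}, only with the constant weight $1$ replaced by $e^{(i)}_{n_{0}}$.

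The delicate point, as always near resonance, is that the error terms $(\mathrm{error})_{i}$ inherited from \ref{v&V} must themselves be shown to stay $\mathcal{O}(a)$ even though they arise from inverting a system whose determinant is only of size $\vert\log(a)\vert^{-h}$; concretely one must verify that the off-resonant Fourier modes $n\neq n_{0}$ and the Taylor remainders of $(\varepsilon_{p}-\varepsilon_{0})$ and of $G_{k}$ contribute at most $\mathcal{O}(a)$ after division by $det^{\star}$. This bookkeeping, rather than any new idea, is the main obstacle, and it is handled exactly as in the a priori estimate of Proposition \ref{abc} together with the resonance expansions leading to \ref{v&V}.
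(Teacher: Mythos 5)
Your argument is correct in substance but follows a different path from the paper. The paper's proof does not go back to the modal representation $(\ref{v&V})$ at all: it starts from the already-established identity $(\ref{intv1intv2})$, $\int_{D_{1}}u_{2}\,dx=\int_{D_{2}}u_{2}\,dx+\mathcal{O}(d\,a^{2}\,\vert\log(a)\vert^{h})$, expands each integral over the eigenbasis, bounds the off-resonant modes by H\"older as $\mathcal{O}(a^{2})$, and then divides by $\int_{D_{1}}e_{n_{0}}\,dx\sim a$ to land on $(\ref{v1n0=v2n0})$. You instead subtract the two instances of $(\ref{v&V})$, exploit that $det^{\star}$ is symmetric in $z_{1},z_{2}$, and control the numerator difference by Taylor expansion of $u_{0}$. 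Both routes ultimately rest on the same inversion of the two-particle resonant system (the paper's $(\ref{intv1intv2})$ comes from the Foldy--Lax system, your $(\ref{v&V})$ from the $2\times2$ modal system), so neither is more economical; the paper's version avoids re-estimating the numerator difference because that work was already absorbed into $(\ref{intv1intv2})$, while yours is more self-contained given $(\ref{v&V})$ and makes the role of the common denominator explicit. Your closing worry about the error terms in $(\ref{v&V})$ is moot: that proposition is already proved in the paper with an $\mathcal{O}(a)$ remainder, so you may use it as a black box.

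One quantitative slip: your lower bound $\vert det^{\star}\vert\gtrsim\vert\log(a)\vert^{-h}$ is not justified. Both constituents of $det^{\star}$ are individually of order $\vert\log(a)\vert^{-h}$, and the paper's own determinant computation (which yields $det\sim\vert\log(a)\vert^{-3h}$ for the product $(A-B)(A+B)$ with $A+B\sim\vert\log(a)\vert^{-h}$) shows that they \emph{do} nearly cancel, so that in fact $\vert det^{\star}\vert\sim\vert\log(a)\vert^{-2h}$. This does not break your proof --- with the correct bound the main part of the difference is $\mathcal{O}\big((da+a^{2})\vert\log(a)\vert^{2h}\big)$, which is still $o(a)$ since $d=\exp(-\vert\log(a)\vert^{1-h})$ decays faster than any power of $\vert\log(a)\vert$ --- but the bound as you state it should be corrected or replaced by the factored form $det^{\star}=det/(A+B)$.
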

\begin{proof}
Remember, from $(\ref{intv1intv2})$, that we have:
\begin{equation*}
\int_{D_{1}} u_{2} \, dx = \int_{D_{2}} u_{2} \, dx + \mathcal{O}(d \, a^{2} \, \vert \log(a) \vert^{h}).
\end{equation*}
Write each integral over the basis: 
\begin{equation*}
<u_{2},e^{(1)}_{n_{0}}> \, \int_{D_{1}} e_{n_{0}} \, dx = <u_{2},e^{(2)}_{n_{0}}> \, \int_{D_{1}} e_{n_{0}} \, dx + \mathcal{O}\Bigg( \sum_{i=1 \atop n \neq n_{0}}^{2}  <u_{2},e^{(i)}_{n}> \, \int_{D_{i}} e^{(i)}_{n} \, dx \Bigg) + \mathcal{O}(d \, a^{2} \, \vert \log(a) \vert^{h})
\end{equation*}
Clearly, by Holder inequality, we have 
\begin{equation*}
\Bigg\vert \sum_{i=1 \atop n \neq n_{0}}^{2}  <u_{2},e^{(i)}_{n}> \, \int_{D_{i}} e^{(i)}_{n} \, dx \Bigg\vert \lesssim \Vert u_{0} \Vert \; \Vert 1 \Vert = \mathcal{O}\big( a^{2} \big)
\end{equation*}
and it follows that
\begin{equation*}
<u_{2};e^{(1)}_{n_{0}}>   =   <u_{2};e^{(2)}_{n_{0}}> + \mathcal{O}(a).
\end{equation*}
From $(\ref{v1n0=v2n0})$ we deduce: 
\begin{equation*}
\vert <u_{2};e^{(1)}_{n_{0}}> \vert^{2} = \vert <u_{2};e^{(2)}_{n_{0}}> \vert^{2} + \mathcal{O}(a^{2} \; \vert \log(a) \vert^{h}).  
\end{equation*}
\end{proof}
By lemma $\ref{lemma35}$, we have 
\begin{eqnarray*}
\nonumber
\big(p - p_{0}\big)(t,x) &=& 2 \,\omega^{2} \, \mu_{0} \, \Big[ \int_{D_{1}} e_{n_{0}} \Big]\sum_{i=1}^{2} \, \Re\Bigg[ \frac{\tau \, u_{0}(z_{i})}{det^{\star}} I_{i} \Bigg]- t \, \Im(\tau) \, \vert <u_{2};e^{(2)}_{n_{0}}> \vert^{2} \, \sum_{i=1}^{2} \, \frac{1}{\big(t^{2}-\vert x-z_{i} \vert^{2}\big)^{3/2}} \\ 
&+& \mathcal{O}\big(\vert \log(a) \vert^{\max(-1,2h-2)}\big).
\end{eqnarray*}
We have also: 
\begin{equation*}
\frac{1}{\big(t^{2}-\vert x-z_{1} \vert^{2}\big)^{3/2}} = \frac{1}{\big(t^{2}-\vert x-z_{2} \vert^{2}\big)^{3/2}} \Big( 1 + \mathcal{O}(d) \Big).
\end{equation*}
Then
\begin{equation*}
(p-p_{0})(t,x) = 2 \,\omega^{2} \, \mu_{0} \, \Big[ \int_{D_{1}} e_{n_{0}} \Big]\sum_{i=1}^{2} \, \Re\Bigg[ \frac{\tau \, u_{0}(z_{i})}{det^{\star}}  I_{i} \Bigg]  - 2 t \, \Im(\tau) \,  \, \frac{\vert <u_{2};e^{(2)}_{n_{0}}> \vert^{2}}{\big(t^{2}-\vert x-z_{2} \vert^{2}\big)^{3/2}} 
+ \mathcal{O}\big(\vert \log(a) \vert^{\max(-1,2h-2)}\big).
\end{equation*}
Next, we use the same technique as before by taking two frequencies $\omega_{\pm}^{2} = \omega_{n_{0}}^{2} \pm \vert \log(a) \vert^{-h}$, we get
\begin{eqnarray*}
(p^{\pm}-p_{0})(t,x) &=&  2 \,\omega_{n_{0}}^{2} \, \mu_{0} \, \Big[ \int_{D_{1}} e_{n_{0}} \Big]\sum_{i=1}^{2} \, \Re\Bigg[ \frac{\tau \, u_{0}(z_{i}) \; \; I_{i}}{(1-\omega_{\pm}^{2} \, \mu_{0} \tau \lambda_{n_{0}})-\omega_{\pm}^{2} \, \mu_{0} \tau a^{2} \Phi_{0} (\int_{B} \overline{e}_{n_{0}})^{2}}  \Bigg]  \\
&+& \mathcal{O}\Bigg( \vert \log(a) \vert^{-h} \, \Big[ \int_{D_{1}} e_{n_{0}} \Big]\sum_{i=1}^{2} \, \Re\Bigg[ \frac{\tau \, u_{0}(z_{i}) \; \; I_{i}}{(1-\omega_{\pm}^{2} \, \mu_{0} \tau \lambda_{n_{0}})-\omega_{\pm}^{2} \, \mu_{0} \tau a^{2} \Phi_{0} (\int_{B} \overline{e}_{n_{0}})^{2}}  \Bigg] \Bigg) \\
 &-&  2 t \, \Im(\tau) \, \vert <u_{2};e^{(2)}_{n_{0}}> \vert^{2}  \, \frac{1}{\big(t^{2}-\vert x-z_{2} \vert^{2}\big)^{3/2}} + \mathcal{O}\big(\vert \log(a) \vert^{\max(-1,2h-2)}\big).
\end{eqnarray*}
We estimate the error part as 
\begin{equation*}
 \vert \log(a) \vert^{-h} \, \Big[ \int_{D_{1}} e_{n_{0}} \Big]\sum_{i=1}^{2} \, \Re\Bigg[ \frac{\tau \, u_{0}(z_{i}) \; \; I_{i}}{(1-\omega_{\pm}^{2} \, \mu_{0} \tau \lambda_{n_{0}})-\omega_{\pm}^{2} \, \mu_{0} \tau a^{2} \Phi_{0} (\int_{B} \overline{e}_{n_{0}})^{2}}  \Bigg] \sim {\mathcal{O}(\vert \log(a) \vert^{-1})}.
\end{equation*}
Define $\tilde{p}(t,x)$ as
\begin{equation*}
\tilde{p}(t,x) := (p^{+}-p_{0})(t,x) + \frac{1-\omega_{n_{0}}^{2}}{1+\omega_{n_{0}}^{2}} (p^{-}-p_{0})(t,x), 
\end{equation*}
hence
\begin{eqnarray*}
\tilde{p}(t,x) &=&   \frac{-4}{1+\omega_{n_{0}}^{2}} t \, \Im(\tau) \, \frac{\vert <u_{2};e^{(2)}_{n_{0}}> \vert^{2}}{\big(t^{2}-\vert x-z_{2} \vert^{2}\big)^{3/2}}  + 2 \,\omega_{n_{0}}^{2} \, \mu_{0} \, \Big[ \int_{D_{1}} e_{n_{0}} \Big]\sum_{i=1}^{2} \, \Re\Bigg[\tau \, u_{0}(z_{i}) \; I_{i} \\&&  \Bigg( \frac{1}{(1-\omega_{+}^{2} \, \mu_{0} \tau \lambda_{n_{0}})-\omega_{+}^{2} \, \mu_{0} \tau a^{2} \Phi_{0} (\int_{B} \overline{e}_{n_{0}})^{2}} + \frac{1-\omega_{n_{0}}^{2}}{1+\omega_{n_{0}}^{2}} \frac{1}{(1-\omega_{-}^{2} \, \mu_{0} \tau \lambda_{n_{0}})-\omega_{-}^{2} \, \mu_{0} \tau a^{2} \Phi_{0} (\int_{B} \overline{e}_{n_{0}})^{2}}  \Bigg) \Bigg]  \\
&+&  \mathcal{O}\big(\vert \log(a) \vert^{\max(-1,2h-2)}\big).
\end{eqnarray*}
We compute the following quantity
\begin{eqnarray*}
J &:=&  \frac{1}{(1-\omega_{+}^{2} \, \mu_{0} \tau \lambda_{n_{0}})-\omega_{+}^{2} \, \mu_{0} \tau a^{2} \Phi_{0} (\int_{B} \overline{e}_{n_{0}})^{2}} + \frac{1-\omega_{n_{0}}^{2}}{1+\omega_{n_{0}}^{2}} \frac{1}{(1-\omega_{-}^{2} \, \mu_{0} \tau \lambda_{n_{0}})-\omega_{-}^{2} \, \mu_{0} \tau a^{2} \Phi_{0} (\int_{B} \overline{e}_{n_{0}})^{2}} \\
&=& \frac{2 \, \omega_{n_{0}}^{2} \, \mu_{0} \, \tau \, \vert \log(a) \vert^{-h} \, a^{2}\, \Big[ \tilde{\lambda_{n_{0}}} + \Phi_{0} \, \Big( \int_{B} \overline{e}_{n_{0}} \Big)^{2} \Big]}{(1+\omega_{n_{0}}^{2})^{2}\,\Big[(1-\omega_{+}^{2} \, \mu_{0} \tau \lambda_{n_{0}})-\omega_{+}^{2} \, \mu_{0} \tau a^{2} \Phi_{0} (\int_{B} \overline{e}_{n_{0}})^{2}\Big]\, \Big[(1-\omega_{-}^{2} \, \mu_{0} \tau \lambda_{n_{0}})-\omega_{-}^{2} \, \mu_{0} \tau a^{2} \Phi_{0} (\int_{B} \overline{e}_{n_{0}})^{2}\Big]}
\end{eqnarray*}
hence $J = \mathcal{O}(1)$. Going back to the formula of $\tilde{p}(t,x)$, we obtain: 
\begin{equation*}
\tilde{p}(t,x) =   \frac{-4 \, t \, \Im(\tau)}{1+\omega_{n_{0}}^{2}}  \frac{\vert <u_{2};e^{(2)}_{n_{0}}> \vert^{2}}{\big(t^{2}-\vert x-z_{2} \vert^{2}\big)^{3/2}}  + 2 \,\omega_{n_{0}}^{2} \, \mu_{0} \, \Big[ \int_{D_{1}} e_{n_{0}} \Big]\sum_{i=1}^{2} \, \Re\big[\tau \, u_{0}(z_{i}) \; I_{i} J \big]  +   \mathcal{O}\big(\vert \log(a) \vert^{\max(-1,2h-2)}\big),
\end{equation*}
and 
\begin{equation*}
\bigg\vert 2 \,\omega_{n_{0}}^{2} \, \mu_{0} \, \big[ \int_{D_{1}} e_{n_{0}} \big]\sum_{i=1}^{2} \, \Re\big[\tau \, u_{0}(z_{i}) \; I_{i} J \big] \bigg\vert \leq a \, \vert \tau \vert \, \vert I_{i} \vert = \mathcal{O}\big( \vert \log(a) \vert^{-1} \big)
\end{equation*}
Finally, we have the desired approximation formula
\begin{equation*}
\tilde{p}(t,x) =   \frac{-4}{1+\omega_{n_{0}}^{2}} t \, \Im(\tau) \, \vert <u_{2};e^{(2)}_{n_{0}}> \vert^{2}  \, \frac{1}{\big(t^{2}-\vert x-z_{2} \vert^{2}\big)^{3/2}} + \mathcal{O}\big(\vert \log(a) \vert^{max(-1,2h-2)}\big).
\end{equation*}

\bigskip
\section{A priori estimations}\label{appendixlemma}
\subsection{A priori estimates on the electric field}

\begin{proof}{of \bf{Proposition \ref{abc}}}\\
In order to prove the a priori estimation $(\ref{prioriest})$, we proceed in two steps. First we do it for one single particle and then for multiple particles.
\begin{itemize}
\item[Step 1/] \underline{Case of one particle:} \\
Remember that the eigenvalues and eigenfunctions of the logarithmic operator satisfy 
\begin{equation*}
\int_{D} \Phi_{0}(x,y) \, e_{n}(y) \, dy \,=\, \lambda_{n} \, e_{n}(x) \qquad \quad in \quad D,
\end{equation*}
and after scaling we get, with $\tilde{e}_{n}(\cdot) := e_{n}\left(\frac{\cdot-z}{\varepsilon}\right),$
\begin{equation}\label{eigevfB}
a^{2} \, \Bigg[ \int_{B} \Phi_{0}(\eta,\xi) \, \tilde{e}_{n}(\xi) \, d\xi - \, \frac{1}{2\pi} \, \log(a) \int_{B} \tilde{e}_{n}(\xi) d\xi \Bigg]\,=\, \lambda_{n} \, \tilde{e}_{n}(\eta) \qquad \quad in \quad B.
\end{equation}
Integrating the equation $(\ref{eigevfB})$ over $B$ we obtain 
\begin{equation*}
\int_{B} \int_{B} \Phi_{0}(\eta,\xi) \, \tilde{e_{n}}(\xi) \, d\eta d\xi = \Bigg[\frac{1}{2\pi} \log(a) \vert B \vert \,+\, \frac{\lambda_{n}}{a^{2}} \Bigg]  \,\int_{B} \tilde{e_{n}} d\eta.
\end{equation*} 
Multiplying $(\ref{eigevfB})$ by $\tilde{e_{m}}$ and integrating over $B$ we get: 
\begin{equation*}
\int_{B} \int_{B} \Phi_{0}(\eta,\xi) \, \tilde{e_{n}}(\xi) \,\tilde{e_{m}}(\eta) \, d\eta d\xi - \frac{1}{2\pi} \, \log(a) \int_{B} \tilde{e_{n}}(\xi) d\xi \,\int_{B} \tilde{e_{m}}(\eta) d\eta \, = \, \frac{\lambda_{n}}{a^{2}} \,  \int_{B} \tilde{e_{n}} \, \tilde{e_{m}} d\eta.
\end{equation*} 
Remark that when $m \neq n$, thanks to the fact that $\big\lbrace \tilde{e_{n}} \big\rbrace_{n \in \mathbb{N}} $ forms an orthogonal basis in $\mathbb{L}^{2}(B)$, we obtain  
\begin{equation}\label{whennneqm}
\int_{B} \int_{B} \Phi_{0}(\eta,\xi) \, \tilde{e_{n}}(\xi) \,\tilde{e_{m}}(\eta) \, d\eta d\xi = \frac{1}{2\pi} \, \log(a) \,  \int_{B} \tilde{e_{n}} d\xi \, \int_{B} \tilde{e_{m}} d\xi
\end{equation}
and when $m = n$, we get 
\begin{equation}\label{whenn=m}
\int_{B} \int_{B} \Phi_{0}(\eta,\xi) \, \tilde{e_{n}}(\xi) \,\tilde{e_{n}}(\eta) \, d\eta d\xi - \frac{1}{2\pi} \, \log(a) \Bigg( \int_{B} \tilde{e_{n}} d\xi \Bigg)^{2} \, = \, \frac{\lambda_{n}}{a^{2}} \,  \Vert \tilde{e_{n}} \Vert^{2}.
\end{equation}
After normalisation
\begin{equation}\label{ha}
\int_{B} \int_{B} \Phi_{0}(\eta,\xi) \, \frac{\tilde{e_{n}}(\xi)}{\Vert \tilde{e_{n}}\Vert} \, \frac{\tilde{e_{n}}(\eta)}{\Vert \tilde{e_{n}}\Vert} \, d\eta d\xi = \frac{1}{\Vert \tilde{e_{n}} \Vert^{2}} \Bigg[\frac{1}{2\pi} \, \log(a) \Bigg( \int_{B} \tilde{e_{n}} d\xi \Bigg)^{2} \, + \, \frac{\lambda_{n}}{a^{2}} \,  \Vert \tilde{e_{n}} \Vert^{2} \Bigg].
\end{equation}
We denote $\overline{e}_{n} := \tilde{e}_{n} / \Vert \tilde{e}_{n}\Vert$ the orthonormalized basis in $\mathbb{L}^{2}(B)$, and we set 
\begin{equation}\label{lambdatilde}
\tilde{\lambda_{n}} := \int_{B} \int_{B} \Phi_{0}(\eta,\xi) \, \overline{e}_{n}(\eta) \, \overline{e}_{n}(\xi) \,  \, d\eta \, d\xi,
\end{equation}
from $(\ref{ha})$ and $(\ref{lambdatilde})$ we deduce that
\begin{equation}\label{G}
\tilde{\lambda}_{n} = \frac{\lambda_{n}}{a^{2}} +\frac{1}{2\pi} \log(a) \Bigg(\int_{B} \overline{e}_{n} d\xi\Bigg)^{2}.
\end{equation}
Thanks to L.S.E and Green kernel expansion $(\ref{Gkexpansion})$, we have 
\begin{eqnarray*}
u_{1}(x) &-& \omega^{2} \mu_{0} \tau \, \int_{D} \Big(\Phi_{0}(x,y) -\frac{1}{2\pi} \log(k)(y)+\Gamma  \Big) \, u_{1}(y) \, dy \\ &=& u_{0}(x)  + \omega^{2} \mu_{0} \tau \, \mathcal{O}\left( \int_{D} \vert x-y \vert \, \log(\vert x-y \vert) \, u_{1}(y) \, dy \right) \quad in \quad D.
\end{eqnarray*}
After Taylor expansion of the function $\log(k)$ near the point $z$, we obtain 
\begin{eqnarray*}
u_{1}(x) &-& \omega^{2} \mu_{0} \tau \, \int_{D} \Big(\Phi_{0}(x,y) -\frac{1}{2\pi} \log(k)(z)+\Gamma  \Big) \, u_{1}(y) \, dy \\ &=& u_{0}(x) +   \mathcal{O}\left(\tau \, \int_{D} \vert x-y \vert \, \log(\vert x-y \vert) \, u_{1}(y) \, dy \right) \\ &+& \mathcal{O}\Big(\tau \, \int_{D} \int_{0}^{1} (y-z)\centerdot \nabla \log(k)(z+t(y-z)) dt \, u_{1}(y) \, dy \Big).
\end{eqnarray*}
Now scaling, we have
\begin{eqnarray*}
\tilde{u}_{1}(\eta) & - &  \omega^{2} \mu_{0} \tau \, a^{2} \, \int_{B} \Big(\Phi_{0}(\eta,\xi) - \frac{1}{2\pi} \log(k)(z) + \Gamma  \Big) \, \tilde{u}_{1}(\xi) \, d\xi + \omega^{2} \mu_{0} \tau \, a^{2} \, \frac{1}{2\pi} \; \log(a) \, \int_{B} \tilde{u}_{1} \; d\xi \\ & = &  \tilde{u_{0}}(\eta) + \mathcal{O}\bigg( \tau \, a^{3} \, \int_{B} \vert \eta -\xi \vert \, \log(\vert \eta - \xi \vert) \, \tilde{u}_{1}(\xi) \, d\xi 
 \bigg) + \mathcal{O}\bigg(  \tau \, a^{3} \, \log(a) \, \int_{B} \vert \eta -\xi \vert \, \tilde{u}_{1}(\xi) \, d\xi \bigg) \\
 & +& \mathcal{O}\Big(a^{3} \, \tau \, \int_{B} \tilde{u}_{1} \, d\xi \Big).
\end{eqnarray*}
Using the basis, we obtain
\label{technics}
\begin{eqnarray*}
<\tilde{u}_{1};\overline{e}_{n_{0}}> && \left[1 - \omega^{2} \mu_{0} \tau \, a^{2}  \, \int_{B} \, \int_{B} \Phi_{0}(\eta,\xi)  \, \overline{e}_{n_{0}}(\xi) \, d\xi \overline{e}_{n_{0}}(\eta) \, d\eta +   \frac{\omega^{2} \mu_{0} \tau \, a^{2}}{2\pi} \, \log(a) \,     \left[ \int_{B} \overline{e}_{n_{0}} d\xi \right]^{2} \right] \\ 
& = &  <\tilde{u_{0}};\overline{e}_{n_{0}}> +\mathcal{O}\bigg(  \tau \, a^{3} \, \int_{B} \,\overline{e}_{n_{0}}(\eta) \int_{B} \vert \eta -\xi \vert \, \log(\vert \eta - \xi \vert) \, \tilde{u}_{1}(\xi) \, d\xi d\eta \bigg) \\
 & + & \mathcal{O}\bigg(\tau \, a^{3} \, \log(a) \,\int_{B} \overline{e}_{n_{0}}(\eta) \int_{B} \vert \eta -\xi \vert \, \tilde{u}_{1}(\xi) \, d\xi \, d\eta \bigg) \\ 
 &-&   \omega^{2} \mu_{0} \tau \, a^{2} \, \frac{1}{2\pi}  \, \log(a) \, <1;\overline{e}_{n_{0}}> \, \sum_{n \neq n_{0}} <\tilde{u}_{1};\overline{e}_{n}> \, \int_{B} \overline{e}_{n} d\xi \\ &+& \omega^{2} \mu_{0} \tau \, a^{2} \Big(- \frac{1}{2\pi}\log(k)(z)+\Gamma \Big) \, \int_{B}\tilde{u}_{1} d\xi \,\int_{B} \overline{e}_{n_{0}} d\eta \\ 
& + &  \omega^{2} \mu_{0} \tau \, a^{2} \, \sum_{n \neq n_{0}} <\tilde{u}_{1};\overline{e}_{n}> \int_{B} \, \int_{B} \Phi_{0}(\eta,\xi)  \, \overline{e}_{n}(\xi) \, d\xi \overline{e}_{n_{0}}(\eta) \, d\eta + \mathcal{O}\Big(a^{3} \, \tau \, \int_{B} \tilde{u}_{1} \, d\xi \Big).
\end{eqnarray*}
After simplifications and using  $(\ref{whennneqm})$  and  $(\ref{ha})$   we get
\begin{eqnarray}\label{vVone}
\nonumber
<\tilde{u}_{1};\overline{e}_{n_{0}}>  &=& \frac{1}{\Big[1-\omega^{2} \mu_{0} \tau \, \lambda_{n_{0}} \Big]} \Bigg[ <\tilde{u_{0}};\overline{e}_{n_{0}}> + \omega^{2} \mu_{0} \tau \, a^{2} \Big( -\frac{1}{2\pi} \log(k)(z)+\Gamma \Big) \, \int_{B}\tilde{u}_{1} d\xi \,\int_{B} \overline{e}_{n_{0}}  d\eta \\ \nonumber & + & \mathcal{O}\bigg( \tau \, a^{3} \, \int_{B} \,\overline{e}_{n_{0}}(\eta) \int_{B} \vert \eta -\xi \vert \, \log(\vert \eta - \xi \vert) \, \tilde{u}_{1}(\xi) \, d\xi d\eta \bigg) \\
 & + & \mathcal{O}\bigg(\tau \, a^{3} \, \log(a) \,\int_{B} \overline{e}_{n_{0}}(\eta) \int_{B} \vert \eta -\xi \vert \, \tilde{u}_{1}(\xi) \, d\xi \, d\eta \bigg) +  \mathcal{O}\Big(a^{3} \, \tau \, \int_{B} \tilde{u}_{1} \, d\xi \Big) \Bigg].   
\end{eqnarray}
We take\footnote{The dielectric-resonance that we want to 
excite is $\omega_{n_{0}}$ given by
\begin{equation}\label{exactMieresonance}
\omega_{n_{0}}^{2} =  \frac{1}{\mu_{0} \, \lambda_{n_{0}} \, a^{-2} \, \vert \log(a) \vert^{-1}}.
\end{equation}
} $\tau$ and $\omega$ so that
\begin{equation}\label{H}
\tau \simeq \frac{1}{a^{2} \, \vert \log(a) \vert}  \quad \text{and} \quad \omega^{2} = \frac{\Big(1 \pm \vert \log(a) \vert^{-h}\Big)}{\mu_{0} \, \lambda_{n_{0}} \, a^{-2} \, \vert \log(a) \vert^{-1}}.
\end{equation}
With this choice we have the estimation
\begin{equation*}
\frac{1}{\Big\vert 1-\omega^{2} \mu_{0} \tau \, \lambda_{n_{0}} \,\Big\vert} = \mathcal{O}(\vert \log(a) \vert^{h}).
\end{equation*}
Then
\begin{eqnarray*}
\vert <\tilde{u}_{1};\overline{e}_{n_{0}}> \vert  & \leq & \vert \log(a) \vert^{h} \, \Bigg[ \vert <\tilde{u_{0}};\overline{e}_{n_{0}}> \vert + a \,\vert \log(a) \vert^{-1} \, \Bigg\vert \int_{B} \,\overline{e}_{n_{0}}(\eta) \int_{B} \vert \eta -\xi \vert \, \log(\vert \eta - \xi \vert) \, \tilde{u}_{1}(\xi) \, d\xi d\eta \Bigg\vert \\ & + & a \Bigg\vert \int_{B} \overline{e}_{n_{0}}(\eta) \int_{B} \vert \eta -\xi \vert \, \tilde{u}_{1}(\xi) \, d\xi \, d\eta \Bigg\vert + \vert \log(a) \vert^{-1}  \, \Bigg\vert \int_{B}\tilde{u}_{1} d\xi \,\Bigg\vert \, \Bigg\vert \int_{B} \overline{e}_{n_{0}} d\eta \Bigg\vert + a^{2} \, \vert \log(a) \vert^{-1} \; \Vert \tilde{u}_{1} \Vert  \Bigg]  
\end{eqnarray*}
Obviously the term $\vert <\tilde{u_{0}};\overline{e}_{n_{0}}> \vert$ dominates the others, but we need to check this mathematically by estimating the error part. This last one will be subdivided into three parts. We have  
\begin{itemize}
\item[] \underline{Estimation of} \qquad
$s_{1}  :=    \int_{B} \,\overline{e}_{n_{0}}(\eta) \int_{B} \vert \eta -\xi \vert \, \log(\vert \eta - \xi \vert) \, \tilde{u}_{1}(\xi) \, d\xi d\eta$. 
\begin{eqnarray*}
\vert s_{1} \vert & \leq &  \Vert \overline{e}_{n_{0}} \Vert \; \Bigg( \int_{B} \Big\vert\int_{B} \vert \eta -\xi \vert \, \log(\vert \eta - \xi \vert) \, \tilde{u}_{1}(\xi) \, d\xi \Big\vert^{2} d\eta \Bigg)^{\frac{1}{2}} = \mathcal{O}(\Vert \tilde{u}_{1} \Vert). 
\end{eqnarray*}
\item[] \underline{Estimation of} \quad \qquad $s_{2} := \int_{B} \overline{e}_{n_{0}}(\eta) \int_{B} \vert \eta -\xi \vert  \tilde{u}_{1}(\xi)  d\xi  d\eta$.
\begin{equation*}
\vert s_{2} \vert  \leq   \Vert \overline{e}_{n_{0}} \Vert  \Bigg( \int_{B} \Big\vert\int_{B} \vert \eta -\xi \vert  \tilde{u}_{1}(\xi)  d\xi \Big\vert^{2} d\eta \Bigg)^{\frac{1}{2}} = \mathcal{O}(\Vert \tilde{u}_{1} \Vert). 
\end{equation*}
\item[] \underline{Estimation of} \quad \qquad $s_{3}  :=  \int_{B}\tilde{u}_{1} d\xi \, \int_{B} \overline{e}_{n_{0}} d\eta$.
\begin{equation*}
\vert s_{3} \vert := \Bigg\vert \int_{B}\tilde{u}_{1} d\xi \,\Bigg\vert \, \Bigg\vert \int_{B} \overline{e}_{n_{0}} d\eta \Bigg\vert = \mathcal{O}(\Vert \tilde{u}_{1} \Vert).
\end{equation*}
\end{itemize}
Then
\begin{equation*}
\vert <\tilde{u}_{1};\overline{e}_{n_{0}}> \vert   \leq  \vert \log(a) \vert^{h} \, \Bigg[ \vert <\tilde{u_{0}};\overline{e}_{n_{0}}> \vert +  \Vert \tilde{u}_{1} \Vert\; \Big( a \,\vert \log(a) \vert^{-1}  +  a  +  \vert \log(a) \vert^{-1}+a^{2} \, \vert \log(a) \vert^{-1} \; \Big) \Bigg], 
\end{equation*}
and then
\begin{equation}\label{!n0}   
\vert <\tilde{u}_{1};\overline{e}_{n_{0}}> \vert^{2}   \leq  \vert \log(a) \vert^{2h} \, \Bigg[ \vert <\tilde{u_{0}};\overline{e}_{n_{0}}> \vert^{2} +  \Vert \tilde{u}_{1} \Vert^{2} \; \vert \log(a) \vert^{-2} \Bigg].  
\end{equation}
In what follows, we calculate an estimation of $\underset{n \neq n_{0}}{\sum} \vert <\tilde{u}_{1};\overline{e}_{n}> \vert^{2}$. We star with equation $(\ref{vVone})$, since the other steps are the same, to obtain:
\begin{eqnarray*}
<\tilde{u}_{1};\overline{e}_{n}>  &=& \frac{1}{\Big[1-\omega^{2} \mu_{0} \tau \, \lambda_{n} \Big]} \Bigg[ <\tilde{u_{0}};\overline{e}_{n}> + \mathcal{O}\bigg(\tau \, a^{3} \, \int_{B} \,\overline{e}_{n}(\eta) \int_{B} \vert \eta -\xi \vert \, \log(\vert \eta - \xi \vert) \, \tilde{u}_{1}(\xi) \, d\xi d\eta \bigg) \\ 
& + &  \mathcal{O}\bigg( \tau \, a^{3} \, \log(a) \,\int_{B} \overline{e}_{n}(\eta) \int_{B} \vert \eta -\xi \vert \, \tilde{u}_{1}(\xi) \, d\xi \, d\eta \bigg) + \omega^{2} \mu_{0} \tau \, a^{2} \Big(-\frac{1}{2\pi} \log(k)(z)+\Gamma \Big) \, \int_{B}\tilde{u}_{1} d\xi \,\int_{B} \overline{e}_{n} d\eta \\
&+&  \mathcal{O}\Big(a^{3} \, \tau \, \int_{B} \tilde{u}_{1} \; d\xi \int_{B} \overline{e}_{n} \; d\eta  \Big) \Bigg].   
\end{eqnarray*}
Then 
\begin{eqnarray*}
\sum_{n \neq n_{0}} \vert <\tilde{u}_{1};\overline{e}_{n}> \vert^{2}  &\leq& C^{te} \Bigg[ \sum_{n \neq n_{0}} \vert <\tilde{u_{0}};\overline{e}_{n}> \vert^{2} + a^{2} \, \vert \log(a) \vert^{-2} \sum_{n \neq n_{0}} \Bigg\vert \int_{B} \,\overline{e}_{n}(\eta) \int_{B} \vert \eta -\xi \vert \, \log(\vert \eta - \xi \vert) \, \tilde{u}_{1}(\xi) \, d\xi d\eta \Bigg\vert^{2} \\ 
& + &  a^{2} \,\sum_{n \neq n_{0}} \Bigg\vert \int_{B} \overline{e}_{n}(\eta) \int_{B} \vert \eta -\xi \vert \, \tilde{u}_{1}(\xi) \, d\xi \, d\eta \Bigg\vert^{2} + \vert \log(a) \vert^{-2} \, \Vert \tilde{u}_{1} \Vert^{2} \; \sum_{n \neq n_{0}}  \, \Bigg\vert \int_{B} \overline{e}_{n}\, d\eta \Bigg\vert^{2} \\ 
&+& a^{2} \; \vert \log(a) \vert^{-2} \, \Vert \tilde{u}_{1} \Vert^{2} \; \sum_{n \neq n_{0}}  \, \Bigg\vert \int_{B} \overline{e}_{n}\, d\eta \Bigg\vert^{2} \Bigg].   
\end{eqnarray*}
On the right side, except for the first term, we need to estimate the terms containing series. For this, we have
\begin{equation*}
\sum_{n \neq n_{0}} \Bigg\vert \int_{B} \,\overline{e}_{n}(\eta) \int_{B} \vert \eta -\xi \vert \, \log(\vert \eta - \xi \vert) \, \tilde{u}_{1}(\xi) \, d\xi d\eta \Bigg\vert^{2} \leq \int_{B} \Big\vert \int_{B} \vert \eta -\xi \vert \, \log(\vert \eta - \xi \vert) \, \tilde{u}_{1}(\xi) \, d\xi \Big\vert^{2} \, d\eta, 
\end{equation*}
since the function $\vert \centerdot \vert \, \log(\vert \centerdot \vert)$ is bounded on $B$ we get
\begin{equation*}
\int_{B} \Big\vert \int_{B} \vert \eta -\xi \vert \, \log(\vert \eta - \xi \vert) \, \tilde{u}_{1}(\xi) \, d\xi \Big\vert^{2} \, d\eta \leq  \mathcal{O}(\Vert \tilde{u}_{1} \Vert^{2}).
\end{equation*}
The same argument as before allows to deduce that 
\begin{equation*}
\sum_{n \neq n_{0}} \Bigg\vert \int_{B} \overline{e}_{n}(\eta) \int_{B} \vert \eta -\xi \vert \, \tilde{u}_{1}(\xi) \, d\xi \, d\eta \Bigg\vert^{2} \leq \mathcal{O}(\Vert \tilde{u}_{1} \Vert^{2}). 
\end{equation*}
Obviously we have also
\begin{equation*}
\sum_{n \neq n_{0}}  \, \Bigg\vert \int_{B} \overline{e}_{n}\, d\eta \Bigg\vert^{2} \leq \Vert 1 \Vert^{2}.
\end{equation*}
Hence
\begin{equation}\label{serienneqn0} 
\sum_{n \neq n_{0}} \vert <\tilde{u}_{1};\overline{e}_{n}> \vert^{2} \leq C^{te} \Bigg[ \sum_{n \neq n_{0}} \vert <\tilde{u_{0}};\overline{e}_{n}> \vert^{2} +  \vert \log(a) \vert^{-2} \Vert \tilde{u}_{1}  \Vert^{2} \Bigg]. 
\end{equation}
By adding $(\ref{!n0})$ and $(\ref{serienneqn0})$, we get
\begin{eqnarray*}
\Vert \tilde{u}_{1} \Vert^{2} & = & \vert <\tilde{u}_{1};\overline{e}_{n_{0}}> \vert^{2} + \sum_{n \neq n_{0}} \vert <\tilde{u}_{1};\overline{e}_{n}> \vert^{2} \\ 
 & \leq & \vert \log(a) \vert^{2h} \, \Bigg[ \vert <\tilde{u_{0}};\overline{e}_{n_{0}}> \vert^{2} +  \Vert \tilde{u}_{1} \Vert^{2} \; \vert \log(a) \vert^{-2} \Bigg] \\ 
&+& C^{te} \Bigg[ \sum_{n \neq n_{0}} \vert <\tilde{u_{0}};\overline{e}_{n}> \vert^{2} +  \vert \log(a) \vert^{-2} \Vert \tilde{u}_{1}  \Vert^{2} \Bigg] 
\end{eqnarray*}
hence
\begin{eqnarray*}
\Vert \tilde{u}_{1} \Vert^{2} & \leq & \vert \log(a) \vert^{2h} \, \Vert \tilde{u_{0}} \Vert^{2} + \vert \log(a) \vert^{2h-2} \, \Vert \tilde{u}_{1} \Vert^{2}, \\ 
\Vert \tilde{u}_{1} \Vert^{2} (1-\vert \log(a) \vert^{2h-2}) & \leq & \vert \log(a) \vert^{2h} \, \Vert \tilde{u_{0}} \Vert^{2} 
\end{eqnarray*}
and, as $h < 1$, 
\begin{equation*}
\Vert \tilde{u}_{1} \Vert^{2}  \leq  (1-\vert \log(a) \vert^{2h-2})^{-1} \, \vert \log(a) \vert^{2h} \, \Vert \tilde{u_{0}} \Vert^{2}  \leq  \vert \log(a) \vert^{2h} \, \Vert \tilde{u_{0}} \Vert^{2}, 
\end{equation*}
or
\begin{equation}\label{aprioriestimation}
\Vert u_{1} \Vert_{\mathbb{L}^{2}(D)}  \leq  \vert \log(a) \vert^{h} \, \Vert u_{0} \Vert_{\mathbb{L}^{2}(D)}. 
\end{equation}
The following proposition makes a link between the Fourier coefficient of the generated total field and that of the source field. 
\begin{proposition}\label{Y}
We have
\begin{equation*}\label{equa611}
<u_{1};e_{n_{0}}> = \frac{ <u_{0};e_{n_{0}}>}{(1-\omega^{2} \mu_{0} \tau \, \lambda_{n_{0}})}  + \mathcal{O}(a\,\vert \log(a) \vert^{2h-1}).
\end{equation*}
\end{proposition}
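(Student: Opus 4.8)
The plan is to start from equation (\ref{vVone}), already established in Step~1 of the proof of Proposition \ref{abc}, which expresses $<\tilde{u}_{1};\overline{e}_{n_{0}}>$ on the reference ball $B$ in terms of $<\tilde{u_0};\overline{e}_{n_{0}}>$ together with an explicit ``middle'' term and three $\mathcal{O}(\cdot)$ remainders. The first step is purely a change of variables: with the scaling $\tilde{u}(\eta)=u(z+a\eta)$ and $e_{n}(x)=a^{-1}\overline{e}_{n}((x-z)/a)$ one has $<u_{j};e_{n_{0}}>_{\mathbb{L}^{2}(D)}=a\,<\tilde{u}_{j};\overline{e}_{n_{0}}>_{\mathbb{L}^{2}(B)}$ for $j=0,1$. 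Multiplying (\ref{vVone}) through by $a$ therefore converts the left-hand side into $<u_{1};e_{n_{0}}>$ and the leading term $<\tilde{u_0};\overline{e}_{n_{0}}>$ into $<u_{0};e_{n_{0}}>$, while leaving the prefactor $[1-\omega^{2}\mu_{0}\tau\lambda_{n_{0}}]^{-1}$ untouched.

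The core of the argument is then to show that, after this multiplication by $a$ and after accounting for the size $[1-\omega^{2}\mu_{0}\tau\lambda_{n_{0}}]^{-1}=\mathcal{O}(|\log(a)|^{h})$ of the prefactor (which comes from the near-resonance choice (\ref{H}) and the exact resonance identity (\ref{exactMieresonance})), every contribution in (\ref{vVone}) except $<u_{0};e_{n_{0}}>$ is of order $\mathcal{O}(a\,|\log(a)|^{2h-1})$ or smaller. I would first record the scales $\tau\simeq(a^{2}|\log(a)|)^{-1}$ and $\omega^{2}\mu_{0}\tau a^{2}=\mathcal{O}(|\log(a)|^{-1})$, the boundedness $\log(k)(z)=\mathcal{O}(1)$, $\Gamma=\mathcal{O}(1)$, $\int_{B}\overline{e}_{n_{0}}\,d\eta=\mathcal{O}(1)$, and the key consequence of the a priori bound (\ref{aprioriestimation}): since $u_{0}$ is smooth and $|D|\simeq a^{2}$, one has $\|u_{0}\|_{\mathbb{L}^{2}(D)}=\mathcal{O}(a)$, whence $\|\tilde{u}_{1}\|_{\mathbb{L}^{2}(B)}=a^{-1}\|u_{1}\|_{\mathbb{L}^{2}(D)}=\mathcal{O}(|\log(a)|^{h})$ and in particular $\int_{B}\tilde{u}_{1}\,d\xi=\mathcal{O}(|\log(a)|^{h})$.

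With these estimates, the dominant remainder is the middle term $B:=\omega^{2}\mu_{0}\tau a^{2}\bigl(-\tfrac{1}{2\pi}\log(k)(z)+\Gamma\bigr)\bigl(\int_{B}\tilde{u}_{1}\bigr)\bigl(\int_{B}\overline{e}_{n_{0}}\bigr)=\mathcal{O}(|\log(a)|^{h-1})$; multiplying by $a$ and by the prefactor $\mathcal{O}(|\log(a)|^{h})$ yields exactly the claimed $\mathcal{O}(a\,|\log(a)|^{2h-1})$. The three $\mathcal{O}(\cdot)$ remainders in (\ref{vVone}) are controlled using the elementary bounds $s_{1},s_{2},s_{3}=\mathcal{O}(\|\tilde{u}_{1}\|)$ already obtained in the proof of Proposition \ref{abc}; a direct count of powers shows that, after the same multiplication by $a$ and by $\mathcal{O}(|\log(a)|^{h})$, they are $\mathcal{O}(a^{2}|\log(a)|^{2h-1})$, $\mathcal{O}(a^{2}|\log(a)|^{2h})$ and $\mathcal{O}(a^{2}|\log(a)|^{2h-1})$ respectively, all negligible relative to $\mathcal{O}(a\,|\log(a)|^{2h-1})$ for $a\ll1$. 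Collecting the leading term and all remainders gives the assertion.

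The only delicate point is the bookkeeping: one must confirm that the term $B$—which is precisely the contribution retained inside the denominator in the finer expansion (\ref{equa511})—is here genuinely absorbable into the error at the coarser level $\mathcal{O}(a\,|\log(a)|^{2h-1})$, and that it dominates the other discarded terms. Equivalently, the statement can be derived directly from (\ref{equa511}): the extra denominator term there is $\mathcal{O}(|\log(a)|^{-1})$ while $1-\omega^{2}\mu_{0}\tau\lambda_{n_{0}}=\mathcal{O}(|\log(a)|^{-h})$ with $h<1$, so replacing the full denominator by $1-\omega^{2}\mu_{0}\tau\lambda_{n_{0}}$ costs a relative error $\mathcal{O}(|\log(a)|^{h-1})$, and multiplying by $<u_{0};e_{n_{0}}>/(1-\omega^{2}\mu_{0}\tau\lambda_{n_{0}})=\mathcal{O}(a\,|\log(a)|^{h})$ reproduces the same $\mathcal{O}(a\,|\log(a)|^{2h-1})$. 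This second route serves as an independent check on the power counting.
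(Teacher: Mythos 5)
Your proposal is correct and follows essentially the same path as the paper: the paper likewise starts from (\ref{vVone}), but first absorbs your ``middle'' term into the denominator to obtain the finer formula (\ref{Z}) with error $\mathcal{O}(a\,\vert\log(a)\vert^{h-1})$, and then expands that denominator using the fact that the extra term is a relative $\mathcal{O}(\vert\log(a)\vert^{h-1})$ correction --- which is precisely your ``second route.'' Your primary route, bounding the middle term directly via $\int_{B}\tilde{u}_{1}\,d\xi=\mathcal{O}(\vert\log(a)\vert^{h})$ from the a priori estimate, is a harmless coarsening of the same bookkeeping and yields the identical error $\mathcal{O}(a\,\vert\log(a)\vert^{2h-1})$.
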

\begin{proof}
We write
\begin{equation*}
\int_{B} \tilde{u}_{1} d\xi = <\tilde{u}_{1};\overline{e}_{n_{0}}> \int_{B} \overline{e}_{n_{0}} d\xi + \sum_{n \neq n_{0}} <\tilde{u}_{1};\overline{e}_{n}> \int_{B} \overline{e}_{n} d\xi.
\end{equation*} 
Use this representation in $(\ref{vVone})$ and rearrange the equation to get 
\begin{eqnarray*}
<\tilde{u}_{1};\overline{e}_{n_{0}}>  &=& \frac{1}{\Bigg[1-\omega^{2} \mu_{0} \tau \, \lambda_{n_{0}} -  \omega^{2} \mu_{0} \tau \, a^{2} \Big(-\frac{1}{2\pi} \log(k)(z)+\Gamma \Big) \,\Big( \int_{B} \overline{e}_{n_{0}}  d\eta \Big)^{2}\Bigg]} \Bigg[ <\tilde{u_{0}};\overline{e}_{n_{0}}> \\
&+&\omega^{2} \mu_{0} \tau \, a^{2} \Big( \frac{-1}{2\pi} \log(k)+\Gamma \Big) \, \,\int_{B} \overline{e}_{n_{0}}  d\eta \; \sum_{n \neq n_{0}} <\tilde{u}_{1};\overline{e}_{n} > \, \int_{B} \overline{e}_{n} d\xi + \mathcal{O}\Big(a^{3} \, \tau \, \int_{B} \tilde{u}_{1} \, d\xi \Big) \\
&+& \omega^{2} \mu_{0} \tau \, a^{3} \, \int_{B} \,\overline{e}_{n_{0}}(\eta) \int_{B} \vert \eta -\xi \vert \, \log(\vert \eta - \xi \vert) \, \tilde{u}_{1}(\xi) \, d\xi d\eta \\ &+& \omega^{2} \mu_{0} \tau \, a^{3} \, \log(a) \,\int_{B} \overline{e}_{n_{0}}(\eta) \int_{B} \vert \eta -\xi \vert \, \tilde{u}_{1}(\xi) \, d\xi \, d\eta\Bigg].   
\end{eqnarray*}
We need to estimate the four last terms between brackets. We have
\begin{equation*}
\Big\vert \omega^{2} \mu_{0} \tau \, a^{2} \Big( \frac{-1}{2\pi} \log(k)+\Gamma \Big) \, \,\int_{B} \overline{e}_{n_{0}}  d\eta \; \sum_{n \neq n_{0}} <\tilde{u}_{1};\overline{e}_{n} > \, \int_{B} \overline{e}_{n} d\xi \Big\vert \lesssim  \tau \, a^{2} \; \Vert \tilde{u_{0}} \Vert \; \Vert 1 \Vert  = \mathcal{O}(\vert \log(a) \vert^{-1}).
\end{equation*}
Next, use Holder inequality and the a priori estimate to obtain  
\begin{equation}\label{up}
\Big\vert \omega^{2} \mu_{0} \tau \, a^{3} \, \int_{B} \,\overline{e}_{n_{0}}(\eta) \int_{B} \vert \eta -\xi \vert \, \log(\vert \eta - \xi \vert) \, \tilde{u}_{1}(\xi) \, d\xi d\eta \Big\vert \lesssim  \mathcal{O}(a \vert \log(a) \vert^{h-1}).
\end{equation}
Remark that the following term 
\begin{equation*}
\omega^{2} \mu_{0} \tau \, a^{3} \, \log(a) \,\int_{B} \overline{e}_{n_{0}}(\eta) \int_{B} \vert \eta -\xi \vert \, \tilde{u}_{1}(\xi) \, d\xi \, d\eta ,  
\end{equation*}
up to multiplicative constant $\vert \log(a) \vert$ behaves as $(\ref{up})$, then we estimate it as $\mathcal{O}(a \vert \log(a) \vert^{h})$, and obviously we have 
\begin{equation*}
a^{3} \, \tau \, \int_{B} \tilde{u}_{1} \, d\xi  \sim \mathcal{O}\big( a \, \vert \log(a) \vert^{h-1} \big).
\end{equation*}
Finally, we obtain
\begin{equation}\label{Z}
<u_{1};e_{n_{0}}> = \frac{ <u_{0};e_{n_{0}}>}{\Bigg[1-\omega^{2} \mu_{0} \tau \, \lambda_{n_{0}} -  \omega^{2} \mu_{0} \tau \, \Big( -\frac{1}{2\pi} \log(k)(z)+\Gamma \Big) \,\Big( \int_{D} e_{n_{0}}  d\eta \Big)^{2}\Bigg]}  + \mathcal{O}(a\,\vert \log(a) \vert^{h-1}), 
\end{equation}
or in the following form
\begin{eqnarray*}
<u_{1};e_{n_{0}}> &=& \frac{ <u_{0};e_{n_{0}}>}{(1-\omega^{2} \mu_{0} \tau \, \lambda_{n_{0}}) \, \Bigg[1 - \displaystyle\frac{\omega^{2} \mu_{0} \tau \, a^{2} \Big( -\frac{1}{2\pi} \log(k)(z)+\Gamma \Big) \,\Big( \int_{B} \overline{e}_{n_{0}}  d\eta \Big)^{2}}{(1-\omega^{2} \mu_{0} \tau \, \lambda_{n_{0}})}\Bigg]}  + \mathcal{O}(a\,\vert \log(a) \vert^{h-1})\\
&=& \frac{ <u_{0};e_{n_{0}}>}{(1-\omega^{2} \mu_{0} \tau \, \lambda_{n_{0}}) \, \Big[1 + \mathcal{O}(\vert \log(a) \vert^{h-1}) \Big]}  + \mathcal{O}(a\,\vert \log(a) \vert^{h-1}) \\ &=& \frac{ <u_{0};e_{n_{0}}>}{(1-\omega^{2} \mu_{0} \tau \, \lambda_{n_{0}})}  + \mathcal{O}(a\,\vert \log(a) \vert^{2h-1}) 
\end{eqnarray*}
which ends the proof.
\end{proof}
\item[Step 2/] \underline{Case of multiples particles:} 
Consider the L.S.E for multiple particles
\begin{equation}\label{LSED}
v_{i}(x) - \omega^{2} \, \mu_{0} \, \tau \int_{D_{i}} G_{k}(x;y) v_{i}(y) \, dy - \omega^{2} \, \mu_{0} \, \tau \, \sum_{m \neq i}^{M} \int_{D_{m}}  G_{k}(x;y) v_{m}(y) \, dy = u_{0}(x), \quad x \in D_{i}.
\end{equation}
We use the expansion formula $(\ref{Gkexpansion})$ of $G_{k}(x;y)$ to write
\begin{eqnarray*}
v_{i}(x) & - & \omega^{2} \, \mu_{0} \, \tau \int_{D_{i}} \Phi_{0}(x,y) v_{i}(y) \, dy = u_{0}(x) + \omega^{2} \, \mu_{0} \, \tau \,\Big(-\frac{1}{2\pi} \log(k)(z_{i})+\Gamma \Big) \int_{D_{i}} v_{i} dy + \mathcal{O}\Big( \tau \, a \, \int_{D_{i}} v \; dy \Big) \\ & + & \omega^{2} \, \mu_{0} \, \tau \, \int_{D_{i}} \vert x-y \vert \, \log
(\vert x-y \vert) \, v_{i}(y) dy + \omega^{2} \, \mu_{0} \, \tau \, \sum_{m \neq i}^{M} \int_{D_{m}} \Phi_{0}(x,y) v_{m}(y) \, dy + \mathcal{O}\Big( \tau \, a \, \sum_{m \neq i}^{M} \int_{D_{m}} v \; dy \Big) \\ &+& \omega^{2} \, \mu_{0} \, \tau \, \sum_{m \neq i}^{M} \Big(-\frac{1}{2\pi} \log(k)(z_{m})+\Gamma \Big) \, \int_{D_{m}} v_{m} dy + \omega^{2} \, \mu_{0} \, \tau \, \sum_{m \neq i}^{M} \int_{D_{m}} \vert x-y \vert \, \log(\vert x-y \vert) v_{m}(y) \, dy.
\end{eqnarray*}
Scaling, we obtain
\begin{eqnarray}\label{equa710}
\nonumber
\tilde{v}_{i}(\eta) & - & \omega^{2} \, \mu_{0} \, \tau \, a^{2}  \int_{B} \Phi_{0}(\eta,\xi) \tilde{v}_{i}(\xi) \, d\xi = u_{0}(z_{i}+a\, \eta) - \omega^{2} \, \mu_{0} \, \tau \, a^{2} \, \frac{1}{2\pi} \, \log(a) \, \int_{B} \tilde{v}_{i} \, d\xi + \mathcal{O}\Big(\tau \, a^{3} \, \int_{B} \tilde{v}_{i} \; d\xi \Big) \\ \nonumber
&+&    \omega^{2} \, \mu_{0} \, \tau \,\Big(-\frac{1}{2\pi} \log(k)(z_{i})+\Gamma \Big) \, a^{2} \, \int_{B} \tilde{v}_{i} d\xi  +  \omega^{2} \, \mu_{0} \, \tau \,a^{3} \, \int_{B} \vert \eta-\xi \vert \, \log
(\vert \eta - \xi \vert) \, \tilde{v}_{i}(\xi) d\xi \\ \nonumber 
& + &  \omega^{2} \, \mu_{0} \, \tau \,a^{3} \, \log(a) \int_{B} \vert \eta-\xi \vert \,\tilde{v}_{i}(\xi) d\xi - \frac{1}{2 \pi} \; \omega^{2} \, \mu_{0} \, \tau \,a^{2} \, \sum_{m \neq i}^{M} \int_{B}  \log \vert (z_{i}-z_{m})+a(\eta - \xi) \vert \, \tilde{v}_{m}(\xi) \, d\xi \\ \nonumber
&+& \omega^{2} \, \mu_{0} \, \tau \,\,a^{2} \, \sum_{m \neq i}^{M} \Big( \frac{-1}{2\pi}\log(k)(z_{m})+\Gamma \Big)  \int_{B} \tilde{v}_{m}\, d\xi + \mathcal{O}\Big( \tau \, a^{3} \, \sum_{m \neq i}^{M} \int_{B} \tilde{v}_{m} \; d\xi \Big)  \\ 
&+& \omega^{2} \, \mu_{0} \, \tau \,a^{2} \, \sum_{m \neq i}^{M} \int_{B} \vert (z_{i}-z_{m})+a(\eta - \xi)\vert \, \log(\vert (z_{i}-z_{m})+a(\eta - \xi)\vert) \tilde{v}_{m}(\xi) \, d\xi.
\end{eqnarray}
We recall that 
\begin{equation*}
A_{0} \, v(x) = \int_{B} \Phi_{0}(x,y) \; v(y) \, dy, 
\end{equation*}
and denote 
\begin{equation*}
T \, v(x) := \int_{B}  v(y) \, dy, \quad x \in B.
\end{equation*}
Then: 
\begin{equation*}
\Big[I - \omega^{2} \, \mu_{0} \, \tau \, a^{2} \, A_{0} + \omega^{2} \, \mu_{0} \, \tau \, a^{2} \, \frac{1}{2\pi} \, \log(a) \, T \Big] \tilde{v}_{i} = u_{0}(z_{i}+ a\, \cdot) + \omega^{2} \, \mu_{0} \, \tau \,\Big( -\frac{1}{2\pi} \log(k)(z_{i})+\Gamma \Big) \, a^{2} \, \int_{B} \tilde{v}_{i} d\xi  
\end{equation*}
\begin{eqnarray*}
\phantom \qquad \qquad \qquad \qquad &+&  \omega^{2} \, \mu_{0} \, \tau \,a^{3} \, \int_{B} \vert \eta-\xi \vert \, \log
(\vert \eta - \xi \vert) \, \tilde{v}_{i}(\xi) d\xi  +   \omega^{2} \, \mu_{0} \, \tau \,a^{3} \, \log(a) \int_{B} \vert \eta-\xi \vert \,\tilde{v}_{i}(\xi) d\xi  \\ 
&-&\frac{1}{2\pi} \, \omega^{2} \, \mu_{0} \, \tau \,a^{2} \, \sum_{m \neq i}^{M} \int_{B}  \log \vert (z_{i}-z_{m})+a(\eta - \xi) \vert \, \tilde{v}_{m}(\xi) \, d\xi 
\\ &+& \omega^{2} \, \mu_{0} \, \tau \,a^{2} \, \sum_{m \neq i}^{M} \int_{B} \vert (z_{i}-z_{m})+a(\eta - \xi)\vert \, \log(\vert (z_{i}-z_{m})+a(\eta - \xi)\vert) \tilde{v}_{m}(\xi) \, d\xi \\
&+& \omega^{2} \, \mu_{0} \, \tau \,a^{2} \, \sum_{m \neq i}^{M} \Big(-\frac{1}{2\pi}\log(k)(z_{m})+\Gamma\Big) \, \int_{B} \tilde{v}_{m}\, d\xi + \mathcal{O}\Big( \tau \, a^{3} \, \sum_{m = 1}^{M} \int_{B} \tilde{v}_{m} \; d\xi \Big). 
\end{eqnarray*}
Also, we note by 
\begin{equation*}
\mathfrak{Res}(A_{0};T) := [I - \omega^{2} \, \mu_{0} \, \tau \, a^{2} \, A_{0} + \omega^{2} \, \mu_{0} \, \tau \, a^{2} \, \frac{1}{2\pi} \, \log(a) \, T \Big]^{-1}.
\end{equation*}
\begin{remark}
In the definition of the operator $\mathfrak{Res}(A_{0};T)$ we cannot neglect the operator $T$ since it scales with the same order as $A_{0}$.
\end{remark}
Then
\begin{eqnarray*}
 \tilde{v}_{i} &=& \mathfrak{Res}(A_{0};T) \, (u_{0}(z_{i}+a \; \cdot))  +    \omega^{2} \, \mu_{0} \, \tau \,\Big( \frac{-1}{2\pi}\log(k)(z_{i})+\Gamma \Big) \, a^{2} \, \int_{B} \tilde{v}_{i} d\xi \; \mathfrak{Res}(A_{0};T)(1)\\
 &+&  \omega^{2} \, \mu_{0} \, \tau \,a^{3} \,\mathfrak{Res}(A_{0};T) \Big( \int_{B} \vert \eta-\xi \vert \, \log
(\vert \eta - \xi \vert) \, \tilde{v}_{i}(\xi) d\xi \Big) \\
&+& \omega^{2} \, \mu_{0} \, \tau \,a^{3} \, \log(a) \; \mathfrak{Res}(A_{0};T) \Big( \int_{B} \vert \eta-\xi \vert \,\tilde{v}_{i}(\xi) d\xi \Big) \\
&-& \frac{1}{2\pi} \omega^{2} \, \mu_{0} \, \tau \,a^{2} \, \sum_{m \neq i}^{M} \; \mathfrak{Res}(A_{0};T) \Big( \int_{B}  \log \vert (z_{i}-z_{m})+a(\eta - \xi) \vert \, \tilde{v}_{m}(\xi) \, d\xi \Big) \\ 
&+& \omega^{2} \, \mu_{0} \, \tau \,a^{2} \, \sum_{m \neq i}^{M}  \Big(-\frac{1}{2\pi}\log(k)(z_{m})+\Gamma\Big) \, \int_{B} \tilde{v}_{m}\, d\xi \;\; \mathfrak{Res}(A_{0};T)(1)  \\
 &+& \omega^{2} \, \mu_{0} \, \tau \,a^{2} \, \sum_{m \neq i}^{M} \; \mathfrak{Res}(A_{0};T) \Big( \int_{B} \vert (z_{i}-z_{m})+a(\eta - \xi)\vert \, \log(\vert (z_{i}-z_{m})+a(\eta - \xi)\vert) \tilde{v}_{m}(\xi) \, d\xi \Big) \\ 
&+& \mathcal{O}\Big( \tau \, a^{3} \, \sum_{m = 1}^{M} \int_{B} \tilde{v}_{m} \; d\xi \Big) \; \mathfrak{Res}(A_{0};T)(1).
\end{eqnarray*}
Using the a priori estimate $(\ref{aprioriestimation})$, we obtain 
\begin{eqnarray*}
\Vert \tilde{v}_{i} \Vert & \leq & \vert \log(a) \vert^{h} \Vert u_{0}(z_{i}+a \; \cdot) \Vert + a \, \vert \log(a) \vert^{h-1} \Vert \tilde{v}_{i} \Vert +  \, a^{2} \, \Vert \tilde{v}_{i} \Vert \; \vert \log(a) \vert^{h} \; \Vert 1 \Vert \\ &+&  \,a^{3} \, \vert \log(a) \vert^{h} \; \Big\Vert \int_{B} \vert \eta-\xi \vert \, \log(\vert \eta - \xi \vert) \, \tilde{v}_{i}(\xi) d\xi \Big\Vert  +  \,a^{3} \,\vert \log(a) \vert \; \vert \log(a) \vert^{h} \Big\Vert \int_{B} \vert \eta-\xi \vert \,\tilde{v}_{i}(\xi) d\xi \Big\Vert \\ &+& \,a^{2} \, \vert \log(a) \vert^{h} \; \sum_{m \neq i}^{M} \; \Big\Vert \int_{B}  \log \vert (z_{i}-z_{m})+a(\eta - \xi) \vert \, \tilde{v}_{m}(\xi) \, d\xi \Big\Vert + \,a^{2} \, \vert \log(a) \vert^{h} \; \Vert 1 \Vert \; \sum_{m \neq i}^{M}  \Vert \tilde{v}_{m}\, \Vert \\ &+&  \,a^{2} \,\vert \log(a) \vert^{h} \sum_{m \neq i}^{M} \Big\Vert \int_{B} \vert (z_{i}-z_{m})+a(\eta - \xi)\vert \, \log(\vert (z_{i}-z_{m})+a(\eta - \xi)\vert) \tilde{v}_{m}(\xi) \, d\xi \Big\Vert \\ 
&+& a \, \vert \log(a) \vert^{h-1} \, \sum_{m = 1}^{M} \Vert \tilde{v}_{m} \Vert ,
\end{eqnarray*}
and 
\begin{eqnarray*}
\Big\Vert \int_{B}  \log \vert (z_{i}-z_{m})+a(\eta - \xi) \vert \, \tilde{v}_{m}(\xi) \, d\xi \Big\Vert^{2} &=& \int_{B} \, \Big\vert \int_{B}  \log \vert (z_{i}-z_{m})+a(\eta - \xi) \vert \, \tilde{v}_{m}(\xi) \, d\xi \Big\vert^{2} \,  d\eta \\ 
& \leq & \int_{B} \,  \int_{B} \Big\vert \log \vert (z_{i}-z_{m})+a(\eta - \xi) \vert \,\Big\vert^{2} d\xi  \,  d\eta \, \; \Vert \tilde{v}_{m} \Vert^{2}. 
\end{eqnarray*}
Hence
\begin{equation*}
\Big\Vert \int_{B}  \log \vert (z_{i}-z_{m})+a(\eta - \xi) \vert \, \tilde{v}_{m}(\xi) \, d\xi \Big\Vert \lesssim \log(1/d_{im}) \; \Vert \tilde{v}_{m} \Vert.
\end{equation*}
The same calculus allows to obtain 
\begin{equation*}
\Big\Vert \int_{B} \vert (z_{i}-z_{m})+a(\eta - \xi)\vert \, \log(\vert (z_{i}-z_{m})+a(\eta - \xi)\vert) \tilde{v}_{m}(\xi) \, d\xi \Big\Vert \lesssim d_{im} \, \log(1/d_{im}) \; \Vert \tilde{v}_{m} \Vert.
\end{equation*}
Gathering these estimates, we have
\begin{eqnarray*}
\Vert \tilde{v}_{i} \Vert & \leq & \vert \log(a) \vert^{h} \Vert u_{0}(z_{i}+a \; \cdot) \Vert  +  \Big[ a^{3}  \; \vert \log(a) \vert^{h} \; +  \,a^{3} \, \vert \log(a) \vert^{h}  +  \,a^{3} \; \vert \log(a) \vert^{1+h}  +  \,a \; \vert \log(a) \vert^{h-1} \Big] \Vert \tilde{v}_{i} \Vert \\ 
&+& \Big[ a^{2} \; \vert \log(a) \vert^{h} \; \log(1/d)   + \,a^{2} \, \vert \log(a) \vert^{h} \; \Vert 1 \Vert \;   + a^{2} \, \vert \log(a) \vert^{h}  + a \, \vert \log(a) \vert^{h-1} \Big] \sum_{m \neq i}^{M}  \Vert \tilde{v}_{m}\, \Vert. 
\end{eqnarray*}
Then
\begin{equation*}
\Vert \tilde{v}_{i} \Vert  \leq  \vert \log(a) \vert^{h} \Vert u_{0}(z_{i}+a \; \cdot) \Vert + \,a \; \vert \log(a) \vert^{h-1} \; \Vert \tilde{v}_{i} \Vert + \,a \,\vert \log(a) \vert^{h-1}  \; \sum_{m \neq i}^{M} \Vert  \tilde{v}_{m} \Vert, 
\end{equation*}
or
\begin{equation*}
\Vert \tilde{v}_{i} \Vert \, (1-\,a \; \vert \log(a) \vert^{h-1})   \leq  \vert \log(a) \vert^{h} \Vert u_{0}(z_{i}+a \; \cdot) \Vert  + \,a \,\vert \log(a) \vert^{h-1}  \; \sum_{m \neq i}^{M} \Vert  \tilde{v}_{m} \Vert, 
\end{equation*}
hence
\begin{eqnarray*}
\Vert \tilde{v}_{i} \Vert_{\mathbb{L}^{2}(B)} & \leq & \vert \log(a) \vert^{h} \Vert u_{0}(z_{i}+a \; \cdot) \Vert_{\mathbb{L}^{2}(B)}  + \,a \,\vert \log(a) \vert^{h-1}  \; \sum_{m \neq i}^{M} \Vert  \tilde{v}_{m} \Vert_{\mathbb{L}^{2}(B)} \\
\Vert \tilde{v}_{i} \Vert_{\mathbb{L}^{2}(B)}^{2} & \leq & \vert \log(a) \vert^{2h} \Vert u_{0}(z_{i}+a \; \cdot) \Vert_{\mathbb{L}^{2}(B)}^{2}  + \,a^{2} \,\vert \log(a) \vert^{2h-2} \;  \;M \;  \sum_{m \neq i}^{M} \Vert  \tilde{v}_{m} \Vert_{\mathbb{L}^{2}(B)}^{2} \\
\Vert \tilde{v}_{i} \Vert_{\mathbb{L}^{2}(B)}^{2} & \leq & \vert \log(a) \vert^{2h} \Vert u_{0}(z_{i}+a \; \cdot) \Vert_{\mathbb{L}^{2}(B)}^{2}  + \,a^{2} \,\vert \log(a) \vert^{2h-2}   \; M \,  \Vert  \tilde{u} \Vert_{(\Pi \; \mathbb{L}^{2}(B))}^{2}, 
\end{eqnarray*}
we sum up to $M$, to obtain
\begin{eqnarray*}
\Vert \tilde{u} \Vert_{(\Pi \; \mathbb{L}^{2}(B))}^{2} & \leq & \vert \log(a) \vert^{2h} \Vert \tilde{u}_{0} \Vert_{(\Pi \; \mathbb{L}^{2}(B))}^{2}  + M^{2} \,a^{2} \,\vert \log(a) \vert^{2h-2}   \;   \Vert  \tilde{u} \Vert_{(\Pi \; \mathbb{L}^{2}(B))}^{2} \\
(1-M^{2} \,a^{2} \,\vert \log(a) \vert^{2h-2} ) \, \Vert \tilde{u} \Vert_{(\Pi \; \mathbb{L}^{2}(B))}^{2} & \leq & \vert \log(a) \vert^{2h} \Vert \tilde{u}_{0} \Vert_{(\Pi  \; \mathbb{L}^{2}(B))}^{2} \\
\Vert \tilde{u} \Vert_{(\Pi \; \mathbb{L}^{2}(B))}^{2}  & \leq & \vert \log(a) \vert^{2h} \Vert \tilde{u}_{0} \Vert_{(\Pi \; \mathbb{L}^{2}(B))}^{2}. 
\end{eqnarray*}
We obtain after scaling back
\begin{equation}\label{apmp}
\Vert u \Vert_{\mathbb{L}^{2}(D)} \leq  \vert \log(a) \vert^{h} \Vert u_{0} \Vert_{\mathbb{L}^{2}(D)}. 
\end{equation}
\end{itemize} 
\end{proof}
In the next proposition, which is analogous to proposition $(\ref{Y})$, we estimate the Fourier coefficient of the total field for dimer particles when $n \neq n_{0}$.
\begin{proposition}\label{X}
For $n \neq n_{0}$, we have
\begin{equation}\label{W}
<u_{2};e^{(i)}_{n}> = \frac{1}{(1 -  \omega^{2} \, \mu_{0} \, \tau  \, \lambda_{n})} \Big[ < u_{0},e^{(i)}_{n}> + \mathcal{O}(\vert \log(a) \vert^{-h}) \,\vert <1,e^{(i)}_{n}> \vert \Big], \; i=1,2.
\end{equation} 
\end{proposition}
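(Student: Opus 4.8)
The plan is to mirror the derivation of Proposition~\ref{Y}, but to project the two-particle Lippmann--Schwinger equation onto the non-resonant modes $\overline e_n$, $n\neq n_0$, for which the amplifying denominator $(1-\omega^2\mu_0\tau\lambda_{n_0})$ is replaced by $(1-\omega^2\mu_0\tau\lambda_n)$, a quantity bounded away from zero. Concretely, I would start from the scaled system (\ref{equa710}) for $\tilde v_i$, $i=1,2$, take the inner product with $\overline e_n$, and use the self-adjointness of $A_0$ together with the spectral identity (\ref{G}), i.e. $\tilde\lambda_n=\lambda_n/a^2+\frac{1}{2\pi}\log(a)(\int_B\overline e_n)^2$, to collapse the diagonal term $-\omega^2\mu_0\tau a^2\big[\langle A_0\tilde v_i,\overline e_n\rangle-\frac{1}{2\pi}\log(a)\langle\tilde v_i,1\rangle\langle 1,\overline e_n\rangle\big]$ into the clean factor $1-\omega^2\mu_0\tau\lambda_n$ multiplying $\langle\tilde v_i,\overline e_n\rangle$. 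This is the exact mechanism already used to pass from (\ref{equa710}) to (\ref{vVone}), with the orthogonality relation (\ref{whennneqm}) handling the cross-mode pieces; the only bookkeeping change is that the index $n_0$ is replaced by a generic $n\neq n_0$.

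After this reduction the equation reads, schematically,
\begin{equation*}
(1-\omega^2\mu_0\tau\lambda_n)\,\langle\tilde v_i,\overline e_n\rangle=\langle\tilde u_0,\overline e_n\rangle+\omega^2\mu_0\tau a^2\Big(-\tfrac{1}{2\pi}\log k(z_i)+\Gamma\Big)\Big(\int_B\tilde v_i\Big)\Big(\int_B\overline e_n\Big)+\mathcal R_i+\mathcal C_i,
\end{equation*}
where $\mathcal R_i$ gathers the intra-particle Taylor remainders (the $|\eta-\xi|\log|\eta-\xi|$ and $|\eta-\xi|$ kernels, and the $\nabla\log k$ term) and $\mathcal C_i$ gathers the inter-particle interactions $\sum_{m\neq i}$. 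The next step is to observe that, to leading order, each correction is a constant multiple in $\eta$ of $\int_B\overline e_n$, hence proportional to $\langle 1,\overline e_n\rangle$; so the whole right-hand side, apart from the source $\langle\tilde u_0,\overline e_n\rangle$, must be shown to be $\mathcal{O}(|\log a|^{-h})\,|\langle 1,\overline e_n\rangle|$. I would estimate these using $\tau\sim(a^2|\log a|)^{-1}$, $\omega^2\mu_0\tau a^2=\mathcal{O}(|\log a|^{-1})$, the a priori bound (\ref{apmp}) giving $\|\tilde v_m\|=\mathcal{O}(|\log a|^h)$, and the distance condition $d\geq\exp(-|\log a|^{1-h})$ from Lemma~\ref{d>exp}, which controls the logarithmic kernel $\log(1/d_{im})\lesssim|\log a|^{1-h}$ and keeps the Foldy--Lax system well-conditioned. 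Dividing by the non-resonant, $\mathcal{O}(1)$ factor $1-\omega^2\mu_0\tau\lambda_n$ and rescaling back to $D_i$ (via $\langle u_2,e_n^{(i)}\rangle=a\langle\tilde v_i,\overline e_n\rangle$, $\langle u_0,e_n^{(i)}\rangle=a\langle\tilde u_0,\overline e_n\rangle$, $\langle 1,e_n^{(i)}\rangle=a\int_B\overline e_n$) then produces (\ref{W}).

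The step I expect to be the main obstacle is precisely the bookkeeping of the coupling terms against $\langle 1,e_n^{(i)}\rangle$ at the sharp order $|\log a|^{-h}$. The self-coupling through the average $\int_B\tilde v_i$ and, especially, the inter-particle contributions $\mathcal C_i$ are delicate, because $\int_B\tilde v_i$ inherits the resonant amplification $\mathcal{O}(|\log a|^h)$ from the $n_0$-mode while the inter-particle logarithm contributes a further $|\log a|^{1-h}$; one must verify that these enhancements are compensated by the prefactor $\omega^2\mu_0\tau a^2=\mathcal{O}(|\log a|^{-1})$, and that no resonant leakage from mode $n_0$ into mode $n\neq n_0$ survives beyond the claimed order. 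This is the genuinely two-particle part of the argument, and it is where the distance hypothesis (\ref{Condinv}) and the a priori estimate must be combined most carefully; the remaining intra-particle remainders $\mathcal R_i$ are then controlled verbatim as in the proof of Proposition~\ref{Y}.
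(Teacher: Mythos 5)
Your plan reproduces the paper's own proof: the paper likewise takes the scalar product of the scaled two-particle system (\ref{equa710}) with $\overline{e}_{n}$, expands $\int_{B}\overline{e}_{n}\int_{B}\Phi_{0}\tilde{v}_{i}$ over the basis so that, via (\ref{ha}) and (\ref{whennneqm}), the $\frac{1}{2\pi}\log(a)$ contributions of $A_{0}$ cancel exactly against the averaging term $\frac{1}{2\pi}\log(a)\,T$ (the ``bold'' and series terms in the paper), leaving the clean factor $(1-\omega^{2}\mu_{0}\tau\lambda_{n})$, and then bounds the remaining self- and inter-particle couplings --- each proportional to $\int_{B}\overline{e}_{n}$ to leading order --- by $\mathcal{O}(\vert\log(a)\vert^{-h})\,\vert\langle 1,\overline{e}_{n}\rangle\vert$ using the a priori estimate (\ref{apmp}), $\omega^{2}\mu_{0}\tau a^{2}=\mathcal{O}(\vert\log(a)\vert^{-1})$, and the distance condition. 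The delicate bookkeeping you single out (the resonant amplification of $\int_{B}\tilde{v}_{i}$ against the prefactor and the inter-particle logarithm) is precisely the step the paper itself dispatches in one line ``with the help of Taylor's formula,'' so your proposal is faithful to the paper's argument in both structure and level of detail.
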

\begin{proof}
First of all, recall that $v_{m} = u_{|_{D_{m}}}, m=1,2$ and let $n \neq n_{0}$. Take the scalar  product of $(\ref{equa710})$ with respect to $\overline{e}^{(i)}_{n}$, $i=1,2$,  to obtain 
\begin{eqnarray*}
<\tilde{v}_{1};\overline{e}_{n}> & - & \omega^{2} \, \mu_{0} \, \tau \, a^{2} \, \int_{B} \overline{e}_{n}(\eta) \int_{B} \Phi_{0}(\eta,\xi) \tilde{v}_{1}(\xi) \, d\xi \, d\eta \\ &=& < \tilde{u}_{0},\overline{e}_{n}> - \omega^{2} \, \mu_{0} \, \tau \, a^{2} \, \frac{1}{2\pi} \, \log(a) \, \int_{B} \tilde{v}_{1} \, d\xi \, \int_{B} \overline{e}_{n} d\eta \\
&+& \omega^{2} \, \mu_{0} \, \tau \,a^{2} \, \Bigg[ a \,\int_{B} \, \overline{e}_{n}(\eta) \int_{B} \vert \eta-\xi \vert \, \log(\vert \eta - \xi \vert) \, \tilde{v}_{1}(\xi) d\xi d\eta \\ 
 &+&  a \, \log(a) \int_{B} \, \overline{e}_{n}(\eta) \int_{B} \vert \eta-\xi \vert \,\tilde{v}_{1}(\xi) d\xi \, d\eta + \Big( \frac{-1}{2\pi}\log(k)+\Gamma \Big) \,  \int_{B} \big(\tilde{v}_{1}+\tilde{v}_{2} \big) \, d\xi \, \int_{B} \overline{e}_{n} d\eta \\ 
&-& \frac{1}{2 \pi}  \,\int_{B} \, \overline{e}_{n}(\eta)  \int_{B}  \log \vert (z_{1}-z_{2})+a(\eta - \xi) \vert \, \tilde{v}_{2}(\xi) \, d\xi \, d\eta + \mathcal{O}\Big(  a \, \int_{B} \big( \tilde{v}_{1} + \tilde{v}_{2} \big) \, d\xi \Big) \\
&+& \int_{B} \overline{e}_{n}(\eta) \int_{B} \vert (z_{1}-z_{2})+a(\eta - \xi)\vert \, \log(\vert (z_{1}-z_{2})+a(\eta - \xi)\vert) \tilde{v}_{2}(\xi) \, d\xi \, d\eta \Bigg]_{:=\textit{error}}.
\end{eqnarray*}
The $\textit{error}$ part, with the help of Taylor's formula, behaves as $\mathcal{O}\big( \vert \log(a) \vert^{1-h} \big) \vert <1,\overline{e}_{n}> \vert $ and we can write 
\begin{eqnarray*}
\int_{B} \overline{e}_{n} \int_{B} \Phi_{0} \tilde{v}_{1} \, d\xi \, d\eta &=& <\tilde{v}_{1};\overline{e}_{n}> \, \int_{B} \overline{e}_{n} \int_{B} \Phi_{0} \overline{e}_{n} \, d\xi \, d\eta + \sum_{j \neq n} <\tilde{v}_{1};\overline{e}_{j}> \, \int_{B} \overline{e}_{n} \int_{B} \Phi_{0} \overline{e}_{j}  d\xi \, d\eta 
\end{eqnarray*}
\begin{eqnarray*}
\phantom \quad  &\stackrel{(\ref{ha})}=& <\tilde{v}_{1};\overline{e}_{n}> \,\bigg[ \frac{\lambda_{n}}{a^{2}} + \boldsymbol{\frac{1}{2\pi} \log(a) \Big(\int_{B} \overline{e}_{n} d\eta\Big)^{2}} \bigg]   \stackrel{(\ref{whennneqm})} + \, \frac{1}{2\pi} \, \log(a) \, \int_{B} \overline{e}_{n} d\eta \sum_{j \neq n} <\tilde{v}_{1};\overline{e}_{j}>  \int_{B} \overline{e}_{j} d\eta.
\end{eqnarray*}
we plug all this in the previous equation to obtain 
\begin{eqnarray*}
<\tilde{v}_{1};\overline{e}_{n}> & - & \boldsymbol{\omega^{2} \, \mu_{0} \, \tau \, a^{2}} \, \Bigg[\boldsymbol{<\tilde{v}_{1};\overline{e}_{n}>} \, \int_{B} \overline{e}_{n}(\eta) \int_{B} \Phi_{0}(\eta,\xi) \overline{e}_{n}(\xi) \, d\xi \, d\eta  \\ 
&& \qquad \qquad \quad + \sum_{j \neq n} <\tilde{v}_{1};\overline{e}_{j}> \, \int_{B} \overline{e}_{n}(\eta) \int_{B} \Phi_{0}(\eta,\xi) \overline{e}_{j}(\xi) \, d\xi \, d\eta \Bigg] = < \tilde{u}_{0},\overline{e}_{n}> \\ &-&   \boldsymbol{\omega^{2} \, \mu_{0} \, \tau \, a^{2} \, \frac{1}{2\pi} \, \log(a)} \, \Bigg[\boldsymbol{<\tilde{v}_{1};\overline{e}_{n}> \,\int_{B} \overline{e}_{n} d\eta} + \sum_{j \neq n} <\tilde{v}_{1};\overline{e}_{j}> \, \int_{B}  \overline{e}_{j} \, d\xi \, \Bigg] \, \boldsymbol{\int_{B} \overline{e}_{n} d\eta} \\ &+& \mathcal{O}(\vert \log(a) \vert^{-h}) \, \vert <1,\overline{e}_{n}> \vert.
\end{eqnarray*}
Next, we cancel the two terms given by series and those written with \textbf{bold symbol} and scale back the obtained formula to get $(\ref{W})$.  
\end{proof}
The result in $(\ref{W})$ also applies to the case $n = n_{0}$ with an error term of order $\mathcal{O}\big(\vert \log(a) \vert^{-h}\big)$.\\
The next proposition improves the error term by improving the denominator term. 
\begin{proposition}
We have
\begin{equation}\label{v&V}
<u_{2};e^{(i)}_{n_{0}}> = \frac{<u_{0};e^{(i)}_{n_{0}}>}{(1-\omega^{2} \, \mu_{0} \, \tau \, \lambda_{n_{0}}) - \omega^{2} \, \mu_{0} \, \tau \, a^{2} \, \Phi_{0}(z_{1};z_{2}) \Big( \int_{B} \overline{e}_{n_{0}} \Big)^{2} } \;  + \mathcal{O}(a), \qquad i=1,2.
\end{equation}
\end{proposition}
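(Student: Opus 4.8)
The plan is to follow the single-particle argument of Proposition \ref{Y}, but this time to carry the inter-particle coupling all the way into the denominator instead of discarding it. The starting point is the scaled Lippmann--Schwinger system (\ref{equa710}) for the pair $\tilde v_1,\tilde v_2$. Fixing $i=1$ (the case $i=2$ being identical by the symmetry of the configuration), I would take the $\mathbb{L}^2(B)$ scalar product of (\ref{equa710}) with $\overline{e}_{n_0}$. The self Newtonian term is handled exactly as in the derivation leading to (\ref{vVone}): combining the relation (\ref{G}) between $\tilde\lambda_{n_0}$ and $\lambda_{n_0}$ with the $\log(a)$ contribution via (\ref{ha}) and the orthogonality (\ref{whennneqm}) produces the leading resonant factor $\big(1-\omega^2\mu_0\tau\lambda_{n_0}\big)<\tilde v_1;\overline{e}_{n_0}>$ on the left-hand side.

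The genuinely new ingredient is the cross term coming from the summand $m\neq 1$ in (\ref{equa710}), namely $-\tfrac{1}{2\pi}\omega^2\mu_0\tau a^2\int_B\log\vert(z_1-z_2)+a(\eta-\xi)\vert\,\tilde v_2(\xi)\,d\xi$ tested against $\overline{e}_{n_0}$. I would Taylor-expand the logarithmic kernel about $z_1-z_2$, writing $\log\vert(z_1-z_2)+a(\eta-\xi)\vert=\log\vert z_1-z_2\vert+\mathcal{O}\big(a\,d^{-1}\big)$, which is legitimate since $d\sim a^{\vert\log(a)\vert^{-h}}\gg a$. The leading constant equals $-2\pi\,\Phi_0(z_1;z_2)$ and factors out of the $\xi$-integral, leaving $\omega^2\mu_0\tau a^2\,\Phi_0(z_1;z_2)\int_B\tilde v_2\,d\xi$. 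Replacing $\int_B\tilde v_2\,d\xi$ by $<\tilde v_2;\overline{e}_{n_0}>\int_B\overline{e}_{n_0}\,d\eta$, up to terms controlled by the $n\neq n_0$ bound (\ref{W}) and the a priori estimate (\ref{apmp}), this becomes $\omega^2\mu_0\tau a^2\,\Phi_0(z_1;z_2)\big(\int_B\overline{e}_{n_0}\big)^2<\tilde v_2;\overline{e}_{n_0}>$.

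At this stage the identity couples $<\tilde v_1;\overline{e}_{n_0}>$ to $<\tilde v_2;\overline{e}_{n_0}>$, and to close it I would invoke Lemma \ref{lemma35}, i.e. (\ref{v1n0=v2n0}), which gives $<u_2;e^{(1)}_{n_0}>=<u_2;e^{(2)}_{n_0}>+\mathcal{O}(a)$. Substituting $<\tilde v_2;\overline{e}_{n_0}>=<\tilde v_1;\overline{e}_{n_0}>+\mathcal{O}(a)$ lets me move the coupling term to the left-hand side, producing precisely the corrected denominator $(1-\omega^2\mu_0\tau\lambda_{n_0})-\omega^2\mu_0\tau a^2\Phi_0(z_1;z_2)\big(\int_B\overline{e}_{n_0}\big)^2$. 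Solving for $<\tilde v_1;\overline{e}_{n_0}>$ and scaling back then yields (\ref{v&V}).

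The remaining work is an error audit. The self constant terms $\omega^2\mu_0\tau a^2(-\tfrac{1}{2\pi}\log(k)(z_1)+\Gamma)(\int_B\overline{e}_{n_0})^2$ are $\mathcal{O}(\vert\log(a)\vert^{-1})$, hence strictly smaller than both retained denominator terms, which are $\mathcal{O}(\vert\log(a)\vert^{-h})$ at the chosen scaling (\ref{H}), so they fall into the error; likewise the $\vert\eta-\xi\vert\log\vert\eta-\xi\vert$ smoothing terms and the subleading $\mathcal{O}(a\,d^{-1})$ part of the cross-kernel, weighted by $\tau a^2$ and the a priori bound, drop below the resonant scale. The main obstacle is exactly this closure step: the resummation works only because the particles are close enough that $\Phi_0(z_1;z_2)\sim\vert\log(a)\vert^{1-h}$ dominates the $\mathcal{O}(1)$ self-terms, and because Lemma \ref{lemma35} identifies the two coefficients to within $\mathcal{O}(a)$ — this is what guarantees both that the coupling may be absorbed into the denominator and that the final error is the sharp $\mathcal{O}(a)$ rather than the cruder $\mathcal{O}(\vert\log(a)\vert^{-h})$ of the remark following Proposition \ref{X}.
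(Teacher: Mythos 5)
Your proposal is correct and sets up the same starting point as the paper --- projecting the two-particle Lippmann--Schwinger equation $(\ref{equa710})$ onto $\overline{e}_{n_{0}}$ and identifying the cross term as $\omega^{2}\mu_{0}\tau a^{2}\Phi_{0}(z_{1};z_{2})\big(\int_{B}\overline{e}_{n_{0}}\big)^{2}\langle\tilde{v}_{2};\overline{e}_{n_{0}}\rangle$ --- but it diverges at the closure step. The paper keeps \emph{both} projected equations, assembles the symmetric $2\times 2$ system $(\ref{algsystm})$, lower-bounds its determinant $(A-B)(A+B)\sim\vert\log(a)\vert^{-3h}$ using the resonance choice $(\ref{H})$ and $d\sim a^{\vert\log(a)\vert^{-h}}$, inverts exactly, and only then uses the closeness of the \emph{incident} field, $\langle\tilde{u}_{0};\overline{e}^{(2)}_{n_{0}}\rangle=\langle\tilde{u}_{0};\overline{e}^{(1)}_{n_{0}}\rangle+\mathcal{O}(d)$, to collapse $(Ab_{1}+Bb_{2})/(A^{2}-B^{2})$ to $b_{1}/(A-B)$. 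You instead close the single equation $i=1$ by importing Lemma $\ref{lemma35}$, i.e.\ closeness of the \emph{total-field} coefficients. This is legitimate and, importantly, not circular: Lemma $\ref{lemma35}$ rests on $(\ref{intv1intv2})$, which is obtained from the Foldy--Lax inversion of Section 2 and nowhere uses $(\ref{v&V})$; moreover the substitution error is $B\cdot\mathcal{O}(1)=\mathcal{O}(\vert\log(a)\vert^{-h})$ in scaled variables, the same order as the right-hand-side errors already present, so the final $\mathcal{O}(a)$ survives. What your route buys is that you never need to control the full determinant, only the single corrected factor $A-B$; what it costs is that the spectral estimate $(\ref{v&V})$ becomes logically downstream of the Foldy--Lax analysis, whereas the paper derives it self-containedly from the L.S.E.\ and the spectral decomposition. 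Two things you should still state explicitly: the acyclicity of the dependence on Lemma $\ref{lemma35}$ just described, and a quantitative lower bound $\vert A-B\vert\gtrsim\vert\log(a)\vert^{-h}$ for the denominator --- both you and the paper divide by it at the last step, but the paper's determinant computation only directly controls the product $(A-B)(A+B)$, so this bound deserves its own line in either argument.
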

\begin{proof}
In order to prove equality $(\ref{v&V})$ we take a scalar product with respect to $\overline{e}_{n_{0}}$ at the equation $(\ref{equa710})$, and after simplifications, we get:
\begin{equation}\label{algsystm}
\begin{bmatrix} 
(1-\omega^{2}\,\mu_{0}\,\lambda_{n_{0}}\, \tau) & -\omega^{2}\,\mu_{0}\,\tau\,a^{2} \, \Phi_{0}\, \Big( \int_{B} \overline{e}_{n_{0}} \Big)^{2} \\
-\omega^{2}\,\mu_{0}\,\tau\,a^{2} \, \Phi_{0} \, \Big( \int_{B} \overline{e}_{n_{0}} \Big)^{2} & (1-\omega^{2}\,\mu_{0}\,\lambda_{n_{0}}\, \tau) 
\end{bmatrix}
\begin{bmatrix} 
<\tilde{u}_{2};\overline{e}^{(1)}_{n_{0}}>  \\
\\
<\tilde{u}_{2};\overline{e}^{(2)}_{n_{0}}>  
\end{bmatrix}
=
\begin{bmatrix} 
<\tilde{u}_{0};\overline{e}^{(1)}_{n_{0}}> + \mathcal{O}(\vert \log(a) \vert^{-h}) \\
\\
<\tilde{u}_{0};\overline{e}^{(2)}_{n_{0}}>  + \mathcal{O}(\vert \log(a) \vert^{-h})
\end{bmatrix}
\end{equation}
We denote by $det$ the determinant of the last matrix, i.e 
\begin{equation}\label{defdet}
det = \Big(1-\omega^{2}\,\mu_{0}\,\lambda_{n_{0}}\, \tau\Big)^{2} - \Big( \omega^{2}\,\mu_{0}\,\tau\,a^{2} \, \Phi_{0} \, \Big( \int_{B} \overline{e}_{n_{0}} \Big)^{2} \Big)^{2}, \quad \text{where} \quad \Phi_{0} = \Phi_{0}(z_{1};z_{2})
\end{equation}
Next, we check that when we are close to the resonance the determinant $det \neq 0$. For this, and by construction of $\omega^{2}$, we have
\begin{equation*}
1 - \omega^{2} \mu_{0} \tau \lambda_{n_{0}} = \mp \vert \log(a) \vert^{-h},
\end{equation*}
and the fact that \; $d \sim a^{\vert \log(a) \vert^{-h}}$ implies that $ \tau a^{2} \Phi_{0}(z_{1},z_{2}) \sim \frac{1}{2\pi} \, \vert \log(a) \vert^{-h}$. Plug this in $(\ref{defdet})$  to obtain 
\begin{equation*}
det = \vert \log(a) \vert^{-2h} \, \Bigg[1 - \bigg( \omega^{2} \mu_{0} \, \frac{1}{2\pi}  \, \big( \int \overline{e}_{n_{0}} \big)^{2} \bigg)^{2} \Bigg] 
\stackrel{(\ref{H})}= \vert \log(a) \vert^{-2h} \, \left[1 -  \frac{\left( 1 \pm \vert \log(a) \vert^{-h} \right)}{\left( 1+\frac{\tilde{\lambda}_{n_{0}} \vert \log(a) \vert^{-1}}{\frac{1}{2\pi}  \, \big( \int \overline{e}_{n_{0}} \big)^{2}}\right)^{2}}  \,   \right]
\end{equation*}
from {\bf{Hypotheses}\ref{hyp}},  we deduce that  
\begin{equation*}
\left(\frac{\tilde{\lambda}_{n_{0}} \vert \log(a) \vert^{-1}}{\frac{1}{2\pi}  \, \big( \int \overline{e}_{n_{0}} \big)^{2}}\right) \sim \vert \log(a) \vert^{-1},
\end{equation*}
then  
\begin{equation*}
det = \vert \log(a) \vert^{-2h} \left[1 - \left( 1 \pm \vert \log(a) \vert^{-h} \right) \left( 1 + \vert \log(a) \vert^{-1} \right) \right] \sim \vert \log(a) \vert^{-3h}.
\end{equation*}
Since $det \neq 0$, the algebraic system $(\ref{algsystm})$ is invertible. We invert it and  
use the fact that
\begin{equation*}
<\tilde{u}_{0};\overline{e}^{(2)}_{n_{0}}>  = <\tilde{u}_{0};\overline{e}^{(1)}_{n_{0}}> + \mathcal{O}(d),
\end{equation*}
to obtain
\begin{equation}\label{equa825}
<\tilde{u}_{2};\overline{e}^{(1)}_{n_{0}}>  =  \frac{<\tilde{u}_{0};\overline{e}^{(1)}_{n_{0}}>}{(1-\omega^{2}\,\mu_{0}\,\tau\,\lambda_{n_{0}})-\omega^{2}\, \mu_{0}\, \tau \, a^{2} \, \Phi_{0}(z_{1};z_{2}) \, \Big( \int_{B} \overline{e}_{n_{0}} \Big)^{2}}
 +  \mathcal{O}\big(1\big),
\end{equation}
and, after scaling, we get $(\ref{v&V})$.
\end{proof}
\subsection{Estimation of the scattering coefficient $\textbf{C}$}\
\\From $(\ref{A})$ we have: 
\begin{equation*}
w =  \omega^{2} \, \mu_{0} \, \tau \Big[I - \omega^{2} \, \mu_{0} \, \tau \, A_{0} \Big]^{-1}(1)  \quad \text{or} \quad   \frac{1}{\omega^{2} \, \mu_{0} \, \tau} \; \Big[I - \omega^{2} \, \mu_{0} \, \tau \, A_{0} \Big](w) = 1.  
\end{equation*} 
Hence
\begin{equation*}
<1,e_{n}>  =   \frac{1}{\omega^{2} \, \mu_{0} \, \tau} <e_{n} ;  \big[I - \omega^{2} \, \mu_{0} \, \tau \, A_{0} \big](w)>   =  \frac{1}{\omega^{2} \, \mu_{0} \, \tau} \;  \Big[<e_{n},w> - \omega^{2} \, \mu_{0} \, \tau \; \lambda_{n} <e_{n},w> \Big]
\end{equation*}  
and then
\begin{equation}\label{A1}
<w,e_{n}> = \frac{\omega^{2} \, \mu_{0} \, \tau}{1-\omega^{2} \, \mu_{0} \, \tau \; \lambda_{n}} <1,e_{n}>.
\end{equation}
The next lemma uses $(\ref{A1})$ to gives a precision about the value of $\textbf{C}$. 
\begin{lemma} 
The coefficient $\textbf{C}$ can be approximated as
\begin{equation}\label{F}
\textbf{C} =   \frac{\omega^{2} \, \mu_{0} \, \tau}{(1-\omega^{2} \, \mu_{0} \, \tau \, \lambda_{n_{0}})} \, \Big( \int_{D} e_{n_{0}} \Big)^{2} + \mathcal{O}(\vert \log(a) \vert^{-1}). 
\end{equation}
\end{lemma}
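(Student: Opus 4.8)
The plan is to expand $w$ in the orthonormal eigenbasis $(e_n)$ of $A_0$ in $\mathbb{L}^2(D)$ and to read off $\textbf{C} = \int_D w\,dx = <w,1>$ coefficient by coefficient. Writing $w = \sum_n <w,e_n>\,e_n$ and inserting the explicit coefficients $(\ref{A1})$, namely $<w,e_n> = \frac{\omega^2\mu_0\tau}{1-\omega^2\mu_0\tau\lambda_n}<1,e_n>$, Parseval's identity gives
\begin{equation*}
\textbf{C} = <w,1> = \sum_n <w,e_n>\,<e_n,1> = \sum_n \frac{\omega^2\mu_0\tau}{1-\omega^2\mu_0\tau\lambda_n}\Big(\int_D e_n\,dx\Big)^2,
\end{equation*}
where I use that the logarithmic potential operator $A_0$ is real symmetric, so the $e_n$ can be taken real and $<1,e_n><e_n,1> = (\int_D e_n\,dx)^2$. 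Isolating the term $n=n_0$ produces exactly the announced leading term $\frac{\omega^2\mu_0\tau}{1-\omega^2\mu_0\tau\lambda_{n_0}}(\int_D e_{n_0})^2$, so the lemma reduces to showing that the remainder $R := \sum_{n\neq n_0}\frac{\omega^2\mu_0\tau}{1-\omega^2\mu_0\tau\lambda_n}(\int_D e_n\,dx)^2$ is $\mathcal{O}(\vert\log(a)\vert^{-1})$.

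To estimate $R$ I would first collect the relevant scales. By the choice $(\ref{H})$ of $\omega$ and $\tau$ one has $\omega^2\mu_0\tau\lambda_{n_0} = 1\pm\vert\log(a)\vert^{-h}$, hence $\omega^2\mu_0\tau \sim \lambda_{n_0}^{-1}\sim (a^2\vert\log(a)\vert)^{-1}$. Scaling back to the reference domain $B$ as in $(\ref{G})$, with $\overline{e}_n := \tilde e_n/\Vert\tilde e_n\Vert$ the orthonormal basis of $\mathbb{L}^2(B)$, gives $\int_D e_n\,dx = a\int_B\overline{e}_n\,d\eta$, so $(\int_D e_n\,dx)^2 = a^2(\int_B\overline{e}_n\,d\eta)^2$. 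Granting the uniform lower bound $\vert 1-\omega^2\mu_0\tau\lambda_n\vert\geq c>0$ for all $n\neq n_0$, each summand is controlled by
\begin{equation*}
\Big\vert\frac{\omega^2\mu_0\tau}{1-\omega^2\mu_0\tau\lambda_n}\Big(\int_D e_n\Big)^2\Big\vert \lesssim \frac{1}{a^2\vert\log(a)\vert}\,a^2\Big(\int_B\overline{e}_n\Big)^2 = \frac{1}{\vert\log(a)\vert}\Big(\int_B\overline{e}_n\,d\eta\Big)^2,
\end{equation*}
and summing over $n\neq n_0$ with Parseval's identity $\sum_n(\int_B\overline{e}_n\,d\eta)^2 = \sum_n\vert<1,\overline{e}_n>\vert^2 = \Vert 1\Vert_{\mathbb{L}^2(B)}^2 = \vert B\vert$ yields $R = \mathcal{O}(\vert\log(a)\vert^{-1})$, which is $(\ref{F})$.

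The main obstacle is precisely this uniform lower bound $\vert 1-\omega^2\mu_0\tau\lambda_n\vert\geq c>0$ for $n\neq n_0$, that is, the separation of the excited resonance from all the others. I would split the remaining spectrum into two families using Hypotheses $\ref{hyp}$ and $(\ref{G})$: for the average-zero eigenfunctions ($\int_B\overline{e}_n = 0$) one has $\lambda_n\sim a^2$, so $\omega^2\mu_0\tau\lambda_n\sim\vert\log(a)\vert^{-1}\to 0$ and the denominator tends to $1$ (in fact these terms carry vanishing weight, since $\int_D e_n = a\int_B\overline{e}_n = 0$); for the nonzero-average eigenvalues other than $n_0$, $\lambda_n\sim a^2\vert\log(a)\vert$ with a constant separated from that of $\lambda_{n_0}$, so that $\omega^2\mu_0\tau\lambda_n = (\lambda_n/\lambda_{n_0})(1\pm\vert\log(a)\vert^{-h})$ stays bounded away from $1$. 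This is exactly where the fact that $\omega$ is tuned to the single resonance $\omega_{n_0}$, rather than to a cluster, enters; once this separation is secured, the remainder estimate above is routine.
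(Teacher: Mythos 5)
Your proposal is correct and follows essentially the same route as the paper: expand $\textbf{C}=\sum_n <w,e_n><1,e_n>$, insert $(\ref{A1})$, isolate the $n_0$ term, and bound the remainder by Parseval together with the fact that $\omega$ is away from the other resonances (the paper writes this as $\vert\sum_{n\neq n_0}\cdots\vert\le\sum_n\vert<1,e_n>\vert^2=\Vert 1\Vert^2_{\mathbb{L}^2(D)}=\mathcal{O}(a^2)$, which after multiplying by $\omega^2\mu_0\tau\sim a^{-2}\vert\log(a)\vert^{-1}$ gives the stated $\mathcal{O}(\vert\log(a)\vert^{-1})$). Your extra paragraph justifying the uniform separation $\vert 1-\omega^2\mu_0\tau\lambda_n\vert\ge c$ for $n\neq n_0$ is a welcome elaboration of a point the paper merely asserts.
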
 
\begin{proof}
We use the definition of $\textbf{C}$, given by $(\ref{R})$, to write
\begin{equation*}
\textbf{C} := \int_{D} w \, dx = \sum_{n}  <w,e_{n}> \, <1,e_{n}>,
\end{equation*}
apply $(\ref{A1})$ to obtain
\begin{equation*}
\textbf{C}  =  \omega^{2} \, \mu_{0} \, \tau \, \Bigg[ \frac{1}{(1-\omega^{2} \, \mu_{0} \, \tau \, \lambda_{n_{0}})} \, \Big( \int_{D} e_{n_{0}} \Big)^{2} + \sum_{n \neq n_{0}}  \frac{1}{(1-\omega^{2} \, \mu_{0} \, \tau \, \lambda_{n})} \, \Big( \int_{D} e_{n} \Big)^{2}  \Bigg], 
\end{equation*}
and, since the frequency $\omega$ is near $\omega_{n_{0}}$, and hence away from the other resonances we have
\begin{equation*}
\Big\vert \sum_{n \neq n_{0}}  \frac{1}{(1-\omega^{2} \, \mu_{0} \, \tau \, \lambda_{n})} \, \Big( \int_{D} e_{n} \Big)^{2} \Big\vert \leq \sum_{n} \vert <1,e_{n}> \vert^{2} = \Vert 1 \Vert_{\mathbb{L}^{2}(D)} = \mathcal{O}\big(a^{2}\big). 
\end{equation*} 

\end{proof}
From $(\ref{F})$, we see that  
\begin{equation*}\label{B}
\textbf{C} \sim \vert \log(a) \vert^{h-1}.
\end{equation*}
We deduce also the following formula: 
\begin{equation}\label{C}
(1-\omega^{2} \, \mu_{0} \, \tau \, \lambda_{n_{0}}) = \textbf{C}^{-1} \, \omega^{2} \, \mu_{0} \, \tau \, \Big( \int_{D} e_{n_{0}} \Big)^{2} + \mathcal{O}(\vert \log(a) \vert^{-2h}).
\end{equation}

\section{Appendix}\label{the hypotheses-justification}
 To motivate the natural character of the hypotheses stated in {\bf{Hypotheses}} \ref{hyp}, let us make the following observations: 
\begin{enumerate}
\item[a)] We prove that the upper bound of $\lambda_{n}$ is of order $a^{2} \, \vert \log(a) \vert$. For this, 
recalling and rescaling $(\ref{U})$ we obtain, see section \ref{appendixlemma}, in particular (\ref{G}), for $a<<1,$
\begin{equation}\label{T}
\lambda_{n} = a^{2} \, \Big( {\tilde{\lambda}_{n}} + \frac{1}{2} \vert \log(a) \vert (\int_{B}\bar{e_n}(\xi) d\xi)^2 \Big), 
\end{equation}
where 
$${\tilde{\lambda}_{n}}:=\frac{1}{\Vert\tilde{e_n}\Vert_{\mathbb{L}^2(B)}^2}\int_B LP(\tilde{e_n})(\eta)~\tilde{e_n}(\eta) d\eta $$
 and $\tilde{e}_{n}$ is the scaled of any eigenfunction $e_n$ corresponding to $\lambda_n$.
Take the  absolute value in $(\ref{T})$ to obtain  
\begin{equation*}
\vert \lambda_{n} \vert \leq a^{2} \, \left( \vert {\tilde{\lambda}}_{n} \vert + \frac{1}{2} \vert \log(a) \vert \, \vert <1;\bar{e}_{n}> \vert^{2} \right).
\end{equation*}
From the definition of $\tilde{\lambda}_{n}$, see $(\ref{lambdatilde})$, we have $\vert \tilde{\lambda}_{n} \vert \leq \Vert \Phi_{0} \Vert_{\mathbb{L}^{2}(B \times B)} < \infty $ 
and we use the \\ Cauchy-Schwartz inequality to obtain
\begin{equation*}
\vert \lambda_{n} \vert \leq a^{2} \, \left( \Vert \Phi_{0} \Vert_{\mathbb{L}^{2}(B \times B)} + \frac{1}{2} \vert \log(a) \vert \, \vert B \vert^{2} \right) \lesssim a^{2} \, \vert \log(a) \vert.  
\end{equation*}
\item[b)] For the lower bound, the situation is less clear. Nevertheless, we have the following results:
\begin{enumerate}
\item[b.1)] When the shape is a disc of radius $a$, we refer to (Theorem 4.1, \cite{RG}) for the existence of a sequence of eigenvalues given by
\begin{equation*}
\lambda_{k,j} = a^{2} \, \left[\mu_{j}^{(k)}\right]^{-2}, \quad k=0,1,2,\cdots \; j=1,2,\cdots
\end{equation*} 
and the corresponding eigenfunctions given by
\begin{equation*}
u_{k,j}(r,\varphi) = \LARGE{\text{J}}_{k} \left( \mu_{j}^{(k)} \; r \; a^{-1} \right) \, e^{i \, k \, \varphi},
\end{equation*}
where $\LARGE{\text{J}}_{k}$ is the Bessel function of the first kind of order $k$ and $\mu_{j}^{(k)}$ are the roots of the following transcendental equation 
\begin{eqnarray}\label{our equa}
\nonumber
k \LARGE{\text{J}}_{k}\left( \mu_{j}^{(k)} \right) + \frac{\mu_{j}^{(k)}}{2} \left(  \LARGE{\text{J}}_{k-1}\left( \mu_{j}^{(k)} \right) - \LARGE{\text{J}}_{k+1}\left( \mu_{j}^{(k)} \right)\right) &=& 0, \qquad k=1,2,\cdots \\ 
\LARGE{\text{J}}_{0}\left( \mu_{j}^{(0)} \right) - \mu_{j}^{(0)} \, \log(a) \, \left( \LARGE{\text{J}}_{-1}\left( \mu_{j}^{(0)} \right)-\LARGE{\text{J}}_{1}\left( \mu_{j}^{(0)} \right) \right) &=& 0.  
\end{eqnarray}
We remark that (only) for $k = 0$, the associated eigenfunctions have a non zero average\footnote{We can compute
$\underset{D}{\int} u_{0,j} = \int_{0}^{2\pi} \int_{0}^{a} u_{0,j}(r,\varphi) \, r \, dr \, d\varphi = 2 \pi \, a^{2}  \, \LARGE{\text{J}}_{1}\left(\mu_{j}^{(0)}\right) / \mu_{j}^{(0)}.$}.\\
Next, in order to obtain a precision about the behaviour of $\{ \lambda_{0,j} \}_{j \geq 1}$ with respect to $a$, we need to investigate the behaviour of $\mu_{j}^{(0)}$  solutions of $(\ref{our equa})$. For this, we use the following properties of Bessel functions 
\begin{equation*}
\LARGE{\text{J}}_{-1}(x)-\LARGE{\text{J}}_{1}(x) = 2 \LARGE{\text{J}}^{\prime}_{0}(x) = - 2 \LARGE{\text{J}}_{1}(x),
\end{equation*}
to write $(\ref{our equa})$ as
\begin{equation*}
\LARGE{\text{J}}_{0} \Big(\mu_{j}^{(0)}\Big)  + 2 \, \log(a) \, \mu_{j}^{(0)} \LARGE{\text{J}}_{1} \Big(\mu_{j}^{(0)} \Big) = 0.
\end{equation*}
Set $\Psi(x) := \LARGE{\text{J}}_{0} ( x )  + 2 \, \log(a) \, x \, \LARGE{\text{J}}_{1} ( x )$ and use \emph{Dixon's} theorem, see \cite{Watson} page 480, to deduce that the roots of $\Psi$ are interlaced with those of  $\LARGE{\text{J}}_{0}$, noted by $\{ x_{0,j} \}_{j \geq 1}$, and those of $\LARGE{\text{J}}_{1}$, noted by $\{ x_{1,j} \}_{j \geq 1}$. At this stage, we distinguish two cases 
\begin{itemize}
\item[$\star$] The roots of $\Psi$ exceeding $x_{0,1}$: 
\end{itemize}
For this case, a direct application of \emph{Dixon's} theorem, allows to deduce that
\begin{equation*}
\forall j \geq 2,  \; x_{k,j-1} < \mu_{j}^{(0)} < x_{k,j}, \quad k=0,1
\end{equation*}
and
\begin{equation*}
\forall j \geq 2, \;  a^{2} \, x^{-2}_{k,j} < \lambda_{0,j} < a^{2} \, x^{-2}_{k,j-1}, \quad k=0,1,
\end{equation*}
since $\big\lbrace x_{k,j} \big\rbrace_{j \geq 1 \atop k=0,1}$ are independent of \, $a$ \, we deduce that $\lambda_{0,j}$ behaves as \, $a^{2}$. 
\begin{itemize}
\item[$\star$] The root of $\Psi$ less than $x_{0,1}$:
\end{itemize}
The analysis of this case is more delicate. First, we observe that if, for a certain $x$, $\Psi(x) = 0$, then $\LARGE{\text{J}}_{0}(x) \neq 0$. Otherwise, we would have also $\LARGE{\text{J}}_{1}(x) = 0$ which is impossible as the zeros of $\LARGE{\text{J}}_{0}$ and $\LARGE{\text{J}}_{1}$ are disjoint, see \emph{Bourget's Hypothesis}, page 484, section 15.28 in \cite{Watson}. Hence the equation $\Psi(x) = 0$ can be rewritten as 
\begin{equation}\label{log=F0}
\frac{1}{2 \, \log(a)} = \frac{-x \, \LARGE{\text{J}}_{1} ( x )}{\LARGE{\text{J}}_{0} ( x )} := \textbf{F}_{0}(x).
\end{equation}
Clearly, $\textbf{F}_{0}$ is a smooth function on each interval not containing a zero of $\LARGE{\text{J}}_{0}$ and from \cite{landau}, see equation 27, we deduce that it is also a decreasing function, (see  figure  $\ref{F0}$, for a schematic picture).
\begin{figure}[H]
  \centering
  \begin{minipage}[b]{0.4\textwidth}
 \includegraphics[scale=0.4]{./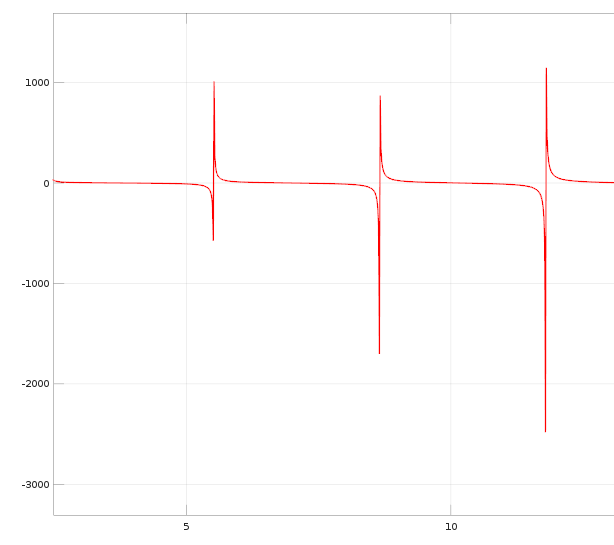}
    \caption{Graphic of $\textbf{F}_{0}$.}
    \label{F0}
  \end{minipage}
  \hfill
  \begin{minipage}[b]{0.5\textwidth}
 \includegraphics[scale=0.4]{./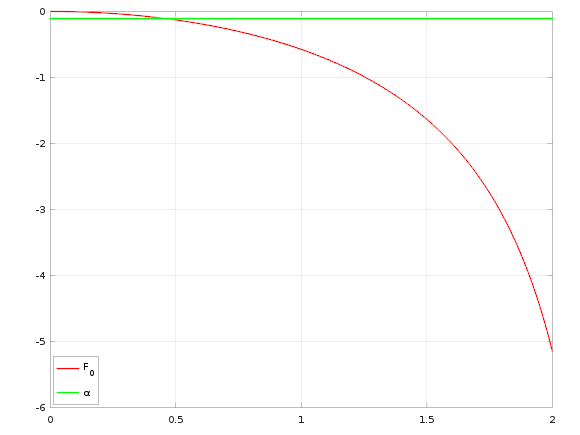}

    \caption{Solving, for $x \in (0,\nu)$,  $\textbf{F}_{0}(x)=1/(-4 \, \log(10))$.}
    \label{F0=alpha} 
  \end{minipage}
\end{figure}
 So, if we restrict our study to $(0 , \nu)$ with $\nu < x_{0,1} $ we deduce that $\textbf{F}_{0}^{-1}$ exists and is continuous, then the equation $(\ref{log=F0})$ is solvable and the solution that we obtain is also small, (see figure $(\ref{F0=alpha})$, for numerical demonstration).    \\ 
Now, since $x$ is small we use the asymptotic behaviour of $\textbf{F}_{0}$, see for instance (equation 25 in \cite{landau}), $\textbf{F}_{0}(x) \sim -x^{2}/2$ to write $(\ref{log=F0})$ as 
\begin{equation*}
\frac{1}{2 \, \log(a)} \sim \frac{-x^{2}}{2},
\end{equation*} 
and this implies that $ x \sim \Big( \log(1/a) \Big)^{\frac{-1}{2}}$. Finally 
\begin{equation}
\lambda_{0,1} \sim a^{2} \, \vert \log(a) \vert
\end{equation}
\item[b.2)] For the case of an arbitrary shape $D$, with  $\vert D \vert = \vert B_a \vert$ where $B_a$ is the disc of radius $a$, and referring to (Theorem 2.5, \cite{MRDS}) 
we have $\Vert LP_{D} \Vert \leq \Vert LP_{B_a} \Vert$. From the definition of $\Vert LP_{D} \Vert$, we write this inequality as a Faber-Krahn type inequality 
\begin{equation*}
\frac{1}{\lambda_{0,1}^2(D)} = \Vert LP_{\Omega} \Vert \leq \Vert LP_{D} \Vert = \frac{1}{\lambda_1^2(B_a)} \quad \text{or equivalently}  \quad \lambda_{1}(B_a) \geq \lambda_{0,1}(D).
\end{equation*}
We deduce the lower bound, and hence the behaviour, of the first eigenvalue 
\begin{equation}
\lambda_{0,1}(D) ~ \sim a^2  \vert \log(a)\vert, \quad \forall a <<1.
\end{equation}
In addition from $(\ref{T})$, we see that 
\begin{equation*}
\left( \int_{D} e_{1} \right)^{2} = \frac{\lambda_{0,1}}{a^{2} \, \vert \log(a) \vert} + \mathcal{O}\big(\vert \log(a) \vert^{-1}\big)
\end{equation*}
and hence 
\begin{equation}
\left( \int_{D} e_{1} \right)^{2} \sim 1 \;\; for \;\; 	a<<1.
\end{equation}
\end{enumerate}
\end{enumerate}

\end{document}